\newtheorem{assumption}{Assumption}%
\newtheorem{lemma}{Lemma}%
\theoremstyle{thmstyleone}%
\newtheorem{theorem}{Theorem}
\newtheorem{proposition}[theorem]{Proposition}%
\theoremstyle{thmstyletwo}%
\theoremstyle{thmstylethree}%
\newtheorem{definition}{Definition}%
\begin{document}

\title[Article Title]{On Wasserstein Distributionally Robust Mean Semi-Absolute Deviation Portfolio Model: Robust Selection and Efficient Computation}


\author[1]{\fnm{Weimi} \sur{Zhou}}\email{wmzhou1997@163.com}

\author*[2]{\fnm{Yong-Jin} \sur{Liu}}\email{yjliu@fzu.edu.cn}
\equalcont{These authors contributed equally to this work.}

\affil[1]{\orgdiv{School of Mathematics and Statistics}, \orgname{Fuzhou University}, \orgaddress{\street{No. 2 Wulongjiang North Avenue}, \city{Fuzhou}, \postcode{350108}, \state{Fujian}, \country{China}}}

\affil*[2]{\orgdiv{Center for Applied Mathematics of Fujian Province, School of Mathematics and Statistics}, \orgname{Fuzhou University}, \orgaddress{\street{No. 2 Wulongjiang North Avenue}, \city{Fuzhou}, \postcode{350108}, \state{Fujian}, \country{China}}}


\abstract{This paper focuses on the Wasserstein distributionally robust mean-lower semi-absolute deviation (DR-MLSAD) model, where the ambiguity set is a Wasserstein ball centered on the empirical distribution of the training sample.  This model can be equivalently transformed into a convex  problem. We develop a robust Wasserstein profile  inference (RWPI) approach to determine the size of the Wasserstein radius for DR-MLSAD model. We also design an efficient proximal point dual semismooth Newton ({\sc PpdSsn}) algorithm  for the reformulated equivalent model. In numerical experiments, we compare the DR-MLSAD  model with the radius selected by the RWPI approach to the DR-MLSAD model with the radius selected by cross-validation, the sample average approximation (SAA) of the MLSAD model, and the $1/N$ strategy on  the real market datasets. Numerical results show that our model has better out-of-sample performance in most cases. Furthermore, we compare {\sc PpdSsn} algorithm with  first-order algorithms and Gurobi solver on  random data. Numerical results verify the effectiveness of {\sc PpdSsn} in solving large-scale DR-MLSAD problems.}

\keywords{Mean-lower semi-absolute deviation model, Wasserstein distributionally robust optimization, robust Wasserstein profile inference,  proximal point algorithm, semismooth Newton method}



\maketitle

\section{Introduction}\label{Intro}

In modern portfolio, mean-variance model proposed by Markowitz \cite{Mark.1952} is a popular portfolio model to reflect the optimal trade-off between returns and risks. Due to the difficulty in calculating large-scale covariance matrix, more attention has been paid to the application of linear programming in portfolio. Konno and Yamazak \cite{Kon.Yama.1991}  proposed a simpler linear portfolio selection model called mean-absolute deviation (MAD) model, which reflects the deviation of portfolio returns below expected value and above expected value. Compared with mean-variance model, MAD model has less computational difficulty in dealing with large-scale portfolio selection. However, in the real financial investment market, rational investors are more concerned about the downside loss risk \cite{JR.ZF.2006}. Speranza \cite{Spera.1993} first proposed the mean-lower semi-absolute deviation (MLSAD) model, which minimizes the downside risk of an investment at an acceptable level of expected return. Assume that there are $m$ assets, random vector $\xi\in\Xi$ represents the asset returns in a certain period, then the precise formulation of the MLSAD model is 
\begin{equation}\label{MLSAD}
	\begin{aligned}
		&\min_{x\in\mathcal{X}}\ \mathbb{E}_{\mathbb{P}^*}|\min(0,\xi^{\top}x-\mathbb{E}_{\mathbb{P}^*}(\xi^{\top}x))|\\
		&\ {\rm{s.t.}}\quad \mathbb{E}_{\mathbb{P}^*}(\xi^{\top}x)=\bar{\rho}, 
	\end{aligned}
\end{equation}
where $\mathcal{X}=\{x|e^{\top}x=1,x\geq0\}$, ${\mathbb{P}^*}$ denotes the true probability distribution of the random vector $\xi$, $\mathbb{E}_{\mathbb{P}^*}$ denotes the expectation under distribution ${\mathbb{P}^*}$, $\bar{\rho}$ is the target expected return of the portfolio, and $e$ is the column vector of all ones. It is well known that the minimization of MLSAD is equivalent to the minimization of MAD. MLSAD model can not only inherits the computational advantages of MAD model, but also reflect investors' aversion to downside loss risk. Moreover, MLSAD model has fewer constraints than MAD model.

Since the true distribution $\mathbb{P}^*$ of asset return is unknown, MLSAD model is usually accompanied by uncertainty.  
Distributionally robust optimization (DRO) \cite{Scarf.1958} was proposed as an attractive modeling technique to deal with this uncertainty. Actually, this modeling technique has been widely used in the portfolio model \cite{BJ.Chen.Zhou.2022,Chen.Wu.Li.Ding.Chen.2022,Liu.Yang.Yu.2021,Liu.Chen.A.Xu.2019}. The main idea of distributionally robust optimization  is to find a decision that minimize the  worst-case expected cost over an ambiguity set consisting of distribution information about uncertain data. DRO takes distribution information into account and makes better use of available data, which makes it less conservative than robust optimization  \cite{RH.MS.2019}. The construction of ambiguity set is a key part of distributionally robust optimization model. Distributionally robust optimization with Wasserstein ambiguity set  has received a lot of attention in recent years. As far as we know, Mohajerin Esfahani and Kuhn \cite{EPM.KD.2018} first discovered the convex reformulation  of Wasserstein distributionally robust  optimization model and demonstrated some desirable properties of Wasserstein DRO. Since then, it has been widely used in the field of machine learning and portfolio such as logistic regression \cite{SAS.EPM.KD.2015}, support vector machine \cite{LC.MS.2015,SAS.KD.EPM.2019}, mean-variance model \cite{BJ.Chen.Zhou.2022}, mean-absolute deviation model \cite{Chen.Wu.Li.Ding.Chen.2022}.

The size of  ambiguity set is also a crucial parameter for the performance of  DRO model. A ambiguity set with a suitable size should contain the true distribution with  high confidence, while excluding some pathological distributions. Mohajerin  Esfahani and Kuhn \cite{EPM.KD.2018} presented some methods for selecting the radius of the Wasserstein ball, which includes the $k$-fold cross validation that is popular in practice. $k$-fold cross validation requires solving $k$ optimization problems, we usually consider $k=5$ or $10$. But Shao \cite{SJ.1993} gave a doubt as to whether this choice of $k$ is always appropriate with respect to the sample size $N$.  Blanchet et al.  \cite{BJ.KY.MK.2016} introduced a new projection-based statistical inference method, called {\emph{Robust Wasserstein Profile Inference}, to optimally select the size of  uncertain regions. They also presented the application of this method  to machine learning such as  support vector machines, regularized logistic regression and square-root Lasso. Cisneros-Velarde et al. \cite{CVP.PA.OSY.2020} proposed a  robust selection algorithm following the robust Wasserstein profile inference method to select regularization parameters for DRO graphical Lasso. Hai and Nam \cite{HX.NK.2023} utilized robust Wasserstein profile inference  method to select the appropriate Wasserstein ball radius for distributionally robust mean-CVaR model. However, since the objective function is not differentiable, it is not feasible to apply this method directly to the mean-CVaR model. Then the authors \cite{HX.NK.2023} used the modified method introduced by Peng and Lin  \cite{JMP.ZL.1999} to deal with the objective function.
	
	In this paper, we are mainly concerned with the choice of ambiguity set size for the Wasserstein distributionally  robust mean lower semi-absolute deviation model and its efficient computation. Chen et al. \cite{Chen.Wu.Li.Ding.Chen.2022} have studied the distributionally robust mean absolute deviation model based on the Wasserstein metric (DR-MAD). Although minimizing the MAD model and minimizing the MLSAD model are equivalent, we find that the problem form of the DR-MLSAD model reformulated is simpler than that of the DR-MAD model, which is beneficial for us to analyze the properties of the model and design the algorithm.  Different from the work of Chen et al. \cite{Chen.Wu.Li.Ding.Chen.2022}, we develop the robust method to estimate the size of ambiguity set and  design efficient algorithms for the reformulated problem. Since the objective function in DR-MLSAD model is not differentiable, it is unreasonable to estimate the size of ambiguity set of DR-MLSAD model by using Robust Wasserstein Profile Inference method directly. In view of the work of Hai and Nam \cite{HX.NK.2023}, we utilize a smooth function to approximate the objective function, then we can  derive the size of the ambiguity set by analyzing the properties of the smooth function and the related results of the Robust Wasserstein Profile Inference method.  At the same time, we rewrite the reformulated problem equivalently as an unconstrained optimization problem whose objective function consists of the $\ell_1$-norm function, the indicator function of the polyhedral convex set, and a smooth function. Inspired by a series of pioneering work \cite{XL.DE.KCT.2020,LXD.SDF.2020,LST.2018,ML.DS.KCT.2019,FLX.2021}, we find  high efficiency of semismooth Newton augmented Lagrange algorithm \cite{XL.DE.KCT.2020,LXD.SDF.2020,LST.2018} and semismooth Newton proximal point algorithm \cite{ML.DS.KCT.2019,FLX.2021} in dealing with such problems. This motivates us to design this type of second-order algorithm for the reformulated problem.

	We summarize our main contributions in this paper as follows.
	\begin{itemize}
		\item [1.] We consider a  distributionally robust mean-lower semi-absolute deviation portfolio model under Wasserstein ambiguity set, which can be  reformulated as an equivalent computationally tractable problem. The size of Wasserstein ambiguity set affects the performance of the model. We adopt and modify the {\emph{Robust Wasserstein profile inference}} (RWPI) approach to select the appropriate radius for the DR-MLSAD model. Furthermore, we  compare it with the popular radius selection strategy.
		
		\item [2.] We develop an efficient proximal point dual semismooth Newton algorithm for the convex reformulation of the problem, denoted as {\sc PpdSsn}. The algorithm has been  proved to have global and local convergence under mild conditions. We apply the semismooth Newton method to solve the subproblem in proximal point algorithm and reduce the computational cost by fully exploring the second-order special structure of the problem. 
		
		\item [3.] We  conduct numerical experiments to verify the performance of our proposed model and the efficiency of the {\sc PpdSsn} algorithm. We evaluate the out-of-sample performance of our proposed model by some evaluation criteria and compare it with the  DR-MLSAD model with the radius selected by cross-validation, $1/N$ strategy and SAA model. We also verify the  good performance of {\sc PpdSsn} algorithm by comparing it with first-order algorithms pADMM and dADMM for large-scale problems.
	\end{itemize}
	
	The remaining parts of the paper are organized as follows.  Section \ref{sec:2} is devoted to the construction and reformulation of the Wasserstein  distributionally  robust mean-lower semi-absolute deviation model. In Section \ref{sec:3},  we develop the RWPI approach to find the appropriate Wasserstein radius. We design an efficient algorithm for large-scale convex reformulation optimization problems in Section \ref{sec:4}. Section \ref{sec:5} is dedicated to presenting the out-of-sample performance of our proposed model  and  the performance of {\sc PpdSsn} algorithm on numerical experiments. We give the conclusions in Section \ref{sec:6}.

	{\bf{Notation and preliminaries:}} The following notations and terminologies are used throughout the paper. For given positive integer $m$, we denote $I_{m}$  as the $m\times m$ identity matrix. For given $x\in\mathbb{R}^m$, $``|x|"$ is the absolute vector whose $i$-th entry is $|x_i|$,  $``{\rm{sign}}(x)$" is the sign vector whose $i$-th entry is $1$ if $x_i>0$, $-1$ if $x_i<0$, and $0$ otherwise. For any self-adjoint positive semidefinite linear operator $\mathcal{M}:\mathbb{R}^m\to\mathbb{R}^m$, we define $\left\langle x,x'\right\rangle_{\mathcal{M}}:=\left\langle x,\mathcal{M}x'\right\rangle$, and $\|x\|_{\mathcal{M}}:=\sqrt{\left\langle x,x\right\rangle_{\mathcal{M}}}$ for all $x,x'\in\mathbb{R}^m$. The largest and smallest eigenvalues  of $\mathcal{M}$ are denoted as $\lambda_{\max}(\mathcal{M})$ and $\lambda_{\min}(\mathcal{M})$, respectively. Denote $``{\rm{Diag}}(x)"$ as the diagonal matrix whose diagonal is given by vector $x$.  For given subset $\mathcal{C}\subseteq\mathbb{R}^n$, we define the weighted distance of ${{x}}\in\mathbb{R}^n$  to  $\mathcal{C}$ by ${\rm{dist} }_{\mathcal{M}}({{x}},\mathcal{C}):= \inf_{{{x}}'\in\mathcal{C}}\|{{x}}-{{x}}'\|_{\mathcal{M}}$. We use $``;"$ for adjoining vectors in a column.  The $\ell_{\infty}$-norm unit  ball is defined by $\mathcal{B}_{\infty}:=\{{\bf{x}}\in\mathbb{R}^n \mid \|{\bf{x}}\|_{\infty}\leq 1 \}$. The symbol $``\stackrel{D}{\longrightarrow}"$ is used to denote convergence in distribution. $e$ denotes the vector of all ones.
	
	Let $f:\mathbb{R}^m\to \left( -\infty,+\infty\right]$ be a closed and proper convex function.  For given $t>0$, the Moreau-Yosida regularization and the proximal mapping of $f$ at $x$ is defined as 
	\begin{equation*}
		\begin{aligned}
			E_f^{t}({{x}}) & :=\min\limits_{{{y}}\in \mathbb{R}^m}\{f({{y}})+\frac{1}{2t}\|{{y}}-{{x}}\|^2\},\ \forall {{x}}\in \mathbb{R}^m,\\ {\rm{Prox}}_f({{x}}) & :=\mathop{\arg\min}\limits_{{{y}}\in \mathbb{R}^m}\{f({{y}})+\frac{1}{2}\|{{y}}-{{x}}\|^2\},\ \forall {{x}}\in \mathbb{R}^m.
		\end{aligned}
	\end{equation*}
	As studied in \cite{MJJ.1965}, $E_f^t(\cdot)$ is convex, continuously differentiable and its gradient at ${{x}}\in\mathcal{X}$ is $\nabla E_f^t({{x}})=\left( {{x}}-{\rm{Prox}}_{tf}({{x}})\right)/t$. Moreover, it is easy to know from \cite{LC.SC.1997} that $\nabla E_f^t(\cdot)$ and ${\rm{Prox}}_f(\cdot)$ are globally Lipschitz continuous with modulus $1$. To be specific, for given $t>0$, the proximal mapping of $\ell_1$-norm is given by 
	$$
	{\rm{Prox}}_{t\|\cdot\|_1}({{x}})={\rm{sgn}}({{x}})\circ\max\{|{{x}}|-te,{{0}}\},\ \forall {{x}}\in \mathbb{R}^m.
	$$
	If $f$ is the indicator function of a closed convex set $\mathcal{C}$, denoted as $\chi_{\mathcal{C}}$, then proximal mapping of $\chi_{C}(\cdot)$ reduce to the projection $\Pi_{\mathcal{C}}(\cdot)$ onto the set $\mathcal{C}$:
	$$
	\Pi_{\mathcal{C}}(x):=\mathop{\arg\min}\limits_{{{y}}\in \mathcal{C}}\{\frac{1}{2}\|{{y}}-{{x}}\|^2\},\ \forall {{x}}\in \mathbb{R}^m.
	$$
	Moreover, for given $t>0$, let $\mathcal{D}=t\mathcal{B}_{\infty}=\{u\in\mathbb{R}^m\mid \|u\|_{\infty}\leq t\},$ it holds 
	$$
	{\rm{Prox}}_{\chi_{\mathcal{D}}}(x)=\Pi_{t\mathcal{B}_{\infty}}(x) = {\rm{sign}}(x)\circ\min\{|x|,te\}, \forall x\in\mathbb{R}^m.
	$$

	\section{DRO formulation and computational tractability}\label{sec:2}
In this section, we review some preliminaries on the Wasserstein metric and construct distributionally robust mean-lower semi-absolute deviation (DR-MLSAD) model with Wasserstein ambiguity set. Since the DR-MLSAD model is  complex and difficult to solve directly, we should adopt some techniques to reformulate it into a tractable formulation.

Assume that $(\Xi,\|\cdot\|)$ is a separable complete metric space. Let $\mathcal{M}(\Xi)$  and $\mathcal{M}(\Xi\times\Xi)$ be the set of all Borel probability measures  supported on $\Xi$ and $\Xi\times\Xi$, respectively. The definition of the Wasserstein metric is given as follows.
\begin{definition}
	The Wasserstein metric \cite{KL.RG.1958} $d_{W}:\mathcal{M}(\Xi)\times\mathcal{M}(\Xi)\to\mathbb{R}_{+}$ is defined by
	$$
	\begin{aligned}
	&d_{W}(\mathbb{P}_1,\mathbb{P}_2)\\
	&:=\inf\left\{\int_{\Xi\times\Xi}\|\xi_1-\xi_2\|_p\mathbb{Q}(d\xi_1,d\xi_2):\mathbb{Q}(d\xi_1,\Xi) = \mathbb{P}_1(d\xi_1), \mathbb{Q}(\Xi,d\xi_2) = \mathbb{P}_2(d\xi_2)\right\},
	\end{aligned}
$$
	where $\mathbb{Q}\in\mathcal{M}(\Xi\times\Xi)$ is the joint distribution of $\mathbb{P}_1\in\mathcal{M}(\Xi)$ and $\mathbb{P}_2\in\mathcal{M}(\Xi)$, and  $\|\cdot\|_p$ denotes the $p$-norm with  $p\in\left[ 1,+\infty\right] $.
\end{definition}
For ensuring that $d_{W} $ is a real distance and a metric on $\mathcal{M}(\Xi)$, we give the following assumption \cite{AL.GN.2013,WZ.YK.SS.ZY.2020}.
\begin{assumption}\label{Assum1}
	For any distribution $\mathbb{P}\in\mathcal{M}(\Xi)$, it holds that
	$$
	\mathbb{E}_{\mathbb{P}}\left[ \|\xi\|_p\right] =\int_{\Xi}\|\xi\|_p  \ \mathbb{P}(d\xi)< \infty.
	$$
\end{assumption}
As stated in \cite{WZ.YK.SS.ZY.2020}, Assumption \ref{Assum1}  does not sacrifice much modeling power. We give the  definition of Wasserstein ambiguity set below:
\begin{equation}\label{wass}
	\mathcal{U}_{\epsilon}(\widehat{\mathbb{P}}_N)=\left\lbrace \mathbb{P}: \mathbb{P}\left\lbrace \xi\in\mathcal{M}(\Xi)\right\rbrace =1,d_{W}(\mathbb{P},\widehat{\mathbb{P}}_N)\leq\epsilon\right\rbrace,
\end{equation}
where $\widehat{\mathbb{P}}_N$ is the discrete empirical probability distribution, that is,
$$
\widehat{\mathbb{P}}_N=\frac{1}{N}\sum_{i=1}^{N}\delta_{\hat{\xi}_i},
$$
where $\{\hat{\xi}_1,\hat{\xi}_2,\dots,\hat{\xi}_N\}$ is a set of the independent observations of $\xi$ and  $\delta_{\hat{\xi}_i}$ is the Dirac point measure at ${\hat{\xi}_i}$. In fact, the Wasserstein  ambiguity set can be viewed as a Wasserstein ball with $\widehat{\mathbb{P}}_N$ as the center and $\epsilon\geq0$ as the radius. We want to characterize the mean-lower semi-absolute deviation model with the Wasserstein distributionally robust formulation  that minimizes the worst-case semi-absolute deviation while the worst-case returns exceeds a given target value $\rho$, that is,
\begin{equation}\label{DRMLSAD}
	\begin{aligned}
		&\min_{x\in\mathcal{X}}\max_{\mathbb{P}\in\mathcal{U}_{\epsilon}(\widehat{\mathbb{P}}_N)}\ \mathbb{E}_{\mathbb{P}}|\min(0,\xi^{\top}x-\mathbb{E}_{\mathbb{P}}(\xi^{\top}x))|\\
		&\ {\rm{s.t.}} \min_{\mathbb{P}\in\mathcal{U}_{\epsilon}(\widehat{\mathbb{P}}_N)}\ \mathbb{E}_{\mathbb{P}}(\xi^{\top}x)\geq \rho,
	\end{aligned}
\end{equation}
where $\rho$ can be considered as the lowest acceptable target return.  We  take target return $\bar{\rho}$ and the size of the uncertainty set $\epsilon$ into account when we choose $\rho$. It is more appropriate to select $\rho<\bar{\rho}$ such that $\bar{\rho}-\rho$ is informed by $\epsilon$.
In particular, for $\epsilon=0$, the ambiguity set reduces to the singleton $\{\widehat{\mathbb{P}}_N\}$, then problem \eqref{DRMLSAD} reduces to the following problem:
\begin{equation}\label{saa}
	\begin{aligned}
		&\min_{x\in\mathcal{X}}\ \mathbb{E}_{\widehat{\mathbb{P}}_N}\left[ \max(0,\mathbb{E}_{\widehat{\mathbb{P}}_N}(\xi^{\top}x)-\xi^{\top}x)\right] \\
		&\ {\rm{s.t.}}\ \  \mathbb{E}_{\widehat{\mathbb{P}}_N}(\xi^{\top}x)\geq{\rho}.
	\end{aligned}
\end{equation}
In this paper, we regard problem \eqref{saa} as a sample average approximation of problem \eqref{MLSAD}, called SAA problem.

We reformulate the DR-MLSAD model \eqref{DRMLSAD} in a computationally tractable way.  
Denote the  feasible region as $\mathcal{F}_{\epsilon,\rho} =\{x\in\mathbb{R}^m\mid e^{\top}x=1,x\geq0,\min_{\mathbb{P}\in\mathcal{U}_{\epsilon}(\widehat{\mathbb{P}}_N)}\ \mathbb{E}_{\mathbb{P}}(\xi^{\top}x)\geq \rho\}.$ Then the problem \eqref{DRMLSAD} can be rewritten as
$$
\min_{x\in\mathcal{F}_{\epsilon,\rho}}\max_{\mathbb{P}\in\mathcal{U}_{\epsilon}(\widehat{\mathbb{P}}_N)}\ \mathbb{E}_{\mathbb{P}}\left[ \max(0,\mathbb{E}_{\mathbb{P}}(\xi^{\top}x)-\xi^{\top}x)\right] .
$$
By virtue of  the formulation of model \eqref{DRMLSAD}, we find that model \eqref{DRMLSAD} contains an infinite number of constraints due to the infinite number of elements in the Wassertstein ambiguity set $\mathcal{U}_{\epsilon}(\widehat{\mathbb{P}}_N)$, which leads to the model being intractable. Moreover, it is clear to see that  problem \eqref{DRMLSAD} is non-convex. Fortunately, referring to the seminal work \cite{EPM.KD.2018,Chen.Wu.Li.Ding.Chen.2022,BJ.Chen.Zhou.2022}, we can reformulate the model \eqref{DRMLSAD} as a finite convex optimization problem. By fixing $\mathbb{E}_\mathbb{P}(\xi^{\top}x)=\alpha\geq\rho$ in the objective function of problem \eqref{DRMLSAD}, we obtain the following equivalent form:
\begin{equation}\label{EDRMLSAD}
	\min_{ x\in\mathcal{F}_{\epsilon,\rho} }\left\{\max_{\alpha\geq \rho}\left[\max_{\mathbb{P}\in\mathcal{U}_{\epsilon}(\widehat{\mathbb{P}}_N),\mathbb{E}_{\mathbb{P}}(\xi^{\top}x)=\alpha}\ \mathbb{E}_{\mathbb{P}}\left[ \max(0,\alpha-\xi^{\top}x)\right]  \right]\right\}.
\end{equation}
It is obvious that problem \eqref{EDRMLSAD} is a {\emph{min-max-max}} problem. Now, we describe the process of reformulating problem \eqref{EDRMLSAD}:
First of all, the inner maximization problem is over a collection of probability measures, which is computationally intractable. We reformulate it as a computationally tractable problem by some duality theory and obtain its optimal value. Next, we take the optimal value of the inner maximization problem into the second-stage maximization problem in problem \eqref{EDRMLSAD}. The variable in the second-stage maximization problem is one-dimensional,  we can also easily find the optimal value of the second-stage problem. Finally, we plug the optimal value from the second-stage into the outer minimization problem. In addition, the feasible region $\mathcal{F}_{\epsilon,\rho}$ contains an infinite dimensional optimization problem, thus we also transform the feasible region of the outer minimization problem into a tractable formulation. The relevant result is given in the following theorem.
\begin{theorem}\label{thm2}
	Suppose $\|\cdot\|_p=\|\cdot\|_{\infty}$. Considering the uncertain set $\Xi=\mathbb{R}^m$. Then for any $\epsilon\geq0$, DR-MLSAD model is equivalent to the following problem:
	\begin{equation}\label{DR2}
		\begin{aligned}
			&\min_{x\in\mathcal{X}}\ \frac{1}{2N}\sum_{i=1}^{N}|(\hat{\mu}-\hat{\xi}_i)^{\top}x-\epsilon|+\frac{1}{2N}\sum_{i=1}^{N}((\hat{\mu}-\hat{\xi}_i)^{\top}x-\epsilon)+\epsilon\\
			\ & \ {\rm{s.t.}} \quad \hat{\mu}^{\top}x\geq \rho+\epsilon,
		\end{aligned}
	\end{equation}
	where $\hat{\mu}=\frac{1}{N}\sum_{i=1}^{N}\hat{\xi}_i$.
\end{theorem}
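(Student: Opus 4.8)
The plan is to dispatch the \emph{min-max-max} problem \eqref{EDRMLSAD} in three stages, following the Wasserstein duality programme of \cite{EPM.KD.2018}: first evaluate the innermost worst-case expectation in closed form for a fixed decision $x\in\mathcal{X}$ and a fixed reference mean $\alpha$, then carry out the scalar maximization over $\alpha\ge\rho$, and finally reformulate the robust feasibility set $\mathcal{F}_{\epsilon,\rho}$. The outer minimization over $x$ is carried along untouched, so it suffices to treat the inner value function for each fixed $x$, on which $\|x\|_1=e^{\top}x=1$.

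For the inner problem I fix $x,\alpha$ and consider the worst-case expectation over $\mathcal{U}_{\epsilon}(\widehat{\mathbb{P}}_N)$ subject to $\mathbb{E}_{\mathbb{P}}(\xi^{\top}x)=\alpha$. Since $\widehat{\mathbb{P}}_N$ is supported on the $N$ atoms $\hat{\xi}_i$, the transport-plan description of the Wasserstein ball lets me disintegrate $\mathbb{P}$ into conditional measures at each atom, turning the problem into a generalized moment problem that is linear in those measures. Dualizing the Wasserstein budget with a multiplier $\lambda\ge0$ and the mean constraint with a free multiplier $\eta$, strong duality reduces it to
\begin{equation*}
\inf_{\lambda\ge0,\ \eta}\Big\{\lambda\epsilon+\eta\alpha+\tfrac1N\sum_{i=1}^{N}\sup_{\xi\in\mathbb{R}^m}\big[\max(0,\alpha-\xi^{\top}x)-\eta\,\xi^{\top}x-\lambda\|\xi-\hat{\xi}_i\|_{\infty}\big]\Big\}.
\end{equation*}
Writing $\max(0,\alpha-\xi^{\top}x)$ as the pointwise maximum of the affine maps $0$ and $\alpha-\xi^{\top}x$, each inner supremum splits into two support-function evaluations; because the dual norm of $\|\cdot\|_{\infty}$ is $\|\cdot\|_1$ and $\|x\|_1=1$, finiteness forces $|\eta|\le\lambda$ and $|1+\eta|\le\lambda$, and on that region the supremum equals $-\eta\,\hat{\xi}_i^{\top}x+\max(0,\alpha-\hat{\xi}_i^{\top}x)$. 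This identity decouples $\eta$ from the sample-dependent part and collapses the inner value to
\begin{equation*}
v(\alpha)=\tfrac1N\sum_{i=1}^{N}\max(0,\alpha-\hat{\xi}_i^{\top}x)+\inf_{\eta}\big[\epsilon\max\{|\eta|,|1+\eta|\}+\eta(\alpha-\hat{\mu}^{\top}x)\big],
\end{equation*}
where the residual scalar infimum is a convex piecewise-linear program.

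For the second stage I substitute $v(\alpha)$ into $\max_{\alpha\ge\rho}$. The attainable reference means are exactly $[\hat{\mu}^{\top}x-\epsilon,\ \hat{\mu}^{\top}x+\epsilon]$ (outside this interval the inner set is empty and $v=-\infty$), and on it the residual infimum evaluates to $\tfrac12(\epsilon-(\alpha-\hat{\mu}^{\top}x))$, so that $v$ is convex and piecewise-linear in $\alpha$; its maximum over the interval is therefore attained at an endpoint. The claim to establish is that the \emph{lower} endpoint $\alpha=\hat{\mu}^{\top}x-\epsilon$ is optimal: substituting it yields $\tfrac1N\sum_{i=1}^{N}\max(0,(\hat{\mu}-\hat{\xi}_i)^{\top}x-\epsilon)+\epsilon$, which coincides with the objective of \eqref{DR2} after using $\tfrac12(|t|+t)=\max(0,t)$. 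In parallel, the robust return constraint $\min_{\mathbb{P}\in\mathcal{U}_{\epsilon}(\widehat{\mathbb{P}}_N)}\mathbb{E}_{\mathbb{P}}(\xi^{\top}x)\ge\rho$ is handled by the same duality applied to the single linear integrand $-\xi^{\top}x$: its worst case equals $\hat{\mu}^{\top}x-\epsilon$ (the multiplier is forced to $\lambda=\|x\|_1=1$), giving the tractable constraint $\hat{\mu}^{\top}x\ge\rho+\epsilon$.

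The main obstacle is precisely this endpoint selection in the outer maximization over $\alpha$. The free reference mean $\alpha$ is coupled to the adversary's distribution, and this coupling is exactly what renders \eqref{DRMLSAD} non-convex; since $v$ turns out convex in $\alpha$, the worst case migrates to an endpoint of the attainable mean-interval, and the entire reformulation stands or falls on showing that this endpoint is the lower one $\hat{\mu}^{\top}x-\epsilon$ rather than the upper one. I would concentrate the effort there, and treat the verification of a Slater-type constraint qualification — ensuring the inner strong duality holds with no gap for every feasible pair $(x,\alpha)$ — as a necessary but routine preliminary.
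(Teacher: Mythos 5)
Your stage~1 and stage~3 are sound and essentially reproduce the paper's route in a more compact form: your multipliers $(\eta,\lambda)$ are the paper's $(\gamma_1,\gamma_2)$ in \eqref{dual}--\eqref{conic}, the constraint reformulation $\hat{\mu}^{\top}x\ge\rho+\epsilon$ is Proposition \ref{fesi} verbatim, and your closed form
\begin{equation*}
v(\alpha)=\frac{1}{N}\sum_{i=1}^{N}\max\bigl(0,\alpha-\hat{\xi}_i^{\top}x\bigr)+\frac{1}{2}\bigl(\epsilon-(\alpha-\hat{\mu}^{\top}x)\bigr)
\end{equation*}
agrees with what one obtains by solving the paper's dual \eqref{3.7} exactly (maximize $s_2=\upsilon_2^{\top}x$ subject to $s_1+s_2=-(\alpha-\hat{\mu}^{\top}x)$, $|s_j|\le\varpi_j$, $\varpi_1+\varpi_2\le\epsilon$, which yields $\tfrac{1}{2}(\epsilon-\beta)$ rather than the constant $\epsilon$). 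So the first two thirds of your argument are correct, and arguably sharper than the paper's.

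The genuine gap is precisely the step you defer: you assert that the maximum of $v$ over the attainable interval $[\hat{\mu}^{\top}x-\epsilon,\ \hat{\mu}^{\top}x+\epsilon]$ sits at the \emph{lower} endpoint, announce that the whole reformulation ``stands or falls'' on this, and then do not prove it. Worse, with your own (correct) formula for $v$ the claim fails for generic data. Take $N=3$ with $\hat{\xi}_1^{\top}x=\hat{\mu}^{\top}x+2\epsilon$ and $\hat{\xi}_2^{\top}x=\hat{\xi}_3^{\top}x=\hat{\mu}^{\top}x-\epsilon$ (consistent with $\hat{\mu}$ being the sample mean). At $\alpha_-=\hat{\mu}^{\top}x-\epsilon$ the sum of positive parts vanishes and $v(\alpha_-)=\epsilon$; at $\alpha_+=\hat{\mu}^{\top}x+\epsilon$ the residual term vanishes and $v(\alpha_+)=\tfrac{1}{3}(0+2\epsilon+2\epsilon)=\tfrac{4}{3}\epsilon>v(\alpha_-)$, while $\alpha_+\ge\rho$ is automatic whenever the problem is feasible. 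So the convexity-plus-endpoint argument, by itself, delivers $\max\{v(\alpha_-),v(\alpha_+)\}$ and cannot land on the objective of \eqref{DR2}. Note also that the paper does not argue this step the way you propose: it works at the level of the dual \eqref{3.7} and claims that $(\varpi_1^*,\varpi_2^*,\upsilon_1^*,\upsilon_2^*)=(0,\epsilon,0,\epsilon e)$ is the \emph{unique} optimal solution, which forces $\hat{\mu}^{\top}x-\alpha=\epsilon$ and hence pins $\alpha$ to the lower endpoint --- a route your finer evaluation of $v(\alpha)$ does not support either, since for $\alpha$ in the interior of the interval \eqref{3.7} is feasible with a different optimizer. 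You would need either an additional structural argument showing why $\alpha_-$ dominates, or a restriction on the data/parameters under which it does; as written, the decisive step is missing and the natural way to fill it fails.
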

\begin{proof}
See Appendix \ref{App1}.
\end{proof}


\section{Choice of ambiguity radius $\epsilon$}\label{sec:3}
In this section, we focus on the choice of ambiguity radius $\epsilon$ such that the DR-MLSAD model has some desirable statistical properties.  We adopt a novel approach,  \emph{Robust Wasserstein Profile Inference} (RWPI) proposed by Blanchet et al. \cite{BJ.KY.MK.2016}, to estimate  the suitable  radius. 

Recalling problem \eqref{MLSAD}: 
\begin{equation}\label{primalpp}
	\min_{x\in\mathbb{R}^m}\ \mathbb{E}_{\mathbb{P}^*}\left\lbrace \max(0,\mathbb{E}_{\mathbb{P}^*}(\xi^{\top}x)-\xi^{\top}x)\right\rbrace \quad
	{\rm{s.t.}}\ \  \mathbb{E}_{\mathbb{P}^*}(\xi^{\top}x)=\bar{\rho}, e^{\top}x = 1,x\geq0,
\end{equation} 
we find that the objective function is not differentiable. Therefore, it is not appropriate to directly apply the RWPI approach to estimate the radius. Before adopting the RWPI method, we first find a smooth function to approximate the objective function $\max(0,\bar{\rho}-\xi^{\top}x)$, then we use this smooth function to carry out the later analysis. Referring to the work  \cite{JMP.ZL.1999, LXS.1991}, we approximate the \emph{max} function by the following smooth function:
$$
f_t(x) = t\ln\left( 1+\exp\left( \frac{\bar{\rho}-\xi^{\top}x}{t}\right)\right) .
$$
This function  is continuously differentiable with respect to $x$ and  $f_t(x)\to\max(0,\bar{\rho}-\xi^{\top}x)$ as $t\to 0$. Thus, we consider the following  problem
\begin{equation}\label{smooth1}
	\min_{x\in\mathbb{R}^m}\ \mathbb{E}_{\mathbb{P}^*}\left\lbrace t\ln\left( 1+\exp\left( \frac{\bar{\rho}-\xi^{\top}x}{t}\right)\right) \right\rbrace \quad
	{\rm{s.t.}}\ \ \mathbb{E}_{\mathbb{P}^*}(\xi^{\top}x)=\bar{\rho}, e^{\top}x=1,x\geq0.
\end{equation}
The Lagrangian function of problem \eqref{smooth1} takes the following form:
\begin{equation}\nonumber
	\mathcal{L}(x;\lambda)=\mathbb{E}_{\mathbb{P}^*} \left[ t\ln\left( 1+\exp\left( \frac{\bar{\rho}-\xi^{\top}x}{t}\right)\right)\right]  +\lambda_1(\bar{\rho}-\mathbb{E}_{\mathbb{P}^*}(\xi^{\top}x))+\lambda_2(1-e^{\top}x)-\left\langle \lambda_3,x\right\rangle,
\end{equation}
where $\lambda_1,\lambda_2\in\mathbb{R},\lambda_3\in\mathbb{R}^m$ are Lagrange multipliers.
Given  target value $\bar{\rho}$, we assume that  problem \eqref{smooth1} has a unique solution, denoted by $x^*.$ 
Thus, there exists Lagrange multiplier $\left( \lambda_1^*,\lambda_2^*,\lambda_3^*\right) $ such that 
\begin{align}
	&D_{x^*}\mathcal{L}(x^*;\lambda_1^*,\lambda_2^*,\lambda_3^*) = \mathbb{E}_{\mathbb{P}^*}\left[ \frac{-\xi}{1+\exp((\xi^{\top}x^*-\bar{\rho})/t)}\right]   -\lambda_1^*\mathbb{E}_{\mathbb{P}^*}(\xi)-\lambda_2^*e-\lambda_3^*=0,\label{kktPopt1}\\
	& \mathbb{E}_{\mathbb{P}^*}(\xi^{\top}x^*)-\bar{\rho}=0,\ e^{\top}x^*-1=0,\ 0\leq \lambda_3^*\perp x^*\geq0. \label{kktPopt2}
\end{align}
Define 
$$
g(x^*,\xi):=\frac{-\xi}{1+\exp((\xi^{\top}x^*-\bar{\rho})/t)}.
$$ 
Multiplying \eqref{kktPopt1} by $(x^*)^{\top}$ and invoking \eqref{kktPopt2}, we have
\begin{equation}\label{lambda2}
	\lambda_2^* = (x^*)^{\top} \mathbb{E}_{\mathbb{P}^*}\left[  g(x^*,\xi)\right]  -\lambda_1^*\bar{\rho}.
\end{equation}
Substituting $\lambda_2^*$ into equation \eqref{kktPopt1}, we obtain
\begin{equation}\label{lambda1}
	\lambda_1^* = \frac{\mathbb{E}_{\mathbb{P}^*}\left[ g^i(x^*,\xi)\right]- (x^*)^{\top} \mathbb{E}_{\mathbb{P}^*}\left[  g(x^*,\xi)\right]}{\mathbb{E}_{\mathbb{P}^*}(\xi^i)-\bar{\rho}},\ (\lambda_3^*)^i=0,
\end{equation}
where $ i\in \mathcal{I}_*:=\left\lbrace i\in\left\lbrace 1,\ldots,m\right\rbrace \mid (x^*)^i \neq0\right\rbrace.$
On the other hand, for $ i\in \bar{\mathcal{I}}_*:=\left\lbrace i\in\left\lbrace 1,\ldots,m\right\rbrace \mid (x^*)^i =0\right\rbrace,$ we get 
\begin{equation}\label{lambda3}
	(\lambda_3^*)^i=\mathbb{E}_{\mathbb{P}^*}\left[ g^i(x^*,\xi)\right] -\lambda_1^*\mathbb{E}_{\mathbb{P}^*}(\xi^i)-\lambda_2^*.
\end{equation}
Denote $h(\xi;x^*)  = g(x^*,\xi)-\lambda_1^*\xi-\lambda_2^*e-\lambda_3^*.$ Thus, the first-order optimality condition for problem \eqref{smooth1} can be written as
$$
\mathbb{E}_{\mathbb{P}^*}\left[ h(\xi;x^*)\right] =0.
$$

Note that $\epsilon$ should be suitably selected  such that the set $
\mathcal{U}_{\epsilon}(\widehat{\mathbb{P}}_N)=\lbrace \mathbb{P}: d_{W}(\mathbb{P},\widehat{\mathbb{P}}_N)\leq\epsilon\rbrace$ contains all the probability measures that are plausible variations represented by $\widehat{\mathbb{P}}_N$. Since any 
$$
x_{\mathbb{P}}=\mathop{\arg\min}\limits_{x\in\mathbb{R}^m}\left\lbrace \mathbb{E}_{\mathbb{P}}\left[  t\ln\left( 1+\exp\left( \frac{\bar{\rho}-\xi^{\top}x}{t}\right)\right)\right]  \bigg|\  \mathbb{E}_{\mathbb{P}}(\xi^{\top}x)=\bar{\rho}, x\in\mathcal{X}\right\rbrace
$$
with $\mathbb{P}\in\mathcal{U}_{\epsilon}(\widehat{\mathbb{P}}_N)$ is a plausible estimate of $x^*$,
we define the set
$$
\Lambda_{\epsilon}(\widehat{\mathbb{P}}_N):=\bigcup_{\mathbb{P}\in\mathcal{U}_{\epsilon}(\widehat{\mathbb{P}}_N)}x_{\mathbb{P}},
$$
which  contains  all plausible selections of $x^*$. 
The set $\Lambda_{\epsilon}(\widehat{\mathbb{P}}_N)$ is a natural confidence region for $x^*$ when $\epsilon$ is sufficiently large. Our goal is  to find the smallest $\epsilon^*_N$ such that $x^*\in\Lambda_{\epsilon}(\widehat{\mathbb{P}}_N)$  holds with a given confidence level $1-\bar{\alpha}$, i.e.,
\begin{equation}\label{optepsi}
	\epsilon^*_N = \min\left\lbrace \epsilon>0: \mathbb{P}^*\left( x^*\in\Lambda_{\epsilon}(\widehat{\mathbb{P}}_N)\right) \geq1-\bar{\alpha}\right\rbrace.
\end{equation}
Since problem \eqref{optepsi} is difficult to solve, we refer to the RWPI approach to solve problem \eqref{optepsi} asymptotically. It is not hard to find that $x\in\Lambda_{\epsilon}(\widehat{\mathbb{P}}_N)$ if and only if there exist $\mathbb{P}\in\mathcal{U}_{\epsilon}(\widehat{\mathbb{P}}_N)$, $\lambda_1,\lambda_2\in\mathbb{R},\lambda_3\in\mathbb{R}_{+}^m$ such that the first order optimality condition holds, i.e.,
\begin{equation}\label{kktP}
	\mathbb{E}_{\mathbb{P}}\left[ h(\xi;x)\right] = \mathbb{E}_{\mathbb{P}}\left[g(x,\xi)-\lambda_1\xi-\lambda_2e-\lambda_3\right]=0.
\end{equation}
Robust Wasserstein  profile (RWP) function associated with the conditions \eqref{kktP} is defined by 
\begin{equation}\label{RWPfun}
	R_N(x):=\min\left\lbrace d_W(\mathbb{P},\widehat{\mathbb{P}}_N):
	\mathbb{E}_{\mathbb{P}}\left[ h(\xi;x)\right] =0\right\rbrace.
\end{equation}
From the definition of $R_N(\cdot)$ and $h(\cdot;\cdot)$, we have
$$
R_N(x^*):=\min\left\lbrace d_W(\mathbb{P},\widehat{\mathbb{P}}_N):
\mathbb{E}_{\mathbb{P}}\left[ h(\xi;x^*)\right] =\mathbb{E}_{\mathbb{P}}\left[ g(x^*,\xi)-\lambda_1^*\xi-\lambda_2^*e-\lambda_3^*\right] =0\right\rbrace,
$$
where $\lambda_1^*,\lambda_2^*$ and $\lambda_3^*$ are the Lagrange multipliers in \eqref{kktPopt1} and \eqref{kktPopt2}.
Thus, the following relation holds:
$$
R_N(x^*)\leq\epsilon\iff x^*\in\Lambda_{\epsilon}(\widehat{\mathbb{P}}_N).
$$
Therefore, problem \eqref{optepsi} is equivalent to
$$
\epsilon^*_N = \min\left\lbrace \epsilon>0: \mathbb{P}^*\left( R_N(x^*)\leq \epsilon \right) \geq1-\bar{\alpha}\right\rbrace.
$$

By virtue of \cite[Proposition 4]{BJ.KY.MK.2016} and \cite[Theorem 3.3]{HX.NK.2023}, one can obtain the weak convergence result with respect to $R_N(x^*)$, which is shown in the next theorem.
\begin{theorem}\label{thm3.1}
	Assume that  problem \eqref{smooth1} has a unique solution $x^*$. As $N\to\infty$, we have 
	$$
	N^{1/2}R_N(x^*)\stackrel{D}{\longrightarrow}\bar{R}(1),
	$$
	where 
	$$
	\begin{aligned}
	&\bar{R}(1):=\max_{\zeta\in\bar{\mathbb{E}}}\left\lbrace\zeta^{\top}H  \right\rbrace \ {\rm{with}}\  \bar{\mathbb{E}}=\left\lbrace\zeta\in\mathbb{R}^N:\|\zeta^{\top}
	 D_{\xi}h(\xi,x^*)\|_1 \leq 1\right\rbrace,\\
	&H\sim\mathcal{N}\left( {\bf{0}}, Cov_{\mathbb{P}^*}\left[h(\xi;x^*) \right] \right)
	\end{aligned}
	$$
	and $Cov_{\mathbb{P}^*}\left[h(\xi;x^*) \right] = \mathbb{E}_{\mathbb{P}^*}\left[ h(\xi;x^*)h(\xi;x^*)^{\top}\right]$,
	where 
	$$
	\begin{aligned}
	D_{\xi}h(\xi,x^*) &= \frac{x^*\xi ^{\top}}{t\left[ 1+\exp((\xi^{\top}x^*-\bar{\rho})/t)\right] \left[ 1+\exp((-\xi^{\top}x^*+\bar{\rho})/t)\right] }\\
	&+\frac{-I_m}{1+\exp((\xi^{\top}x^*-\bar{\rho})/t)}-\lambda_1^*I_m.
	\end{aligned}
	$$
	Furthermore, $	\bar{R}(1)$ has the stochastic upper bound:
	$$
	\bar{R}(1)=\max_{\zeta\in\bar{\mathbb{E}}}\left\lbrace\zeta^{\top}H  \right\rbrace\overset{D}{\leq} \max_{\|\zeta\|_1\leq 1}\left\lbrace\zeta^{\top}H  \right\rbrace:=\|\tilde{H}\|_\infty,
	$$
	where
	$$
	\tilde{H}\sim\mathcal{N}\left( {\bf{0}}, Cov_{\mathbb{P}^*}\left[(1+|\hat{\lambda}_1^*|)|\xi|+|\hat{\lambda}_2^*|e+\hat{\lambda}_3^* \right] \right),
	$$ 
	$	\bar{R}(1)\overset{D}{\leq}\|\tilde{H}\|_\infty$ represents that $\bar{R}(1)$ is stochastically dominated by $\|\tilde{H}\|_\infty$, that is, for $a\in\mathbb{R}$, $\mathbb{P}(\bar{R}(1)\leq a)\geq \mathbb{P}(\|\tilde{H}\|_\infty\leq a) $ holds
	and 
	\begin{equation}\label{lambda123}
		\left\{
		\begin{aligned}
			&	\hat{\lambda}_1^*=\lim\limits_{t\to0^{+}}\lambda_1^*=\frac{-\mathbb{E}_{\mathbb{P}^*}\left( \xi^i{\bf{1}}_{\left\lbrace \xi^{\top}x^*-\bar{\rho}<0\right\rbrace }\right) +(x^*)^{\top}\mathbb{E}_{\mathbb{P}^*}\left( \xi{\bf{1}}_{\left\lbrace \xi^{\top}x^*-\bar{\rho}<0\right\rbrace }\right)}{\mathbb{E}_{\mathbb{P}^*}(\xi^i)-\bar{\rho}},\  i\in \mathcal{I}_*,\\
			&\hat{\lambda}_2^*=\lim\limits_{t\to0^{+}} \lambda_2^*= (x^*)^{\top}\mathbb{E}_{\mathbb{P}^*}\left( \xi{\bf{1}}_{\left\lbrace \xi^{\top}x^*-\bar{\rho}<0\right\rbrace }\right)  -\hat{\lambda}_1^*\bar{\rho},\\
			&\hat{\lambda}_3^*=\lim\limits_{t\to0^{+}} \lambda_3^*=\mathbb{E}_{\mathbb{P}^*}\left( \xi{\bf{1}}_{\left\lbrace \xi^{\top}x^*-\bar{\rho}<0\right\rbrace }\right) -\hat{\lambda}_1^*\mathbb{E}_{\mathbb{P}^*}(\xi)-\hat{\lambda}_2^*e.
		\end{aligned}\right.
	\end{equation}
\end{theorem}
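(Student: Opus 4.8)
The plan is to recognize Theorem~\ref{thm3.1} as a specialization of the general Robust Wasserstein Profile Inference limit theorem (Proposition~4 of \cite{BJ.KY.MK.2016}, in the form adapted to a smoothed nonsmooth objective by \cite[Theorem~3.3]{HX.NK.2023}) applied to the particular estimating equation $\mathbb{E}_{\mathbb{P}^*}[h(\xi;x^*)]=0$ derived above from the KKT system of the smoothed problem \eqref{smooth1}. Thus the bulk of the argument is verification of hypotheses plus two explicit computations (the Jacobian $D_\xi h$ and the dominating covariance), rather than a fresh convergence proof. First I would record that $x^*$ is characterized by $\mathbb{E}_{\mathbb{P}^*}[h(\xi;x^*)]=0$ with $h(\xi;x^*)=g(x^*,\xi)-\lambda_1^*\xi-\lambda_2^* e-\lambda_3^*$, so that the RWP function $R_N(x^*)$ in \eqref{RWPfun} is exactly the profile function to which the general theorem applies, and the convergence $N^{1/2}R_N(x^*)\stackrel{D}{\longrightarrow}\bar{R}(1)$ becomes an instance of that result once its hypotheses are checked.

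Next I would verify the regularity conditions required by \cite{BJ.KY.MK.2016,HX.NK.2023}: that $\xi\mapsto h(\xi;x^*)$ is continuously differentiable (clear, since the logistic weight $1/(1+\exp((\xi^\top x^*-\bar\rho)/t))$ is smooth for fixed $t>0$), that $\mathbb{E}_{\mathbb{P}^*}[h(\xi;x^*)]=0$ (the first-order condition), and that $h$ and $D_\xi h$ possess the finite second moments needed both for the central limit theorem and for the linearization of the Wasserstein constraint. Here Assumption~\ref{Assum1} together with the elementary entrywise bound $|g(x^*,\xi)|\le|\xi|$ (the logistic weight lies in $(0,1)$) controls the tails and guarantees that $Cov_{\mathbb{P}^*}[h(\xi;x^*)]$ is finite. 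With these in place the cited results yield $N^{1/2}R_N(x^*)\stackrel{D}{\longrightarrow}\bar{R}(1)$, where the Gaussian $H$ arises from the CLT applied to $N^{-1/2}\sum_{i=1}^N h(\hat\xi_i;x^*)$ (centered precisely because $\mathbb{E}_{\mathbb{P}^*}[h]=0$), and the feasible set $\bar{\mathbb{E}}$ is the linearized dual constraint generated by the $\|\cdot\|_\infty$ ground cost, whose dual norm $\|\cdot\|_1$ acts through the Jacobian $D_\xi h(\xi,x^*)$.

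I would then carry out the one genuine calculation, the Jacobian. Writing $s=(\xi^\top x^*-\bar\rho)/t$ and differentiating $g(x^*,\xi)=-\xi/(1+e^s)$ by the quotient rule yields a rank-one correction proportional to $x^*\xi^\top\, e^s/[t(1+e^s)^2]$ together with the diagonal term $-I_m/(1+e^s)$; using the identity $e^s/(1+e^s)^2=1/[(1+e^s)(1+e^{-s})]$ and adding the contribution $-\lambda_1^* I_m$ from $-\lambda_1^*\xi$ reproduces the stated expression for $D_\xi h(\xi,x^*)$. For the stochastic upper bound I would enlarge the dual-feasible region, passing from $\max_{\zeta\in\bar{\mathbb{E}}}\zeta^\top H$ to $\max_{\|\zeta\|_1\le1}\zeta^\top H=\|H\|_\infty$, and then dominate $H$ by a Gaussian $\tilde H$ whose covariance is read off from the entrywise envelope $|h(\xi;x^*)|\le(1+|\lambda_1^*|)|\xi|+|\lambda_2^*|e+\lambda_3^*$; taking $t\to0^+$ in the multiplier formulas \eqref{lambda1}--\eqref{lambda3}, where the smoothed weight converges to the indicator $\mathbf{1}_{\{\xi^\top x^*-\bar\rho<0\}}$, produces the limiting multipliers $\hat\lambda_1^*,\hat\lambda_2^*,\hat\lambda_3^*$ of \eqref{lambda123} and hence the covariance defining $\tilde H$, following the dominance argument of \cite[Theorem~3.3]{HX.NK.2023}. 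The main obstacle I anticipate is the technical verification that the smoothed problem's multipliers converge as $t\to0^+$ and that the moment and non-degeneracy conditions of the RWPI theorem genuinely hold for this $h$ — in particular one must ensure the denominators $\mathbb{E}_{\mathbb{P}^*}(\xi^i)-\bar\rho$ appearing in \eqref{lambda1} stay bounded away from zero so the limits in \eqref{lambda123} are well defined; everything else amounts to bookkeeping around the cited results.
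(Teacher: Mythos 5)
Your overall route is the same as the paper's: characterize $x^*$ by $\mathbb{E}_{\mathbb{P}^*}[h(\xi;x^*)]=0$, verify the moment and non-degeneracy hypotheses of the RWPI limit theorem of Blanchet et al.\ (as adapted in \cite{HX.NK.2023}), invoke it to get $\sqrt{N}R_N(x^*)\stackrel{D}{\longrightarrow}\bar{R}(1)$, compute $D_\xi h$ by the quotient rule, and obtain the dominating Gaussian $\tilde H$ from the entrywise envelope of $h$ together with the $t\to0^+$ limits of the multipliers. All of that matches Appendix~B.

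There is, however, one step you assert rather than prove, and it is the only genuinely non-routine part of the upper bound. You describe passing from $\max_{\zeta\in\bar{\mathbb{E}}}\zeta^{\top}H$ to $\max_{\|\zeta\|_1\leq 1}\zeta^{\top}H$ as ``enlarging the dual-feasible region,'' but this is an enlargement only if $\bar{\mathbb{E}}\subseteq\{\zeta:\|\zeta\|_1\leq 1\}$, which is exactly what must be established; a priori the constraint $\|\zeta^{\top}D_{\xi}h(\xi,x^*)\|_1\leq 1$ could admit $\zeta$ with $\|\zeta\|_1>1$. The paper proves the containment by a H\"older-type lower bound
$\|\zeta^{\top}D_{\xi}h(\xi,x^*)\|_1\geq c(\xi)\,\|\zeta\|_1$ with an explicit factor $c(\xi)$ depending on the logistic weights, $\|\xi\|_\infty$, $\|x^*\|_1$ and $|\lambda_1^*|$, and then exhibits, for each $\epsilon>0$, a positive-probability set $\widetilde{\Omega}_{\epsilon}$ of $\xi$ on which any $\zeta$ with $\|\zeta\|_1=(1-\epsilon)^{-2}>1$ violates the constraint, so such $\zeta$ cannot lie in $\bar{\mathbb{E}}$. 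Without this argument (or some substitute), the chain $\bar{R}(1)\overset{D}{\leq}\|\tilde H\|_\infty$ is not justified. The remaining items you flag as risks — convergence of the multipliers as $t\to0^+$ and the denominators $\mathbb{E}_{\mathbb{P}^*}(\xi^i)-\bar\rho$ being nonzero — are handled in the paper essentially as you anticipate (pointwise limits of the logistic weight to the indicator ${\bf 1}_{\{\xi^{\top}x^*-\bar\rho<0\}}$), so apart from the containment argument your proposal is complete.
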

\begin{proof}
		See Appendix \ref{App2}.
\end{proof}

Referring to \cite{BJ.MK.ANV.2021}, we  propose the robust selection approach for estimating $\epsilon$, which is presented in the following Algorithm \ref{algepsi}.
\begin{algorithm}[H]
	\caption{ An approach  for estimating  the Wasserstein radius $\epsilon$ in \eqref{DRMLSAD}} \label{algepsi}
	\begin{algorithmic}
		\Require  Training samples $\hat{\xi}_1,\dots,\hat{\xi}_N$. Target return $\bar{\rho}.$
		\Ensure The appropriate radius $\epsilon$.
		\State  {\bf{Step 1:}} Compute 
		$$
		\begin{aligned}
			&	\hat{\lambda}_1^{\rm{erm}}=\frac{-\mathbb{E}_{\widehat{\mathbb{P}}_N}\left( (\hat{\xi})^i{\bf{1}}_{\left\lbrace \hat{\xi}^{\top}\hat{x}^{\rm{erm}}-\bar{\rho}<0\right\rbrace }\right) +(\hat{x}^{\rm{erm}})^{\top}\mathbb{E}_{\widehat{\mathbb{P}}_N}\left( \hat{\xi}{\bf{1}}_{\left\lbrace \hat{\xi}^{\top}\hat{x}^{\rm{erm}}-\bar{\rho}<0\right\rbrace }\right)}{\mathbb{E}_{\widehat{\mathbb{P}}_N}\left[ (\hat{\xi})^i\right] -\bar{\rho}},\\
			&  i\in \left\lbrace i\mid (\hat{x}^{\rm{erm}})^i\neq 0\right\rbrace,\\
			&\hat{\lambda}_2^{\rm{erm}}= (\hat{x}^{\rm{erm}})^{\top}\mathbb{E}_{\widehat{\mathbb{P}}_N}\left( \hat{\xi}{\bf{1}}_{\left\lbrace \hat{\xi}^{\top}\hat{x}^{\rm{erm}}-\bar{\rho}<0\right\rbrace }\right)  -\hat{\lambda}_1^{\rm{erm}}\bar{\rho},\\
			&\hat{\lambda}_3^{\rm{erm}}=\mathbb{E}_{\widehat{\mathbb{P}}_N}\left( \hat{\xi}{\bf{1}}_{\left\lbrace \hat{\xi}^{\top}\hat{x}^{\rm{erm}}-\bar{\rho}<0\right\rbrace }\right) -\hat{\lambda}_1^{\rm{erm}}\mathbb{E}_{\widehat{\mathbb{P}}_N}(\hat{\xi})-\hat{\lambda}_2^{\rm{erm}}e
		\end{aligned}
		$$
		to approximate $\hat{\lambda}_1^*,\hat{\lambda}_2^*,\hat{\lambda}_3^*$ in \eqref{lambda123},
		where $\hat{x}_N^{\rm{erm}}$ is the optimal solution of the problem \eqref{MLSAD} under distribution $\widehat{\mathbb{P}}_N.$
		
		\State  {\bf{Step 2:}} Denote  $\tilde{h}(\xi,\lambda_1,\lambda_2,\lambda_3):=(1+|{\lambda}_1|)|\xi|+|{\lambda}_2|e+{\lambda}_3$. Compute the covariance matrix $\hat{\Sigma}$ of $\left\lbrace \tilde{h}(\hat{\xi}_1,\hat{\lambda}_1^{\rm{erm}},\hat{\lambda}_2^{\rm{erm}},\hat{\lambda}_3^{\rm{erm}}),\dots,\tilde{h}(\hat{\xi}_N,\hat{\lambda}_1^{\rm{erm}},\hat{\lambda}_2^{\rm{erm}},\hat{\lambda}_3^{\rm{erm}})\right\rbrace.
		$
		\State {\bf{Step 3:}} Obtain independent samples $\tilde{H}_1,\dots,\tilde{H}_k$ from $\mathcal{N}({\bf{0}},\hat{\Sigma})$, where $k = \lfloor 0.2N \rfloor.$ Let $\eta_{1-\bar{\alpha}}$ be the $(1-\bar{\alpha})$-quantile  of $\|\tilde{H}\|_{\infty}$.
		\State {\bf{Step 4:}} Return $\epsilon = \sqrt{N}\times\eta_{1-\bar{\alpha}}$.
	\end{algorithmic}
\end{algorithm}

\section{A preconditioned {\sc{PpdSsn}} algorithm for DR-MLSAD model}\label{sec:4}
In this section, we aim to design an efficient algorithm for solving problem \eqref{DR2}, which is equivalent to the DR-MLSAD model. 

Define the set $ \mathcal{C}:=\lbrace x\in\mathbb{R}^m\mid\hat{\mu}^{\top}x\geq \rho+\epsilon,x\in\mathcal{X}\rbrace$, where $\mathcal{X}$ denotes the simplex. Denote the matrix $A=[\hat{\mu}-\hat{\xi}_1,\ldots,\hat{\mu}-\hat{\xi}_N] ^{\top}\in\mathbb{R}^{N\times m}$. Then problem \eqref{DR2} can be rewritten as the following unconstrained convex problem:
\begin{equation}\label{3.8}
	\min_{x\in\mathbb{R}^m}\ \left\{\frac{1}{2N}\|Ax-\epsilon e\|_1+\frac{1}{2N}e^{\top}(Ax-\epsilon e)+\epsilon+\chi_{\mathcal{C}}(x)\right\},
\end{equation}
where $\chi_{\mathcal{C}}(\cdot)$ is the indicator function of  $\mathcal{C}$.   Problem \eqref{3.8} can be equivalently expressed as
\begin{equation}\label{DR3}
	\begin{aligned}
		&\min_{x\in\mathbb{R}^m,y\in\mathbb{R}^N}\ \frac{1}{2N}e^{\top}y+ \frac{1}{2N}\|y\|_1+\epsilon+\chi_{\mathcal{C}}(x)\\
		\quad& \quad \quad {\rm{s.t.}} \quad\ y=Ax-\epsilon e.
	\end{aligned}
\end{equation}
The Karush-Kuhn-Tucker (KKT)  conditions for problem \eqref{DR3} are formulated as follows:
\begin{equation}\label{kkt}
	\left\{\begin{aligned}
		&-\frac{1}{2N}e-u\in\frac{1}{2N}\partial \|y\|_1,\\ &A^{\top}u\in\partial\chi_{\mathcal{C}}(x),\\
		&y = Ax-\epsilon e,
	\end{aligned}
	\right.
\end{equation}
where $u\in\mathbb{R}^N$ is the Lagrange multiplier. Denote the optimal solution set of problem \eqref{DR3} by $\mathcal{S}_{P}$ and the KKT residual function corresponding to problem \eqref{DR3} by
$$
R(x,y):= \left [                
\begin{array}{c}    
	y-{\rm{Prox}}_{\frac{1}{2N}\|\cdot\|_1}(y-\frac{1}{2N}e-u)\\
	x-\Pi_{\mathcal{C}}(x+A^{\top}u) \\
	y-Ax+\epsilon e
\end{array}
\right],
$$
which means that $(x^*,y^*)\in\mathcal{S}_{P}$ if and only if  $R(x^*,y^*)=0.$ We assume that the optimal solution set $\mathcal{S}_{P}$ is  nonempty and compact, which is reasonable due to \cite{ZZ.AMS.2017}. 

Next, we apply a preconditioned proximal point dual semismooth Newton ({\sc{PpdSsn}}) algorithm for  problem \eqref{DR3}. Given a nondecreasing sequence of positive real numbers $\left\{\sigma_k\right\}$ and a starting point $x^0$,  the preconditioned {\sc Ppa} \cite{RRT.1976} generates the sequence $\left\{x^k\right\}$  via the following rules for solving \eqref{DR3}:
\begin{equation}\label{ppa}
	\min_{x\in\mathbb{R}^m}\left\{
	\frac{1}{2N}e^{\top}(Ax-\epsilon e)+ \frac{1}{2N}\|Ax-\epsilon e\|_1+\epsilon+\chi_{\mathcal{C}}(x)+\frac{1}{2\sigma_k}\|x-x^k\|^2_{\widetilde{\mathcal{M}}_k}
	\right\},
\end{equation}
where $\widetilde{\mathcal{M}}_k$ is a given sequence of self-adjoint positive definite linear operators that satisfy
$$
\widetilde{\mathcal{M}}_{k}\succeq{\widetilde{\mathcal{M}}}_{k+1},\ \widetilde{\mathcal{M}}_k\succeq\lambda_{\min}(\widetilde{\mathcal{M}}_{k})I_{m},\ \forall k\geq0.
$$
In this paper, we choose $\widetilde{\mathcal{M}}_k=I_{m}+\left( \sigma_k/\gamma_k\right) A^{\top}A$, where $\{\gamma_k\}$ is a given sequence of positive real numbers.
Specifically, the problem \eqref{ppa} is equivalent to the following problem:
\begin{equation}\label{eppa}
	\begin{aligned}
		&(x^{k+1};y^{k+1})\approx\mathcal{P}_k(x^k,y^k)\\
		&:=\mathop{\arg\min}\limits_{x,y}\left\{
		\begin{aligned}
			&\frac{1}{2N}e^{\top}y+ \frac{1}{2N}\|y\|_1+\epsilon+\chi_{\mathcal{C}}(x)\\
			&+\frac{1}{2\sigma_k}\|x-x^k\|^2
			+\frac{1}{2\gamma_k}\|y-(Ax^k-\epsilon e)\|^2
		\end{aligned}\ \Bigg| \ y = Ax-\epsilon e
		\right\}.
	\end{aligned}
\end{equation}
The corresponding Lagrangian function of problem \eqref{eppa} is given by
$$
\begin{aligned}
L(x,y;u)&=\frac{1}{2N}e^{\top}y+ \frac{1}{2N}\|y\|_1+\epsilon+\chi_{\mathcal{C}}(x)+\frac{1}{2\sigma_k}\|x-x^k\|^2\\
&+\frac{1}{2\gamma_k}\|y-(Ax^k-\epsilon e)\|^2+\left\langle u,y-Ax+\epsilon e\right\rangle.
\end{aligned}
$$
The dual of problem \eqref{eppa} admits the following minimization form:
\begin{equation}\label{ppadual}
	\min_{\lambda\in\mathbb{R}^N}\left\{
	\begin{aligned}
		\psi_k(\lambda):=&-E^{\sigma_k}_{\delta_{\mathcal{C}}}(x^k+\sigma_kA^{\top}u)-\frac{1}{2\sigma_k}\|x^k\|^2+\frac{1}{2\sigma_k}\|x^k+\sigma_kA^{\top}u\|^2\\
		&-\frac{1}{\gamma_k}E^1_{\frac{\gamma_k}{2N}\|\cdot\|_1}(Ax^k-\epsilon e-\gamma_k(\frac{1}{2N}e+u))-\frac{1}{2\gamma_k}\|Ax^k-\epsilon e\|^2\\
		&+\frac{1}{2\gamma_k}\|Ax^k-\epsilon e-\gamma_k(\frac{1}{2N}e+u)\|^2-\epsilon(1+u^{\top} e) 
	\end{aligned}
	\right\}.
\end{equation}
Indeed, one can easily derive that if $u^{k+1}\approx\mathop{\arg\min}\limits_{\lambda\in\mathbb{R}^N}\psi_k(u),$ then $x^{k+1}$ and $y^{k+1}$ admit the following closed-form expression:
$$
x^{k+1}=\Pi_{\mathcal{C}}(x^k+\sigma_kA^{\top}u^{k+1}),\ y^{k+1}={\rm{Prox}}_{\frac{\gamma_k}{2N}\|\cdot\|_1}(Ax^k-\epsilon e-\gamma_k(\frac{1}{2N}e+u^{k+1})).
$$
It should be noted that  $\Pi_{\mathcal{C}}(\cdot)$ can be computed by the semismooth Newton ({\sc Ssn}) algorithm or an algorithm based on Lagrangian
relaxation approach and secant method (LRSA) (cf. \cite{YJL.WMZ.2023}). In our experiment, we apply the more efficient LRSA algorithm to calculate $\Pi_{\mathcal{C}}(\cdot)$.

\subsection{ The {\sc PpdSsn} algorithm and its convergence}
We give the specific framework of the {\sc PpdSsn} algorithm in Algorithm \ref{Alg:1} and present the result on convergence of the preconditioned {\sc PpdSsn} algorithm.
\begin{algorithm}[H]
	\caption{  ({\sc{PpdSsn}}) Preconditioned proximal point dual semismooth Newton algorithm for \eqref{DR3}} \label{Alg:1}
	\begin{algorithmic}[1]
		\Require  $\sigma_0>0$, $\gamma_0>0$, $(x^0,y^0)\in\mathbb{R}^m\times\mathbb{R}^N , u^0\in\mathbb{R}^N$. Set $k=0$. 
		\State  Approximately compute
		\begin{equation}\label{sub}
			u^{k+1}\approx\mathop{\arg\min}\limits_{u\in\mathbb{R}^N}\psi_k(u)
		\end{equation}
		to satisfy the stopping criteria \eqref{eqn1} and \eqref{eqn2}.
		\State Compute 
		$$
		x^{k+1}=\Pi_{\mathcal{C}}(x^k+\sigma_kA^{\top}u^{k+1}),\ y^{k+1}={\rm{Prox}}_{\frac{\gamma_k}{2N}\|\cdot\|_1}(Ax^k-\epsilon e-\gamma_k(\frac{1}{2N}e+u^{k+1})).
		$$
		
		\State Update $\sigma_{k+1} \uparrow \sigma_\infty<\infty,\gamma_{k+1} \uparrow \gamma_\infty< \infty$, $k\leftarrow k+1,$  go to step $1$.
	\end{algorithmic}
\end{algorithm} 
For later analysis, we define the following function $f_k$:
\begin{equation}\label{fk}
	f_k(x,y):=\frac{1}{2N}e^{\top}y+ \frac{1}{2N}\|y\|_1+\epsilon+\chi_{\mathcal{C}}(x)+\frac{1}{2\sigma_k}\left\| 
	(x;y)
	-(x^k;Ax^k-\epsilon e)\right\|^2_{{\mathcal{M}}_k}+\chi_{\widetilde{\mathcal{F}}}(x,y),
\end{equation}
where ${\mathcal{M}}_k={\rm{Diag}}(e;\sigma_k{\gamma_k}^{-1}e)\in\mathbb{R}^{(m+N)\times(m+N)}$ and $\widetilde{\mathcal{F}}$  denotes the feasible set of \eqref{eppa}.
Thus, problem \eqref{ppa} can be written as follows:
$$
(x^{k+1};y^{k+1})\approx\mathcal{P}_k(x^k,y^{k+1}):=\mathop{\arg\min}\limits_{x\in\mathbb{R}^m,y\in\mathbb{R}^N}\left\{
f_k(x,y)
\right\}.
$$

Before analyzing the convergence of the {\sc{PpdSsn}} algorithm, it is necessary to make the following assumption \cite{XL.DE.KCT.2020}.
\begin{assumption}\label{assum1}
	The sequences $\left\{\sigma_k\gamma_k^{-1}\right\}$ and $\left\{\sigma_k\right\}$ of positive real numbers are both bounded
	away from 0, i.e., $(\sigma_k\gamma_k^{-1})\downarrow (\sigma_{\infty}\gamma_{\infty}^{-1})>0.$
\end{assumption}
By virtue of the work of Rockafellar \cite{RRT.1976}, the stopping criterion for step $1$ in Algorithm \ref{Alg:1} is given below:
\begin{align}
	\| (x^{k+1};y^{k+1})-\mathcal{P}_k(x^k,y^k)\|_{{\mathcal{M}}_{k}} & \leq \varsigma_k,\ \varsigma_k\geq 0,\ \sum_{k=0}^{\infty}\varsigma_k<\infty, \label{eqn1}\tag{A}\\
	\| (x^{k+1};y^{k+1}) -\mathcal{P}_k(x^k,y^k) \|_{{\mathcal{M}}_{k}}& \leq \zeta_k\|(x;y)
	-(x^k;Ax^k-\epsilon e)\|_{{\mathcal{M}}_{k}},\ \sum_{k=0}^{\infty}\zeta_k<\infty, \label{eqn2}\tag{B}
\end{align}
where $0\leq\zeta_k\leq 1,\forall k\geq 1.$
Since $\mathcal{P}_k(x^k)$ is difficult to compute in practice, thanks to \cite{ML.DS.KCT.2019}, we utilize the following equivalent stopping criteria in place of stopping criteria \eqref{eqn1} and \eqref{eqn2}:
\begin{align}
	f_k(x^{k+1},y^{k+1})+\psi_k(u^{k+1})&\leq\frac{\varsigma_k^2}{2\sigma_k},\ \varsigma_k\geq 0,\ \sum_{k=0}^{\infty}\varsigma_k<\infty, \label{eqn3}\tag{A'}\\
	f_k(x^{k+1},y^{k+1})+\psi_k(u^{k+1})&\leq \frac{\zeta_k^2}{2\sigma_k}\|(x;y)
	-(x^k;Ax^k-\epsilon e)\|_{{\mathcal{M}}_{k}}^2,\ \sum_{k=0}^{\infty}\zeta_k<\infty,\label{eqn4}\tag{B'}
\end{align}
where  for $k=1,\dots, 0\leq\zeta_k\leq 1,$ and $f_k$ and $\psi_k$ are defined in \eqref{fk} and \eqref{ppadual}, respectively.

The validity of local convergence of the {\sc{PpdSsn}} algorithm depends on the error bound condition, thus we need to verify that problem \eqref{DR3} satisfies the error bound condition. The maximal monotone operator \cite{RTR.1976b} associated with \eqref{DR3} is defined by
$$
\mathcal{T}_f(x,y):=\partial f(x,y)=\begin{bmatrix}
	\frac{1}{2N}e+\frac{1}{2N}\partial\|y\|_1\\
	\partial\chi_{\mathcal{C}}(x)
\end{bmatrix}+\partial\chi_{\widetilde{\mathcal{F}}}(x,y).
$$
According to \cite[Definition 10.20]{RRT.WRJB.1998}, $\delta_{\mathcal{C}}(\cdot)$ is a piecewise linear function. It is obvious that $ \|\cdot\|_1$ is piecewise linear. Thus, by virtue of  \cite[Proposition 12.30]{RRT.WRJB.1998}, $\mathcal{T}_f$ is a piecewise polyhedral multifunction. Then it follows from \cite[Proposition 1 \& Corollary]{RSM.1981} that $\mathcal{T}_f$ satisfies the error bound condition, i.e., there exist $r>0$ and $\kappa>0$ such that if $	{\rm{dist}}((x,y),\mathcal{S}_{P})\leq r$,  then it holds
\begin{equation}\label{error}
	{\rm{dist}}((x,y),\mathcal{S}_{P})\leq \kappa {\rm{dist}}(0,\mathcal{T}_f(x,y)).
\end{equation}

As studied in \cite{XL.DE.KCT.2020}, one  can get the following results on the global and local convergence of the {\sc{PpdSsn}} algorithm.
\begin{theorem}\label{thm4}
	(1) Let $\{(x^k,y^k)\}$ be the sequence generated by the {\sc PpdSsn} algorithm with stopping criterion \eqref{eqn1}. Then $\{(x^k,y^k)\}$ is bounded and 
	\begin{equation}\nonumber
		{\rm{dist}}_{\mathcal{M}_{k+1}}((x^{k+1},y^{k+1}),\mathcal{S}_{P})\leq {\rm{dist}}_{\mathcal{M}_{k}}((x^{k},y^{k}),\mathcal{S}_{P})+\varsigma_k,\ \forall k\geq 0.
	\end{equation}
	In addition, $\{(x^k,y^k)\}$ converges to the optimal point $\{(x^*,y^*)\}$ of problem \eqref{DR3} such that $0\in \mathcal{T}_f(x^*,y^*)$.
	
	(2) Let $r:=\sum_{i=0}^{\infty}\varsigma_k+{\rm{dist}}_{\mathcal{M}_{0}}((x^{0},y^{0}),\mathcal{S}_{P}).$ Then, for this $r>0$, there exists a constant $\kappa>0$ such that $\mathcal{T}_f$ satisfies the error bound condition \eqref{error}. Suppose  $\{(x^{k},y^{k})\}$ is the sequence generated by the {\sc PpdSsn} algorithm with the stopping criteria \eqref{eqn1} and \eqref{eqn2} with nondecreasing $\{\sigma_k\}$. Then it holds that for all $k\geq 0$, 
	\begin{equation*}
		{\rm{dist}}_{\mathcal{M}_{k+1}}((x^{k+1},y^{k+1}),\mathcal{S}_{P})\leq \tilde{\mu}_k{\rm{dist}}_{\mathcal{M}_{k}}((x^{k},y^{k}),\mathcal{S}_{P}),
	\end{equation*}
	where 
	$$
	{\tilde{\mu}}_k=\frac{1}{1-\zeta_k}\left[\zeta_k+\frac{(1+\zeta_k)\kappa\lambda_{\max}(\mathcal{M}_k)}{\sqrt{\sigma_k^2+\kappa^2\lambda_{\max}^2(\mathcal{M}_k)}}\right]
	$$ 
	and
	$$
	\mathop{\lim\sup}\limits_{k\to\infty}{\tilde{\mu}}_k={\tilde{\mu}}_{\infty}=\frac{\kappa\lambda_{\infty}}{\sqrt{\sigma_\infty^2+\kappa^2\lambda_{\infty}^2}}<1
	$$
	with $\lambda_\infty=\mathop{\lim\sup}\limits_{k\to\infty}\lambda_{\max}(\mathcal{M}_k).$ In addition, it holds that for all $k\geq0$, 
	$$
	{\rm{dist}}((x^{k+1},y^{k+1}),\mathcal{S}_{P})\leq \frac{\tilde{\mu}_k}{\sqrt{\lambda_{\min}(\mathcal{M}_{k+1})}}{\rm{dist}}_{\mathcal{M}_{k}}((x^{k},y^{k}),\mathcal{S}_{P}).
	$$
\end{theorem}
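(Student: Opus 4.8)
The plan is to view Algorithm \ref{Alg:1} as an inexact, variable-metric proximal point method applied to the maximal monotone operator $\mathcal{T}_f$, and to invoke Rockafellar's convergence theory \cite{RRT.1976} in the preconditioned form of \cite{XL.DE.KCT.2020}; the work lies in checking that every hypothesis of that theory holds here. The first step is to identify the exact proximal map. Substituting the feasibility constraint $y=Ax-\epsilon e$, subproblem \eqref{eppa} collapses to the single proximal step \eqref{ppa} in the metric $\widetilde{\mathcal{M}}_k=I_m+(\sigma_k/\gamma_k)A^{\top}A$, and writing out its optimality condition shows that the exact iterate $\mathcal{P}_k(x^k,y^k)$ is precisely the resolvent value $(\mathcal{I}+\sigma_k\mathcal{M}_k^{-1}\mathcal{T}_f)^{-1}$ evaluated at the anchor $(x^k;Ax^k-\epsilon e)$. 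Since $f$ is closed proper convex, $\mathcal{T}_f=\partial f$ is maximal monotone, so this resolvent is single-valued and firmly nonexpansive in the $\mathcal{M}_k$-norm, with fixed-point set equal to $\mathcal{S}_{P}=\mathcal{T}_f^{-1}(0)$.

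For part (1), I would run Rockafellar's Fej\'er-monotonicity argument in the weighted metric. Because the subproblem carries the constraint $y=Ax-\epsilon e$, both $\mathcal{P}_k(x^k,y^k)$ and $\mathcal{S}_{P}$ lie in $\widetilde{\mathcal{F}}$, so the natural state is the feasible point $(x^k;Ax^k-\epsilon e)$. Fixing any $z^*\in\mathcal{S}_{P}$, firm nonexpansiveness of the resolvent gives $\|\mathcal{P}_k(x^k,y^k)-z^*\|_{\mathcal{M}_k}\leq\|(x^k;Ax^k-\epsilon e)-z^*\|_{\mathcal{M}_k}$, and the absolute error \eqref{eqn1} with the triangle inequality and an infimum over $z^*$ yields the claimed distance recursion once the monotonicity $\mathcal{M}_{k+1}\preceq\mathcal{M}_k$ is used to replace the metric on the left-hand side. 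Summability $\sum_k\varsigma_k<\infty$ then bounds $\{\mathrm{dist}_{\mathcal{M}_k}((x^k,y^k),\mathcal{S}_{P})\}$; combined with $\mathcal{M}_k\succeq\lambda_{\min}(\mathcal{M}_k)I$ and the nonemptiness and compactness of $\mathcal{S}_{P}$, this bounds the sequence, and a quasi-Fej\'er argument forces convergence to a single fixed point, which lies in $\mathcal{S}_{P}$ and satisfies $0\in\mathcal{T}_f(x^*,y^*)$.

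For part (2), the error bound \eqref{error} has already been derived from the piecewise-polyhedral structure of $\mathcal{T}_f$; since part (1) keeps every iterate within the $r$-neighborhood of $\mathcal{S}_{P}$ with $r=\sum_k\varsigma_k+\mathrm{dist}_{\mathcal{M}_0}((x^0,y^0),\mathcal{S}_{P})$, a single modulus $\kappa$ governs all steps. The core estimate is that, under \eqref{error}, the exact step contracts with factor $\kappa\lambda_{\max}(\mathcal{M}_k)/\sqrt{\sigma_k^2+\kappa^2\lambda_{\max}^2(\mathcal{M}_k)}$: one bounds $\mathrm{dist}(0,\mathcal{T}_f(\mathcal{P}_k(x^k,y^k)))$ by $\sigma_k^{-1}\lambda_{\max}(\mathcal{M}_k)$ times the proximal displacement, feeds this into \eqref{error}, and combines it with the nonexpansive bound $\mathrm{dist}(\mathcal{P}_k(x^k,y^k),\mathcal{S}_{P})\leq\mathrm{dist}((x^k;Ax^k-\epsilon e),\mathcal{S}_{P})$ through the Pythagorean relation intrinsic to the resolvent. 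Combining this with the relative error \eqref{eqn2} by the triangle inequality produces exactly the factor $\tilde{\mu}_k$; taking $\limsup_k$ and using Assumption \ref{assum1} (so $\sigma_\infty>0$ and $\lambda_\infty<\infty$) gives $\tilde{\mu}_\infty=\kappa\lambda_\infty/\sqrt{\sigma_\infty^2+\kappa^2\lambda_\infty^2}<1$, and the unweighted estimate follows on dividing by $\sqrt{\lambda_{\min}(\mathcal{M}_{k+1})}$.

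The main obstacle is twofold, both difficulties stemming from the preconditioning. First, the metric $\mathcal{M}_k$ varies with $k$, so the classical fixed-metric estimates must be propagated carefully, relying on the ordering $\mathcal{M}_k\succeq\mathcal{M}_{k+1}$ (which holds because $\sigma_k\gamma_k^{-1}$ is nonincreasing by Assumption \ref{assum1}) and the uniform lower bound $\mathcal{M}_k\succeq\lambda_{\min}(\mathcal{M}_k)I$ to pass distances between consecutive metrics without loss. Second, and more delicate, the abstract criteria \eqref{eqn1}--\eqref{eqn2} cannot be tested directly because $\mathcal{P}_k$ is unknown; one must justify the implementable surrogates \eqref{eqn3}--\eqref{eqn4} through the strong-duality relation between the primal subproblem \eqref{eppa} and its dual \eqref{ppadual}, showing that the duality gap $f_k(x^{k+1},y^{k+1})+\psi_k(u^{k+1})$ dominates $\tfrac{1}{2\sigma_k}\|(x^{k+1};y^{k+1})-\mathcal{P}_k(x^k,y^k)\|_{\mathcal{M}_k}^2$. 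Establishing this gap-to-distance bound, which rests on the $\mathcal{M}_k$-strong convexity of $f_k$, is the technical heart that connects the general theory to the concrete dual Newton iterations.
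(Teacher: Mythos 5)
Your proposal is correct and follows essentially the same route as the paper: the paper's proof of this theorem is simply a citation of Propositions 2.3 and 2.5 of Li, Sun and Toh \cite{XL.DE.KCT.2020}, and your sketch faithfully reconstructs the preconditioned inexact proximal-point machinery (resolvent identification in the $\mathcal{M}_k$-metric, quasi-Fej\'er monotonicity under criterion \eqref{eqn1}, and the error-bound-driven contraction under \eqref{eqn2}) that those propositions encapsulate. The only difference is one of granularity—you unpack the argument the paper outsources to the reference, including the justification of the implementable criteria \eqref{eqn3}--\eqref{eqn4}, which the paper also handles by citation.
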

\begin{proof}
	The first part of the proof follows from the work of Li et al.\cite[proposition 2.3]{XL.DE.KCT.2020}. Due to \cite[proposition 2.5]{XL.DE.KCT.2020}, we get  the desired result.
\end{proof}
\subsection{A semismooth Newton method for the subproblem}
In this subsection,  we shall describe the details of the semismooth Newton method \cite{BK.1988,LQ.JS.1993,DS.JS.2002} for solving subproblem \eqref{ppadual}, which is regarded as the most computationally expensive part of the Algorithm \ref{Alg:1}.  For any given $\sigma,\gamma>0$ and $\tilde{x}\in\mathbb{R}^m,$ our purpose is to solve the problem defined in \eqref{ppadual}. It is clear that $\psi(\cdot)$ is convex and continuously differentiable, whose gradient is given by
\begin{equation*}
	\nabla\psi(u)=-\epsilon e+A\Pi_{\mathcal{C}}(\tilde{x}+\sigma A^{\top}u)-{\rm{Prox}}_{\frac{\gamma_k}{2N}\|\cdot\|_1}(A\tilde{x}-\epsilon e-\gamma(\frac{1}{2N}e+u)).
\end{equation*}
Thus, problem \eqref{ppadual} admits an optimal solution $u^*$, which can be obtained by solving the following nonsmooth equations:
$$
\nabla\psi(u)=0.
$$
In order to solve the above equations, we need to characterize the generalized Jacobian of $\nabla\psi(\cdot)$, which depends on the generalized Jacobian of ${\rm{Prox}}_{\frac{\gamma_k}{2N}\|\cdot\|_1}(\cdot)$ and $\Pi_{\mathcal{C}}(\cdot)$. Since the proximal mapping ${\rm{Prox}}_{\frac{\gamma_k}{2N}\|\cdot\|_1}(\cdot)$ and the projection $\Pi_{\mathcal{C}}(\cdot)$ are Lipschitz continuous, we define the multifunction $\hat{\partial}^2\psi(\cdot)$ by
$$
\hat{\partial}^2\psi(u):=\sigma A\partial\Pi_{\mathcal{C}}(\tilde{x}+\sigma A^{\top}u)A^{\top}+\gamma \partial {\rm{Prox}}_{\frac{\gamma_k}{2N}\|\cdot\|_1}(A\tilde{x}-\epsilon e-\gamma(\frac{1}{2N}e+u)),
$$
where $\partial{\rm{Prox}}_{\frac{\gamma_k}{2N}\|\cdot\|_1}(A\tilde{x}-\epsilon e-\gamma(\frac{1}{2N}e+u))$ denotes the Clarke subdifferential \cite{FHC.1983} of the proximal mapping ${\rm{Prox}}_{\frac{\gamma_k}{2N}\|\cdot\|_1}(\cdot)$ at $A\tilde{x}-\epsilon e-\gamma(\frac{1}{2N}e+u).$
Let $U\in\partial{\rm{Prox}}_{\frac{\gamma_k}{2N}\|\cdot\|_1}(\tilde{z})$ with $\tilde{z}:=A\tilde{x}-\epsilon e-\gamma(\frac{1}{2N}e+u)$. Then we can obtain that $U = {\rm{Diag}}(\bar{u}_1, ..., \bar{u}_N)$ with
$$
\bar{u}_i = \left\{
\begin{aligned}
	&1,\ |\tilde{z}|_i>\frac{\gamma}{2N}, \\
	&0,\ |\tilde{z}|_i\leq\frac{\gamma}{2N}.
\end{aligned}
\right.
$$
On the other hand, it is difficult to characterize the B-subdifferential $\partial_B\Pi_{\mathcal{C}}(\tilde{x}+\sigma A^{\top}u)$ or the Clarke generalized Jacobian $\partial\Pi_{\mathcal{C}}(\tilde{x}+\sigma A^{\top}u)$ of the projection $\Pi_{\mathcal{C}}(\tilde{x}+\sigma A^{\top}u)$ at $\tilde{x}+\sigma A^{\top}u$. Thus, we utilize the  generalized HS-Jacobian proposed by Han and Sun  \cite{HJY.SDF.1997}  to replace the calculation of  $\partial\Pi_{\mathcal{C}}(\tilde{x}+\sigma A^{\top}u)$. 
\subsubsection{The generalized HS-Jacobian of the projection $\Pi_{\mathcal{C}}(\cdot)$}
In this subsection, we shall characterize the generalized HS-Jacobian  of the projection $\Pi_{\mathcal{C}}(\cdot)$ at $\tilde{x}+\sigma A^{\top}u$.

Denote 
$
\hat{z}:=\tilde{x}+\sigma A^{\top}u\in\mathbb{R}^m. 
$
We define the following index subsets of $\{1,\dots,m\}$:
$$
\mathcal{K}_1:=\{i\ |\ (\Pi_{\mathcal{C}}(\hat{z}))_i=0\},\ \mathcal{K}_2:=\{1,\dots,m\}\backslash\mathcal{K}_1.
$$
From the definition of $\mathcal{C}$  we know that  $|\mathcal{K}_2|\neq0$. We also have $ |\mathcal{K}_1|+|\mathcal{K}_2|=m.$  By virtue of the results obtained in \cite{YJL.WMZ.2023}, we give below the explicit formulas for the generalized HS-Jacobian of $\Pi_{\mathcal{C}}(\cdot)$.
\begin{theorem}\label{thm5}
	Assume that $\hat{\mu}\in\mathbb{R}^m,\rho,\epsilon\in\mathbb{R}$ in problem \eqref{DR3} are given. For given $\hat{z}\in\mathbb{R}^m,$ denote 
	$$
	{w}_i=\left\{
	\begin{aligned}
		&1,\ i\in\mathcal{K}_2, \\
		&0,\ {\rm{otherwise}},
	\end{aligned}
	\right.
	({e}_{\mathcal{K}_2}^m)_i=\left\{
	\begin{aligned}
		&1,\ i\in\mathcal{K}_2, \\
		&0,\ {\rm{otherwise}},
	\end{aligned}
	\right.
	(\hat{\mu}_{\mathcal{K}_2}^m)_i=\left\{
	\begin{aligned}
		&\hat{\mu}_i,\ i\in\mathcal{K}_2, \\
		&0,\ {\rm{otherwise}},
	\end{aligned}
	\right.  i=1,2,\dots,m.
	$$
	Then, the element ${N}_0$ of the generalized HS-Jacobian
	for $\Pi_{\mathcal{C}}(\cdot)$ at $\hat{z}$ admits the following explicit expressions:
	
	\noindent{\bf{\uppercase\expandafter{\romannumeral1}}}. If $\hat{\mu}^{\top}\Pi_{\mathcal{C}}(\hat{z})\neq \rho+\epsilon$, then
	$$
	{N}_0 = {\rm{Diag}}({w})-\frac{1}{|\mathcal{K}_2|}{e}_{\mathcal{K}_2}^m({e}_{\mathcal{K}_2}^m)^{\top}.
	$$
	
	\noindent{\bf{\uppercase\expandafter{\romannumeral2}}}. If $\hat{\mu}^{\top}\Pi_{\mathcal{C}}(\hat{z})= \rho+\epsilon$, then the following two cases are taken into consideration. Denote 
	$$
	\eta:=\|\hat{\mu}_{\mathcal{K}_2}\|^2|\mathcal{K}_2|-(\hat{\mu}_{\mathcal{K}_2}^{\top}{e}_{\mathcal{K}_2})^2.
	$$
	
	\noindent{\bf{(\romannumeral1)}} If $\eta\neq 0,$ then
	$$
	\begin{aligned}
		&{N}_0={\rm{Diag}}({w})\\
		&\scriptsize-\frac{1}{\eta}\begin{bmatrix}
			-\sqrt{|\mathcal{K}_2|}(\hat{\mu}_{\mathcal{K}_2}^m)^{\top}+\frac{\hat{\mu}_{\mathcal{K}_2}^{\top}{e}_{\mathcal{K}_2}}{\sqrt{|\mathcal{K}_2|}}(e_{\mathcal{K}_2}^m)^{\top}\\
		\sqrt{\|\hat{\mu}_{\mathcal{K}_2}\|^2-\frac{(\hat{\mu}_{\mathcal{K}_2}^{\top}{e}_{\mathcal{K}_2}^2)}{|{\mathcal{K}_2}|}}(e_{\mathcal{K}_2}^m)^{\top}
		\end{bmatrix}^{\top}\begin{bmatrix}
			-\sqrt{|\mathcal{K}_2|}(\hat{\mu}_{\mathcal{K}_2}^m)^{\top}+\frac{\hat{\mu}_{\mathcal{K}_2}^{\top}{e}_{\mathcal{K}_2}}{\sqrt{|\mathcal{K}_2|}}(e_{\mathcal{K}_2}^m)^{\top}\\
			\sqrt{\|\hat{\mu}_{\mathcal{K}_2}\|^2-\frac{(\hat{\mu}_{\mathcal{K}_2}^{\top}{e}_{\mathcal{K}_2}^2)}{|{\mathcal{K}_2}|}}(e_{\mathcal{K}_2}^m)^{\top}
		\end{bmatrix}.
	\end{aligned}
	$$
	
	\noindent{\bf{(\romannumeral2)}} If $\eta=0$, i.e., $\hat{\mu}_{\mathcal{K}_2}=e_{\mathcal{K}_2}$, then
	$$
{N}_0 = {\rm{Diag}}({w})-\frac{1}{|\mathcal{K}_2|}{e}_{\mathcal{K}_2}^m({e}_{\mathcal{K}_2}^m)^{\top}.
$$
\end{theorem}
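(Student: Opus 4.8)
The plan is to apply the general characterization of the generalized HS-Jacobian of a projection onto a polyhedron established in \cite{YJL.WMZ.2023} to the specific set $\mathcal{C}=\{x\in\mathbb{R}^m\mid e^{\top}x=1,\ \hat{\mu}^{\top}x\geq\rho+\epsilon,\ x\geq0\}$, and then to simplify the resulting expression by exploiting the structure of the constraints active at $p:=\Pi_{\mathcal{C}}(\hat{z})$. Since the projection solves a strictly convex quadratic program over a polyhedron, $p$ is unique and its active set is well defined. Recall that an element $N_0$ of the HS-Jacobian at $p$ is the orthogonal projector onto the null space of the matrix whose rows form a \emph{maximal linearly independent} subset of the gradients of the constraints active at $p$; equivalently, $N_0$ is the orthogonal projection onto the subspace $T$ parallel to the minimal face of $\mathcal{C}$ containing $p$. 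Thus the whole computation reduces to identifying the active constraints and writing the projector onto $T$ in closed form.

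First I would identify the active set. The equality $e^{\top}x=1$ is always active; the bound $x_i\geq0$ is active exactly for $i\in\mathcal{K}_1$, which by definition of $\mathcal{K}_1$ are the indices with $p_i=0$; and the return constraint $\hat{\mu}^{\top}x\geq\rho+\epsilon$ is active iff $\hat{\mu}^{\top}p=\rho+\epsilon$. Consequently
\[ T=\{\,d\in\mathbb{R}^m\mid d_i=0\ \forall i\in\mathcal{K}_1,\ e^{\top}d=0,\text{ and }\hat{\mu}^{\top}d=0\text{ in Case II}\,\}. \]
The constraints $d_i=0$ for $i\in\mathcal{K}_1$ simply restrict $d$ to the coordinate subspace indexed by $\mathcal{K}_2$, whose orthogonal projector is exactly ${\rm{Diag}}(w)$ (note $|\mathcal{K}_2|\neq0$ because $p\in\mathcal{X}$). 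Hence it remains only to project, \emph{within} this coordinate subspace, onto the orthogonal complement of the remaining active gradients, namely $e_{\mathcal{K}_2}$ alone in Case I and $\{e_{\mathcal{K}_2},\hat{\mu}_{\mathcal{K}_2}\}$ in Case II.

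For Case I, since $\|e_{\mathcal{K}_2}\|^2=|\mathcal{K}_2|$, subtracting the rank-one projector onto $\mathrm{span}(e_{\mathcal{K}_2})$ yields at once $N_0={\rm{Diag}}(w)-\frac{1}{|\mathcal{K}_2|}e_{\mathcal{K}_2}^m(e_{\mathcal{K}_2}^m)^{\top}$. For Case II(i), I would set $M:=[\,e_{\mathcal{K}_2}^m,\ \hat{\mu}_{\mathcal{K}_2}^m\,]$, compute the Gram matrix $M^{\top}M$ with entries $|\mathcal{K}_2|$, $\hat{\mu}_{\mathcal{K}_2}^{\top}e_{\mathcal{K}_2}$, $\|\hat{\mu}_{\mathcal{K}_2}\|^2$, and observe that $\det(M^{\top}M)=\eta$, so that $\eta\neq0$ is precisely the condition that $e_{\mathcal{K}_2}$ and $\hat{\mu}_{\mathcal{K}_2}$ are linearly independent. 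The orthogonal projector onto $\mathrm{span}(M)$ equals $M(M^{\top}M)^{-1}M^{\top}$, and the asserted formula amounts to the identity $M(M^{\top}M)^{-1}M^{\top}=\frac{1}{\eta}V^{\top}V$ for the stated $2\times m$ matrix $V$. I would verify this by inserting the explicit inverse of the $2\times2$ matrix $M^{\top}M$ and matching the coefficients of $e_{\mathcal{K}_2}^m(e_{\mathcal{K}_2}^m)^{\top}$, $\hat{\mu}_{\mathcal{K}_2}^m(\hat{\mu}_{\mathcal{K}_2}^m)^{\top}$, and the symmetrized cross term; the two rows of $V$ are, up to the scalars $\sqrt{|\mathcal{K}_2|}$ and $\sqrt{\eta/|\mathcal{K}_2|}$, the Gram--Schmidt orthogonalization of $\hat{\mu}_{\mathcal{K}_2}^m$ against $e_{\mathcal{K}_2}^m$, which makes the identity transparent. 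Subtracting from ${\rm{Diag}}(w)$ gives the stated $N_0$. Finally, Case II(ii) is the degenerate branch $\eta=0$: by the Cauchy--Schwarz equality case $\hat{\mu}_{\mathcal{K}_2}$ is proportional to (here, equal to) $e_{\mathcal{K}_2}$, so the return-constraint gradient is redundant on $\mathcal{K}_2$ and $T$ coincides with the Case I subspace, returning the same rank-one formula.

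The main obstacle is twofold. Conceptually, one must argue carefully that the HS-Jacobian representative is built from a \emph{maximal linearly independent} subset of the active gradients; this is exactly what separates the two subcases of Case II and what forces the $\eta=0$ branch to collapse onto the Case I formula rather than produce a singular $(M^{\top}M)^{-1}$. Computationally, the crux is the verification $\frac{1}{\eta}V^{\top}V=M(M^{\top}M)^{-1}M^{\top}$, which requires the explicit $2\times2$ inversion together with careful bookkeeping of the three matrix terms, since a sign or normalization slip there is the easiest way to misstate the formula.
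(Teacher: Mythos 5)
Your proposal is correct, and it is worth noting that the paper itself offers no proof of this theorem at all: it simply states the formulas ``by virtue of the results obtained in'' the authors' earlier work \cite{YJL.WMZ.2023}. Your argument supplies exactly the derivation that citation stands in for: identify the constraints of $\mathcal{C}$ active at $p=\Pi_{\mathcal{C}}(\hat z)$, observe that the designated HS-Jacobian element $N_0$ is the orthogonal projector onto the null space of a maximal linearly independent subset of the active gradients, and compute that projector in closed form. The computational crux checks out: with $M=[\,e_{\mathcal{K}_2}^m,\ \hat{\mu}_{\mathcal{K}_2}^m\,]$, $k=|\mathcal{K}_2|$ and $s=\hat{\mu}_{\mathcal{K}_2}^{\top}e_{\mathcal{K}_2}$, one has $\det(M^{\top}M)=\eta$ and
$M(M^{\top}M)^{-1}M^{\top}=\tfrac{1}{\eta}\bigl[k\,\hat{\mu}^m(\hat{\mu}^m)^{\top}-s\bigl(\hat{\mu}^m(e^m)^{\top}+e^m(\hat{\mu}^m)^{\top}\bigr)+\|\hat{\mu}_{\mathcal{K}_2}\|^2 e^m(e^m)^{\top}\bigr]$,
which is precisely $\tfrac{1}{\eta}V^{\top}V$ for the stated $V$ (indeed both rows of $V$ have norm $\sqrt{\eta}$ and are mutually orthogonal, so $\tfrac{1}{\eta}V^{\top}V$ is manifestly the projector onto ${\rm span}\{e_{\mathcal{K}_2}^m,\hat{\mu}_{\mathcal{K}_2}^m\}$). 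Two small points of care: first, $\eta=0$ only forces $\hat{\mu}_{\mathcal{K}_2}=c\,e_{\mathcal{K}_2}$ for some scalar $c$, not $\hat{\mu}_{\mathcal{K}_2}=e_{\mathcal{K}_2}$ as the theorem's parenthetical claims --- your ``proportional'' reading is the right one, and the conclusion (the span collapses to that of $e_{\mathcal{K}_2}$, returning the Case~I projector) survives for every such $c$, including $c=0$; second, a fully rigorous write-up should quote the precise definition of the HS-Jacobian from Han--Sun to justify that $N_0$ is indeed the projector associated with the full active set, rather than asserting it. Neither point is a gap in substance.
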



\subsubsection{Constraint nondegeneracy condition for problem \eqref{ppadual}}
Let $\left( \widehat{x},u^*\right) \in\mathbb{R}^m\times\mathbb{R}^N$ be an optimal solution pair  of problem \eqref{DR3} and \eqref{ppadual}.  In order to establish local convergence of the semismooth Newton algorithm  for problem \eqref{ppadual}, we need to study the positive definiteness of $N_{\widehat{x}}\in\mathcal{N}_{\mathcal{C}}(\widehat{x})$. Naturally, we have 
$$
\widehat{x}=\Pi_{\mathcal{C}}(x^k+\sigma_kA^{\top}u^*).
$$
For later analysis, we intend to characterize the tangent cone $\mathscr{T}_{\mathcal{C}}(\widehat{x})$ of the set $\mathcal{C}$. 
Denote $\widehat{\mathcal{K}}_1:=\{i\ |\ (\Pi_{\mathcal{C}}(\widehat{x}))_i=0\},\ \widehat{\mathcal{K}}_2:=\{1,\dots,m\}\backslash\widehat{\mathcal{K}}_1$. For the tangent cone $\mathscr{T}_{\mathcal{C}}(\widehat{x})$,  we consider the following two cases:
\begin{itemize}
\item [(1)] If $\hat{\mu}^{\top}\Pi_{\mathcal{C}}(\widehat{x})\neq \rho+\epsilon$, then 
$$
\mathscr{T}_{\mathcal{C}}(\widehat{x}) = \left\lbrace d\in\mathbb{R}^m\mid e^{\top}d = 0,\ d_{\widehat{\mathcal{K}}_1}\geq0\right\rbrace.
$$
The linearity space  of $\mathscr{T}_{\mathcal{C}}(\widehat{x})$ is given by
$$
{\rm{lin}}\left( \mathscr{T}_{\mathcal{C}}(\widehat{x})\right) = \left\lbrace d\in\mathbb{R}^m\mid e^{\top}d = 0, d_{\widehat{\mathcal{K}}_1} = 0\right\rbrace = \left\lbrace d\in\mathbb{R}^m\mid  e_{\widehat{\mathcal{K}}_2}^{\top}d_{\widehat{\mathcal{K}}_2} = 0,d_{\widehat{\mathcal{K}}_1} = 0\right\rbrace.
$$
Moreover, we easily get the orthogonal space of $ {\rm{lin}}\left( \mathscr{T}_{\mathcal{C}}(\widehat{x})\right) $, which is characterized by
$$
\left( {\rm{lin}}\left( \mathscr{T}_{\mathcal{C}}(\widehat{x})\right) \right) ^{\bot}=\left\lbrace h\in\mathbb{R}^m\mid h_{\widehat{\mathcal{K}_2}} = ke_{\widehat{\mathcal{K}_2}},k\in \mathbb{R}\right\rbrace.
$$
\item [(2)] If $\hat{\mu}^{\top}\Pi_{\mathcal{C}}(\widehat{x})=\rho+\epsilon$, then 
$$
\mathscr{T}_{\mathcal{C}}(\widehat{x}) = \left\lbrace d\in\mathbb{R}^m\mid e^{\top}d = 0,\ d_{\widehat{\mathcal{K}}_1}\geq0,\hat{\mu}^{\top}d \geq0\right\rbrace.
$$
The linearity space  of $\mathscr{T}_{\mathcal{C}}(\widehat{x})$ has the following form:
$$
{\rm{lin}}\left( \mathscr{T}_{\mathcal{C}}(\widehat{x})\right)  = \left\lbrace d\in\mathbb{R}^m\mid  e_{\widehat{\mathcal{K}}_2}^{\top}d_{\widehat{\mathcal{K}}_2} = 0,\hat{\mu}_{\widehat{\mathcal{K}}_2}^{\top}d_{\widehat{\mathcal{K}}_2} =0,d_{\widehat{\mathcal{K}}_1} = 0\right\rbrace,
$$
and its  orthogonal space is given by 
$$
\left( {\rm{lin}}\left( \mathscr{T}_{\mathcal{C}}(\widehat{x})\right) \right) ^{\bot}=\left\lbrace h\in\mathbb{R}^m\mid h_{\widehat{\mathcal{K}_2}} = k_1e_{\widehat{\mathcal{K}_2}}+k_2\hat{\mu}_{\widehat{\mathcal{K}_2}}, \ k_1,k_2\in \mathbb{R}\right\rbrace.
$$
\end{itemize}
The constraint nondegeneracy condition \cite{BJF.SA} holds at $\widehat{x}$ for problem \eqref{DR3} if
$$
\left[ \begin{matrix}
A\\
I_m
\end{matrix}\right]  \mathbb{R}^m+\left[ \begin{matrix}
\left\lbrace 0\right\rbrace^N\\
{\rm{lin}}\left(  \mathscr{T}_{\mathcal{C}}(\widehat{x})\right) 
\end{matrix}\right] =\left[ \begin{matrix}
\mathbb{R}^N\\
\mathbb{R}^m
\end{matrix}\right] ,
$$
or equivalently,
\begin{equation}\label{CQ}
A{\rm{lin}}\left( \mathscr{T}_{\mathcal{C}}(\widehat{x})\right) = \mathbb{R}^N.
\end{equation}

Before establishing the connection between the constraint nondegeneracy condition and the positive definiteness of $\mathcal{N}_{\mathcal{C}}(\widehat{x})$, we need to  state the following crucial  lemma.
\begin{lemma}\label{lemma1}
For any $N_{\widehat{x}}\in\mathcal{N}_{\mathcal{C}}(\widehat{x})$ and $h\in\mathbb{R}^m$ such that $N_{\widehat{x}}h=0$, it holds that
$$
h\in \left( {\rm{lin}}\left( \mathscr{T}_{\mathcal{C}}(\widehat{x})\right) \right) ^{\bot}.
$$
\end{lemma}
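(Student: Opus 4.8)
The plan is to argue directly from the explicit formulas for the generalized HS-Jacobian established in Theorem~\ref{thm5}, distinguishing the same cases that appear there. Since $\widehat{x}\in\mathcal{C}$ we have $\Pi_{\mathcal{C}}(\widehat{x})=\widehat{x}$, so the index sets $\mathcal{K}_1,\mathcal{K}_2$ of Theorem~\ref{thm5} evaluated at $\hat z=\widehat x$ coincide with $\widehat{\mathcal{K}}_1,\widehat{\mathcal{K}}_2$, and any $N_{\widehat{x}}\in\mathcal{N}_{\mathcal{C}}(\widehat{x})$ is one of the matrices $N_0$ listed there. The common feature of all these formulas is that $N_0={\rm{Diag}}(w)-C$, where $w$ is the indicator of $\widehat{\mathcal{K}}_2$ and the correction $C$ is supported on $\widehat{\mathcal{K}}_2\times\widehat{\mathcal{K}}_2$ with its range contained in the span of the vectors defining $\left({\rm{lin}}(\mathscr{T}_{\mathcal{C}}(\widehat{x}))\right)^{\bot}$. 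The rows of $N_{\widehat{x}}h=0$ indexed by $\widehat{\mathcal{K}}_1$ are automatically zero, since both ${\rm{Diag}}(w)$ and $C$ vanish there, so they place no restriction on $h_{\widehat{\mathcal{K}}_1}$; reading the equation blockwise then pins $h_{\widehat{\mathcal{K}}_2}$ into the required span.

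I will carry this out case by case. When $\hat{\mu}^{\top}\widehat{x}\neq\rho+\epsilon$, or when $\hat{\mu}^{\top}\widehat{x}=\rho+\epsilon$ with $\eta=0$, Theorem~\ref{thm5} gives $N_0={\rm{Diag}}(w)-\frac{1}{|\widehat{\mathcal{K}}_2|}e_{\widehat{\mathcal{K}}_2}^m(e_{\widehat{\mathcal{K}}_2}^m)^{\top}$. Restricting $N_{\widehat{x}}h=0$ to the rows in $\widehat{\mathcal{K}}_2$ yields $h_i=\frac{1}{|\widehat{\mathcal{K}}_2|}\sum_{j\in\widehat{\mathcal{K}}_2}h_j$ for every $i\in\widehat{\mathcal{K}}_2$; hence $h_{\widehat{\mathcal{K}}_2}$ is a constant multiple of $e_{\widehat{\mathcal{K}}_2}$, which is precisely the description of $\left({\rm{lin}}(\mathscr{T}_{\mathcal{C}}(\widehat{x}))\right)^{\bot}$ in these cases (note that $\eta=0$ forces $\hat{\mu}_{\widehat{\mathcal{K}}_2}=e_{\widehat{\mathcal{K}}_2}$, so the two spanning vectors collapse to one).

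In the remaining case $\hat{\mu}^{\top}\widehat{x}=\rho+\epsilon$ with $\eta\neq0$, I write the rank-two correction as $C=\frac{1}{\eta}\left(b_1b_1^{\top}+b_2b_2^{\top}\right)$, where $b_1,b_2$ are the two rows appearing in Theorem~\ref{thm5}. The key observation is that $b_2$ is a nonzero multiple of $e_{\widehat{\mathcal{K}}_2}^m$ (its coefficient squared equals $\eta/|\widehat{\mathcal{K}}_2|>0$), while $b_1$ carries a nonzero component along $\hat{\mu}_{\widehat{\mathcal{K}}_2}^m$, so $\{b_1,b_2\}$ spans the same plane as $\{e_{\widehat{\mathcal{K}}_2}^m,\hat{\mu}_{\widehat{\mathcal{K}}_2}^m\}$; here Cauchy--Schwarz guarantees that $e_{\widehat{\mathcal{K}}_2}$ and $\hat{\mu}_{\widehat{\mathcal{K}}_2}$ are independent exactly when $\eta\neq0$. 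Restricting $N_{\widehat{x}}h=0$ to the rows in $\widehat{\mathcal{K}}_2$ gives $h_{\widehat{\mathcal{K}}_2}=\frac{1}{\eta}\big[(b_1)_{\widehat{\mathcal{K}}_2}(b_1^{\top}h)+(b_2)_{\widehat{\mathcal{K}}_2}(b_2^{\top}h)\big]$, a linear combination of $(b_1)_{\widehat{\mathcal{K}}_2}$ and $(b_2)_{\widehat{\mathcal{K}}_2}$, and therefore $h_{\widehat{\mathcal{K}}_2}\in{\rm{span}}\{e_{\widehat{\mathcal{K}}_2},\hat{\mu}_{\widehat{\mathcal{K}}_2}\}$. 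This matches the second description of $\left({\rm{lin}}(\mathscr{T}_{\mathcal{C}}(\widehat{x}))\right)^{\bot}$, completing the argument.

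I would expect no genuine analytic obstacle here; the work is essentially verification and bookkeeping. The one point requiring care is the block structure indexed by $\widehat{\mathcal{K}}_1$ and $\widehat{\mathcal{K}}_2$, together with checking that the range of the correction term $C$ coincides with the span generating $\left({\rm{lin}}(\mathscr{T}_{\mathcal{C}}(\widehat{x}))\right)^{\bot}$ in each case, so that the equation $N_{\widehat{x}}h=0$ reduces to the membership $h_{\widehat{\mathcal{K}}_2}\in{\rm{span}}\{e_{\widehat{\mathcal{K}}_2},\hat{\mu}_{\widehat{\mathcal{K}}_2}\}$ claimed above.
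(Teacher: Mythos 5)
Your argument is correct, and it takes a different route from the paper's. The paper proves the lemma through the quadratic form: it observes that $N_{\widehat{x}}h=0$ forces $\langle h,N_{\widehat{x}}h\rangle=0$, writes this as $\|h_{\mathcal{K}_2}\|^2-\tfrac{1}{|\mathcal{K}_2|}(h_{\mathcal{K}_2}^{\top}e_{\mathcal{K}_2})^2=0$ in the first case (so that the equality case of Cauchy--Schwarz gives $h_{\mathcal{K}_2}=ke_{\mathcal{K}_2}$), and in the case $\hat{\mu}^{\top}\Pi_{\mathcal{C}}(\widehat{x})=\rho+\epsilon$, $\eta\neq0$ runs an eigenvalue estimate on $\tilde{A}\tilde{A}^{\top}$ to identify when $\langle h,N_{\widehat{x}}h\rangle$ vanishes. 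You instead read the linear system $N_{\widehat{x}}h=0$ blockwise: the $\widehat{\mathcal{K}}_1$ rows are vacuous, and the $\widehat{\mathcal{K}}_2$ rows give $h_{\widehat{\mathcal{K}}_2}=(Ch)_{\widehat{\mathcal{K}}_2}$, which lands in the range of the rank-one or rank-two correction $C$, hence in $\mathrm{span}\{e_{\widehat{\mathcal{K}}_2}\}$ or $\mathrm{span}\{e_{\widehat{\mathcal{K}}_2},\hat{\mu}_{\widehat{\mathcal{K}}_2}\}$. Your route only needs the inclusion $\mathrm{range}(C)\subseteq\mathrm{span}\{e_{\widehat{\mathcal{K}}_2},\hat{\mu}_{\widehat{\mathcal{K}}_2}\}$, which is immediate from the explicit formulas in Theorem~\ref{thm5}, and it sidesteps the paper's eigenvalue discussion entirely (including the paper's somewhat overstated claim that $\langle h,N_{\widehat{x}}h\rangle=0$ pins $h_{\mathcal{K}_2}$ down to a single specific vector rather than a two-dimensional span); in that sense your version is cleaner and a little more robust. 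The one point you are right to flag, and which you handle adequately, is the identification $\Pi_{\mathcal{C}}(\widehat{x})=\widehat{x}$ so that the index sets of Theorem~\ref{thm5} agree with $\widehat{\mathcal{K}}_1,\widehat{\mathcal{K}}_2$; the paper glosses over this same notational issue.
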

\begin{proof}
Let $N_{\widehat{x}}\in\mathcal{N}_{\mathcal{C}}(\widehat{x})$ and $h\in\mathbb{R}^m$ satisfy $N_{\widehat{x}}h=0$. Recalling  Theorem \ref{thm5}, we take the following cases into account:
\begin{itemize}
	\item [(1)]  If $\hat{\mu}^{\top}\Pi_{\mathcal{C}}(\widehat{x})\neq \rho+\epsilon$, then 
	$$
	0=\left\langle h,N_{\widehat{x}}h\right\rangle =\|h_{\mathcal{K}_2}\|^2-\frac{1}{|\mathcal{K}_2|}(h^{\top}_{\mathcal{K}_2}e_{\mathcal{K}_2})^2.
	$$
	It is clear that $\|h_{\mathcal{K}_2}\|^2-\frac{1}{|\mathcal{K}_2|}(h^{\top}_{\mathcal{K}_2}e_{\mathcal{K}_2})^2=0$ holds if and only if $h_{\mathcal{K}_2}=ke_{\mathcal{K}_2},k\in\mathbb{R},$ which implies $h\in \left( {\rm{lin}}\left( \mathscr{T}_{\mathcal{C}}(\widehat{x})\right) \right) ^{\bot}.
	$
	\item[(2)]  If $\hat{\mu}^{\top}\Pi_{\mathcal{C}}(\hat{y})= \rho+\epsilon$, then we consider cases {\bf{(\romannumeral1)}} and {\bf{(\romannumeral2)}} for Case {\bf{\uppercase\expandafter{\romannumeral1}}} of Theorem \ref{thm5}. For case {\bf{(\romannumeral1)}}, we denote $$
	\tilde{A}=\begin{bmatrix}
		-\sqrt{|\mathcal{K}_2|}\hat{\mu}_{\mathcal{K}_2}^m+\frac{\hat{\mu}_{\mathcal{K}_2}^{\top}{e}_{\mathcal{K}_2}}{\sqrt{|\mathcal{K}_2|}}e_{\mathcal{K}_2}^m& \sqrt{\|\hat{\mu}_{\mathcal{K}_2}\|^2-\frac{(\hat{\mu}_{\mathcal{K}_2}^{\top}{e}_{\mathcal{K}_2}^2)}{|{\mathcal{K}_2}|}}e_{\mathcal{K}_2}^m
	\end{bmatrix}.
	$$
	Since $N_{\widehat{x}}$ has an eigenvalue of $1$ or $0$, we know that $\tilde{A}\tilde{A}^{\top}$ has an eigenvalue of $\eta$ or $0$
	Then we have 
	$$
	\begin{aligned}
		\left\langle h,N_{\widehat{x}}h\right\rangle&=\|h_{\mathcal{K}_2}\|^2
		-\frac{1}{\eta}h^{\top}_{\mathcal{K}_2}\tilde{A}\tilde{A}^{\top}h_{\mathcal{K}_2}\\
		&\geq \|h_{\mathcal{K}_2}\|^2-\frac{1}{\eta}\|\tilde{A}\|^2_2\|h_{\mathcal{K}_2}\|^2=\|h_{\mathcal{K}_2}\|^2-\frac{1}{\eta}\widetilde{\lambda}_{max}(\tilde{A}\tilde{A}^{\top})\|h_{\mathcal{K}_2}\|^2=0.
	\end{aligned}
	$$
	Thus, $\left\langle h,N_{\widehat{x}}h\right\rangle=0$ if and only if 
	$$ 
	d_{\mathcal{K}_2}=\frac{1}{\sqrt{2\eta}}\left[ -\sqrt{|\mathcal{K}_2|}\hat{\mu}_{\mathcal{K}_2}+\frac{\hat{\mu}_{\mathcal{K}_2}^{\top}{e}_{\mathcal{K}_2}}{\sqrt{|\mathcal{K}_2|}}e_{\mathcal{K}_2}\right]+\frac{1}{\sqrt{2\eta}}\left[ \sqrt{\|\hat{\mu}_{\mathcal{K}_2}\|^2-\frac{(\hat{\mu}_{\mathcal{K}_2}^{\top}{e}_{\mathcal{K}_2}^2)}{|{\mathcal{K}_2}|}}e_{\mathcal{K}_2}\right],
	$$
	which means that
	$h\in \left( {\rm{lin}}\left( \mathscr{T}_{\mathcal{C}}(\widehat{x})\right) \right) ^{\bot}.
	$
	For case {\bf{(\romannumeral1)}},   we  know from (1) that $ h\in \left( {\rm{lin}}\left( \mathscr{T}_{\mathcal{C}}(\widehat{x})\right) \right) ^{\bot}.$
	Here, we complete the proof.
\end{itemize}
\end{proof}

\begin{proposition}\label{CQSD}
Let $\left( \widehat{x},u^*\right) \in\mathbb{R}^m\times\mathbb{R}^N$ be an optimal solution pair  of problem \eqref{DR3} and \eqref{ppadual} with $\widehat{x}=\Pi_{\mathcal{C}}(x^k+\sigma_kA^{\top}u^*)$.  Then the following conditions are equivalent:
\begin{itemize}
	\item [(a)] The constraint nondegeneracy condition \eqref{CQ} holds at $\widehat{x}$;
	
	\item [(b)] $AN_{\widehat{x}}A^{\top}\in A\mathcal{N}_{\mathcal{C}}(\widehat{x})A^{\top}$ is symmetric and positive definite;
	
	\item [(c)]$AN_0A^{\top}\in A\mathcal{N}_{\mathcal{C}}(\widehat{x})A^{\top}$ is symmetric and positive definite.
\end{itemize}
\end{proposition}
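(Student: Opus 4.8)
The plan is to reduce all three conditions to a single statement about the kernel of $N_{\widehat{x}}$. The key structural observation is that every element $N_{\widehat{x}}\in\mathcal{N}_{\mathcal{C}}(\widehat{x})$, and in particular $N_0$, is the orthogonal projection onto the linearity space ${\rm{lin}}(\mathscr{T}_{\mathcal{C}}(\widehat{x}))$; that is, it is symmetric, idempotent and positive semidefinite with ${\rm{range}}(N_{\widehat{x}})={\rm{lin}}(\mathscr{T}_{\mathcal{C}}(\widehat{x}))$ and $\ker(N_{\widehat{x}})=({\rm{lin}}(\mathscr{T}_{\mathcal{C}}(\widehat{x})))^{\bot}$. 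The inclusion $\ker(N_{\widehat{x}})\subseteq({\rm{lin}}(\mathscr{T}_{\mathcal{C}}(\widehat{x})))^{\bot}$ is exactly Lemma \ref{lemma1}. For the reverse inclusion I would substitute an arbitrary $h\in({\rm{lin}}(\mathscr{T}_{\mathcal{C}}(\widehat{x})))^{\bot}$ into the explicit formulas of Theorem \ref{thm5}, checking the case $\hat{\mu}^{\top}\Pi_{\mathcal{C}}(\widehat{x})\neq\rho+\epsilon$ and the two sub-cases $\eta\neq0$ and $\eta=0$ of $\hat{\mu}^{\top}\Pi_{\mathcal{C}}(\widehat{x})=\rho+\epsilon$. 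In each case $h_{\widehat{\mathcal{K}}_2}$ lies in the span of $e_{\widehat{\mathcal{K}}_2}$ (and $\hat{\mu}_{\widehat{\mathcal{K}}_2}$), which is precisely the range of the rank-one (respectively rank-two) correction subtracted from ${\rm{Diag}}(w)$, so $N_{\widehat{x}}h=0$. Together with Lemma \ref{lemma1} this pins down $\ker(N_{\widehat{x}})=({\rm{lin}}(\mathscr{T}_{\mathcal{C}}(\widehat{x})))^{\bot}$ exactly.

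Next I would establish a clean algebraic equivalence valid for any symmetric idempotent positive semidefinite $N$. Since $N=N^{\top}=N^2$, for every $v\in\mathbb{R}^N$ we have $v^{\top}ANA^{\top}v=\|NA^{\top}v\|^2$, so $ANA^{\top}$ is automatically symmetric and positive semidefinite, and $ANA^{\top}\succ0$ if and only if $NA^{\top}v=0$ forces $v=0$, i.e. if and only if $\{v:\,A^{\top}v\in\ker N\}=\{0\}$. Using $\ker N=({\rm{lin}}(\mathscr{T}_{\mathcal{C}}(\widehat{x})))^{\bot}$ from the previous step, the condition $A^{\top}v\in({\rm{lin}}(\mathscr{T}_{\mathcal{C}}(\widehat{x})))^{\bot}$ is equivalent to $\langle v,Ad\rangle=\langle A^{\top}v,d\rangle=0$ for all $d\in{\rm{lin}}(\mathscr{T}_{\mathcal{C}}(\widehat{x}))$, i.e. to $v\in(A\,{\rm{lin}}(\mathscr{T}_{\mathcal{C}}(\widehat{x})))^{\bot}$. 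Hence $ANA^{\top}\succ0$ holds if and only if $(A\,{\rm{lin}}(\mathscr{T}_{\mathcal{C}}(\widehat{x})))^{\bot}=\{0\}$, which is precisely $A\,{\rm{lin}}(\mathscr{T}_{\mathcal{C}}(\widehat{x}))=\mathbb{R}^N$, the constraint nondegeneracy condition \eqref{CQ} in (a).

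Finally I would assemble the equivalences: applying the algebraic equivalence of the second paragraph with $N=N_{\widehat{x}}$ yields (a)$\Leftrightarrow$(b), and applying it with $N=N_0$ yields (a)$\Leftrightarrow$(c), while symmetry of $AN_{\widehat{x}}A^{\top}$ and $AN_0A^{\top}$ is immediate from the symmetry of $N_{\widehat{x}}$ and $N_0$; this closes the chain (b)$\Leftrightarrow$(a)$\Leftrightarrow$(c). The main obstacle is the structural first step: one must verify from the explicit Theorem \ref{thm5} formulas that $\ker(N_{\widehat{x}})$ equals $({\rm{lin}}(\mathscr{T}_{\mathcal{C}}(\widehat{x})))^{\bot}$ rather than merely being contained in it. Lemma \ref{lemma1} supplies one inclusion, and the reverse requires the routine but case-dependent substitution sketched above (together with the orthogonality and norm computations showing that the subtracted correction is itself the orthogonal projection onto ${\rm{span}}\{e_{\widehat{\mathcal{K}}_2},\hat{\mu}_{\widehat{\mathcal{K}}_2}\}$). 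Once the kernel is fixed, the positive-definiteness equivalences follow purely from the rank and kernel bookkeeping of the previous paragraph, with no further analysis.
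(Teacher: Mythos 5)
Your proposal is correct and rests on the same two ingredients as the paper's proof: the identification of $\ker(N_{\widehat{x}})$ with $\left({\rm{lin}}\left(\mathscr{T}_{\mathcal{C}}(\widehat{x})\right)\right)^{\bot}$ via Lemma \ref{lemma1} and the explicit formulas of Theorem \ref{thm5}, and the projector inequality $\langle d,N_{\widehat{x}}d\rangle\geq\|N_{\widehat{x}}d\|^2$ that converts $AN_{\widehat{x}}A^{\top}h=0$ into $N_{\widehat{x}}A^{\top}h=0$. The difference is organizational, and it slightly changes what must be verified. The paper runs the cycle $(a)\Rightarrow(b)\Rightarrow(c)\Rightarrow(a)$: the step $(a)\Rightarrow(b)$ only needs the \emph{one} inclusion $\ker(N_{\widehat{x}})\subseteq\left({\rm{lin}}\left(\mathscr{T}_{\mathcal{C}}(\widehat{x})\right)\right)^{\bot}$ for an arbitrary element (this is exactly Lemma \ref{lemma1}), $(b)\Rightarrow(c)$ is free, and the reverse inclusion is needed only for the single concrete element $N_0$ in the contrapositive step $(c)\Rightarrow(a)$. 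Your single biconditional ``$ANA^{\top}\succ0\iff$ \eqref{CQ}'', applied separately with $N=N_{\widehat{x}}$ and $N=N_0$, requires the \emph{two-sided} kernel identity $\ker(N)=\left({\rm{lin}}\left(\mathscr{T}_{\mathcal{C}}(\widehat{x})\right)\right)^{\bot}$ for \emph{every} element of $\mathcal{N}_{\mathcal{C}}(\widehat{x})$, i.e.\ that all elements of the generalized HS-Jacobian are the orthogonal projector onto ${\rm{lin}}\left(\mathscr{T}_{\mathcal{C}}(\widehat{x})\right)$. Theorem \ref{thm5} only exhibits the particular element $N_0$, so this uniformity is a genuinely stronger structural claim than the paper uses; it is plausible here (and your suggested verification that the subtracted rank-one or rank-two correction is the orthogonal projector onto ${\rm{span}}\{e_{\widehat{\mathcal{K}}_2},\hat{\mu}_{\widehat{\mathcal{K}}_2}\}$ would establish it for $N_0$), but if you want to avoid proving it for arbitrary elements you should fall back on the paper's cyclic ordering, which buys exactly that economy. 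What your version buys in exchange is a cleaner statement — positive definiteness of $ANA^{\top}$ is surjectivity of $A$ restricted to the range of $N$ — and it repairs a looseness in the paper's $(c)\Rightarrow(a)$ step, where a specific $d_{\mathcal{K}_2}$ is ``chosen'' even though $d=A^{\top}h$ is determined; your observation that the correction term annihilates the whole of $\left({\rm{lin}}\left(\mathscr{T}_{\mathcal{C}}(\widehat{x})\right)\right)^{\bot}$ is the right way to close that case.
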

\begin{proof}
$``(a)\Rightarrow (b)"$. Let $N_{\widehat{x}}$ be an arbitrary element in $\mathcal{N}_{\mathcal{C}}(\widehat{x})$. Let $h\in\mathbb{R}^m$ such that $AN_{\widehat{x}}A^{\top}h=0$. Obviously, $AN_{\widehat{x}}A^{\top}$ is symmetric. Due to\cite[Proposition 2.1]{JKF.SDF.TKC.2012}, we have 
$$
0=\langle  h, AN_{\widehat{x}}A^{\top}h\rangle  =\langle A^{\top}h, N_{\widehat{x}}A^{\top}h\rangle \geq \langle N_{\widehat{x}}A^{\top}h, N_{\widehat{x}}A^{\top}h\rangle ,
$$
which implies $N_{\widehat{x}}A^{\top}h=0$. By virtue of Lemma \ref{lemma1},  we have $A^{\top}h\in\left( {\rm{lin}}\left( \mathscr{T}_{\mathcal{C}}(\widehat{x})\right) \right) ^{\bot}.$ Since the constraint nondegeneracy condition \eqref{CQ} holds at $\widehat{x}$, there exists $x\in {\rm{lin}}\left( \mathscr{T}_{\mathcal{C}}(\widehat{x})\right) $ such that $Ax = h$. Then we obtain 
$$
\left\langle h,h\right\rangle =\langle  h, Ax \rangle =\langle  A^{\top}h,x
\rangle =0.
$$
Thus, we have $h=0$. Therefore, $AN_{\widehat{x}}A^{\top}$ is positive definite.

$``(b)\Rightarrow (c)".$ This is obviously true since $AN_0A^{\top}\in A\mathcal{N}_{\mathcal{C}}(\widehat{x})A^{\top}.$ 

$``(c)\Rightarrow (a)".$ Assume that the constraint nondegeneracy condition \eqref{CQ} does not hold at $\widehat{x}$, i.e., 
$$
\left( A{\rm{lin}}\left( \mathscr{T}_{\mathcal{C}}(\widehat{x})\right) \right) ^{\bot}\neq \left\lbrace 0\right\rbrace^N.
$$
Let $0\neq h\in\left( A{\rm{lin}}\left( \mathscr{T}_{\mathcal{C}}(\widehat{x})\right) \right) ^{\bot}$. Then 
$$
\left\langle h,Ax\right\rangle=\langle A^{\top}h,x\rangle  =0,\ \forall x\in{\rm{lin}}\left( \mathscr{T}_{\mathcal{C}}(\widehat{x})\right).
$$
Denote $d := A^{\top}h$. Then we have $d\in\left( {\rm{lin}}\left( \mathscr{T}_{\mathcal{C}}(\widehat{x})\right)\right) ^{\bot}.$
If $\hat{\mu}^{\top}\Pi_{\mathcal{C}}(\widehat{x})\neq \rho+\epsilon$, then $d_{\mathcal{K}_2}=ke_{\mathcal{K}_2},k\in\mathbb{R}.$ Thus we have $N_{\widehat{x}}d=0.$
If $\hat{\mu}^{\top}\Pi_{\mathcal{C}}(\widehat{x})=\rho+\epsilon$ and $	\eta=\|\hat{\mu}_{\mathcal{K}_2}\|^2|\mathcal{K}_2|-(\hat{\mu}_{\mathcal{K}_2}^{\top}{e}_{\mathcal{K}_2})^2\neq0$ , then without loss of generality,
we choose 
$$
d_{\mathcal{K}_2}=\frac{1}{\sqrt{2\eta}}\left[ -\sqrt{|\mathcal{K}_2|}\hat{\mu}_{\mathcal{K}_2}+\frac{\hat{\mu}_{\mathcal{K}_2}^{\top}{e}_{\mathcal{K}_2}}{\sqrt{|\mathcal{K}_2|}}e_{\mathcal{K}_2}\right]+\frac{1}{\sqrt{2\eta}}\left[ \sqrt{\|\hat{\mu}_{\mathcal{K}_2}\|^2-\frac{(\hat{\mu}_{\mathcal{K}_2}^{\top}{e}_{\mathcal{K}_2}^2)}{|{\mathcal{K}_2}|}}e_{\mathcal{K}_2}\right]
$$
in $\left( {\rm{lin}}\left( \mathscr{T}_{\mathcal{C}}(\widehat{x})\right)\right) ^{\bot}$.
Similarly, we get $N_{\widehat{x}}d=0.$
Thus, 
$$
\left\langle d,N_{\widehat{x}}d\right\rangle =\langle A^{\top}h,N_{\widehat{x}} A^{\top}h \rangle =\langle h,AN_{\widehat{x}}A^{\top}h\rangle=0.
$$
Since $AN_{\widehat{x}}A^{\top}$ is positive definite, it follows that $h=0$, which  contradicts to the assumption that $h\neq0$. Here, we know that (a) holds.
\end{proof}

\subsubsection{A semismooth Newton algorithm and its convergence}
Since ${\rm{Prox}}_{\frac{\gamma_k}{2N}\|\cdot\|_1}(\cdot)$ and $\Pi_{\mathcal{C}}(\cdot)$ are strongly semismooth, $\nabla\psi(\cdot)$ is strongly semismooth. Thus, the semismooth Newton algorithm can be adapted to problem \eqref{ppadual}, which is  depicted in Algorithm \ref{Alg:2}.
\begin{algorithm}[H]
\caption{ {({\sc Ssn}) A semismooth Newton algorithm for solving problem \eqref{ppadual} }}\label{Alg:2}	
\begin{algorithmic}[1]
	\Require
	$\vartheta\in(0,1/2),\bar \tau\in (0,1]$, $\tilde{\upsilon}_1\in\left( 1,+\infty\right) $ and $\bar\varrho,\varrho,{\tilde{\upsilon}}_2\in(0,1)$. Given an initial point ${u^0\in\mathbb{R}^N}$ and set  $j=0$.
	\Ensure The approximate solution $u^{j+1}$ of problem \eqref{ppadual}.
	\State Choose $N_j\in\mathcal{N}_{\mathcal{C}}(\hat{z}^j)$, $U_j\in\partial {\rm{Prox}}_{\frac{\gamma_k}{2N}\|\cdot\|_1}(\tilde{z}^j)$ and $\epsilon_j:=\tilde{\upsilon}_1\min\left\lbrace \tilde{\upsilon}_2,\|\nabla\psi_k(u^j)\|\right\rbrace$. Denote $\mathcal{V}_j:=\sigma AN_jA^{\top}+\gamma U_j$. Solve the following linear system 
	\begin{equation}\label{newton}
		\begin{split}
			(\mathcal{V}_j+\epsilon_jI_N)d= -\nabla\psi_k(u^{j})
		\end{split}
	\end{equation}
	by the direct method or the conjugate gradient method such that the approximate solution $d^j\in\mathbb{R}^m$ satisfies
	$$
	\|(\mathcal{V}_j+\epsilon_j I_N)d^j+\nabla\psi_k(u^{j})\|\leq \min(\bar \varrho,\|\nabla\psi_k(u^{j})\|^{1+\bar\tau}).
	$$
	\State Set $\alpha_j=\varrho^{\bar{c}_j}$, where $\bar{c}_j$ is the smallest nonnegative integer $\bar{c}$ such that  
	$$
	\psi_k(u^{j}+\varrho^{\bar{c}}d^j)\leq\psi_k(u^{j})+\vartheta\varrho^{\bar{c}}\langle\nabla\psi_k(u^{j}),d^j\rangle.
	$$
	\State Update $u^{j+1}=u^{j}+\alpha_jd^j,\ j\gets j+1$,  go to step $1$.
\end{algorithmic}
\end{algorithm}
Since $\mathcal{V}_j$ is positive semidefinite,  $\mathcal{V}_j+\epsilon_jI_N$ is always positive definite as long as $\|\nabla\psi_k(u^j)\|\neq 0$. 
As stated in  \cite[Theorem 3.4 and 3.5]{XYZ.DFS.KCT.2010}, the convergence result of Algorithm \ref{Alg:2} can be described below.

\begin{theorem}
Let $u^*$ be an accumulation point of the infinite sequence $\{u^j\}$  generated by {\sc Ssn} algorithm. Assume that the constraint nondegeneracy condition \eqref{CQ} holds at $\widehat{x}:=\Pi_{\mathcal{C}}(x^k+\sigma_kA^{\top}u^*)$.  Then the sequence $\{u^j\}$ converges to the unique optimal solution ${u}^*$ of problem \eqref{ppadual}. Moreover, the rate  of convergence is at least superlinear with
$$
\|u^{j+1}-{u}^*\|=\mathcal{O}(\|u^j-{u}^*\|^{1+\bar{\tau}}),
$$
where $\bar{\tau}$ is given in the {\sc Ssn} algorithm.
\end{theorem}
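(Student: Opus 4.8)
The plan is to combine the global convergence theory of the line-search semismooth Newton method with the nonsingularity of the generalized Jacobian at the limit point, the latter being supplied by the constraint nondegeneracy condition through Proposition \ref{CQSD}. First I would establish global convergence. Since $\psi_k$ is convex and continuously differentiable and $\mathcal{V}_j+\epsilon_jI_N$ is positive definite whenever $\|\nabla\psi_k(u^j)\|\neq 0$ (because $\mathcal{V}_j\succeq 0$ and $\epsilon_j>0$), the direction $d^j$ obtained from \eqref{newton} is a descent direction, so the Armijo rule in Step $2$ is well defined and the sequence $\{\psi_k(u^j)\}$ is monotonically decreasing. A standard argument then shows that any accumulation point $u^*$ of $\{u^j\}$ is a stationary point, i.e. $\nabla\psi_k(u^*)=0$; by convexity, $u^*$ is a global minimizer of \eqref{ppadual}.

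Next I would prove nonsingularity of the generalized Jacobian at $u^*$. Setting $\widehat{x}=\Pi_{\mathcal{C}}(x^k+\sigma_kA^{\top}u^*)$, the assumed constraint nondegeneracy condition \eqref{CQ} together with Proposition \ref{CQSD} guarantees that $AN_{\widehat{x}}A^{\top}$ is symmetric and positive definite for every $N_{\widehat{x}}\in\mathcal{N}_{\mathcal{C}}(\widehat{x})$. Since every $U$ in the Clarke subdifferential of ${\rm{Prox}}_{\frac{\gamma_k}{2N}\|\cdot\|_1}(\cdot)$ at the corresponding argument is a diagonal matrix with entries in $\{0,1\}$, hence positive semidefinite, each element $\mathcal{V}^*=\sigma AN_{\widehat{x}}A^{\top}+\gamma U$ of $\hat{\partial}^2\psi(u^*)$ is positive definite and therefore nonsingular. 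This is precisely the regularity required to trigger the fast local phase.

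Finally I would invoke the local convergence theory of the semismooth Newton method as stated in \cite{XYZ.DFS.KCT.2010}. Because ${\rm{Prox}}_{\frac{\gamma_k}{2N}\|\cdot\|_1}(\cdot)$ and $\Pi_{\mathcal{C}}(\cdot)$ are strongly semismooth, $\nabla\psi_k$ is strongly semismooth, and the nonsingularity established above yields, for $u^j$ in a neighborhood of $u^*$, the local estimate $\|u^j+d^j-u^*\|=\mathcal{O}(\|u^j-u^*\|^{1+\bar{\tau}})$, where the inexactness tolerance $\|\nabla\psi_k(u^j)\|^{1+\bar{\tau}}$ imposed in Step $1$ contributes the exponent $1+\bar{\tau}$. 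A routine argument shows that near such a nonsingular semismooth zero the unit step $\alpha_j=1$ eventually passes the Armijo test, so the full Newton step is taken asymptotically; combined with the global convergence of the first stage, the whole sequence $\{u^j\}$ converges to $u^*$ at the stated superlinear rate, and $u^*$ is unique because the nonsingular Jacobian makes it an isolated minimizer of the convex problem \eqref{ppadual}.

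I expect the main obstacle to be the passage from \emph{accumulation point} to convergence of the entire sequence with unit step length eventually accepted, since this requires matching the neighborhood on which the local estimate holds with the one on which sufficient decrease forces $\alpha_j=1$. The delicate structural point is that the Jacobian used in \eqref{newton} is the \emph{generalized HS-Jacobian} $\mathcal{N}_{\mathcal{C}}$ of Theorem \ref{thm5} rather than the Clarke generalized Jacobian of $\Pi_{\mathcal{C}}(\cdot)$, so one must verify that the HS-Jacobian is an admissible Newton-derivative substitute in the semismooth framework, which is exactly what the construction of Han and Sun \cite{HJY.SDF.1997} provides.
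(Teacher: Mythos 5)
Your proposal is correct and follows essentially the same route as the paper, which simply invokes Theorems 3.4 and 3.5 of \cite{XYZ.DFS.KCT.2010}: global convergence from the Armijo line search on the convex, continuously differentiable $\psi_k$, positive definiteness of $\sigma AN_{\widehat{x}}A^{\top}+\gamma U$ at the limit via Proposition \ref{CQSD}, and the strongly semismooth local analysis giving the $1+\bar{\tau}$ rate. You have in effect filled in the details the paper delegates to that reference, including the correct identification of the HS-Jacobian admissibility issue.
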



\subsubsection{Efficient implementations of the linear system \eqref{newton}}
As we can observe, the most computationally expensive step in semismooth Newton algorithm  is to solve linear systems.
In the following we shall show an efficient implementation of the semismooth Newton method, which is achieved by fully exploiting the sparsity of the generalized Jacobian to efficiently reduce the computational cost.

Given $\sigma,\gamma,\epsilon_j>0$ and $(u,\tilde{x})\in\mathbb{R}^N\times\mathbb{R}^m.$ Choosing $N\in\mathcal{N}_{\mathcal{C}}(\hat{z})$ with $\hat{z}=\tilde{x}+\sigma A^{\top}u$ and $U\in\partial {\rm{Prox}}_{\frac{\gamma}{2N}\|\cdot\|_1}(\tilde{z})$ with $\tilde{z}=A\tilde{x}-\epsilon e-\gamma(\frac{1}{2N}e+u)$, we consider the following  linear system:
\begin{equation}\label{linear}
(\epsilon_jI_N+\gamma U+\sigma ANA^{\top})d=-\nabla\psi(u).
\end{equation}

The cost of naively computing $ANA^{\top}$ and $ANA^{\top}d$ are $O(m^2N+N^2m)$ and $O(Nm+m^2)$, respectively. Therefore, when $m$ and $N$ are large, it is expensive to compute the matrix $\epsilon_jI_N+\gamma U+\sigma ANA^{\top}$ directly. For large scale linear systems, Cholesky decomposition and conjugate gradient methods are not desirable. In our implementation, we choose $N=N_0$ obtained in Theorem \ref{thm5} and reduce the cost by exploiting the special structure of $N_0$.

Thanks to Theorem \ref{thm5}, when $\hat{\mu}^{\top}\Pi_{\mathcal{C}}(\hat{z})\neq \rho+\epsilon$, we have 
$$
{A}N{A}^{\top}={A}\left[{\rm{Diag}}({w})-\frac{1}{|\mathcal{K}_2|}{e}_{\mathcal{K}_2}^n({e}_{\mathcal{K}_2}^n)^{\top}\right] {A}^{\top}={A}_{\mathcal{K}_2}{A}_{\mathcal{K}_2}^{\top}-\frac{1}{|\mathcal{K}_2|}({A}_{\mathcal{K}_2}e_{\mathcal{K}_2})({A}_{\mathcal{K}_2}e_{\mathcal{K}_2})^{\top},
$$
where ${A}_{\mathcal{K}_2}$ is the submatrix of ${A}$ by extracting those columns with indices in $\mathcal{K}_2$. 
Recalling Case {\bf{\uppercase\expandafter{\romannumeral2}}} from Theorem \ref{thm5}. When $\eta=\|\hat{\mu}_{\mathcal{K}_2}\|^2|\mathcal{K}_2|-(\hat{\mu}_{\mathcal{K}_2}^{\top}{e}_{\mathcal{K}_2})^2\neq 0$, we know
$$
\begin{aligned}
&{A}N{A}^{\top}=
{A}_{\mathcal{K}_2}{A}_{\mathcal{K}_2}^{\top}\\
&{\small-\frac{A_{\mathcal{K}_2}}{\eta}\begin{bmatrix}
		-\sqrt{|\mathcal{K}_2|}(\hat{\mu}_{\mathcal{K}_2})^{\top}+\frac{\hat{\mu}_{\mathcal{K}_2}^{\top}{e}_{\mathcal{K}_2}}{\sqrt{|\mathcal{K}_2|}}(e_{\mathcal{K}_2})^{\top}\\
		\sqrt{\|\hat{\mu}_{\mathcal{K}_2}\|^2-\frac{(\hat{\mu}_{\mathcal{K}_2}^{\top}{e}_{\mathcal{K}_2}^2)}{|{\mathcal{K}_2}|}}(e_{\mathcal{K}_2})^{\top}
	\end{bmatrix}^{\top}\begin{bmatrix}
		-\sqrt{|\mathcal{K}_2|}(\hat{\mu}_{\mathcal{K}_2})^{\top}+\frac{\hat{\mu}_{\mathcal{K}_2}^{\top}{e}_{\mathcal{K}_2}}{\sqrt{|\mathcal{K}_2|}}(e_{\mathcal{K}_2})^{\top}\\
		\sqrt{\|\hat{\mu}_{\mathcal{K}_2}\|^2-\frac{(\hat{\mu}_{\mathcal{K}_2}^{\top}{e}_{\mathcal{K}_2}^2)}{|{\mathcal{K}_2}|}}(e_{\mathcal{K}_2})^{\top}
	\end{bmatrix}A^{\top}_{\mathcal{K}_2}.}
\end{aligned}
$$
On the other hand, when $\eta=\|\hat{\mu}_{\mathcal{K}_2}\|^2|\mathcal{K}_2|-(\hat{\mu}_{\mathcal{K}_2}^{\top}{e}_{\mathcal{K}_2})^2= 0$, we have 
$$
{A}N{A}^{\top}={A}_{\mathcal{K}_2}{A}_{\mathcal{K}_2}^{\top}-\frac{1}{|\mathcal{K}_2|}({A}_{\mathcal{K}_2}e_{\mathcal{K}_2})({A}_{\mathcal{K}_2}e_{\mathcal{K}_2})^{\top}.
$$

Based on the above calculation,  the cost of computing ${A}N{A}^{\top}$ and ${A}N{A}^{\top}d$ are  reduced to $O(N^2(|\mathcal{K}_2|+1)+N|\mathcal{K}_2|)$ and $N(|\mathcal{K}_2|+1)$, respectively. When $N$ is small or moderate, we  prefer to directly use  the  Cholesky factorization to solve the linear system, where the total computational cost is $O(N^2(|\mathcal{K}_2|+1)+N|\mathcal{K}_2|)+O(N^3).$
\section{Numerical experiments}\label{sec:5}
In this section, we conduct several experiments on real market data sets to show the out-of-sample performance of the DR-MLSAD model, we also compare the efficiency of different algorithms in solving the DR-MLSAD model on real data sets and random data sets. All our experiments are executed in MATLAB R2019a  on a Dell desktop  computer with Intel Xeon  Gold 6144 CPU @ 3.50GHz  and 256 GB RAM.

\subsection{Comparison of out-of-sample performance}
This subsection  is devoted to comparing the out-of-sample performance of DR-MLSAD models with Wasserstein radius chosen by different methods.  In Section \ref{5.1.1}, we describe the classical model compared to DR-MLSAD model, the collection of real data sets and the setup of procedures for testing out-of-sample performance.  In Section \ref{5.1.2}, we present the performance  of DR-MLSAD models with radius chosen by  RWPI approach and  cross-validation, respectively, while comparing them in performance with some classical models.
\subsubsection{Methodology}\label{5.1.1}
\begin{itemize}
\item[(1)] {\emph{Comparison with Classical Models.}}\ 
We contrast the DR-MLSAD model with the radius selected by RWPI approach (denoted as RWPI-DRMLSAD) and DR-MLSAD model with the radius selected by the cross-validation (denoted as  CV-DRMLSAD). In addition, we compare the out-of-sample performance of the Wasserstein DR-MLSAD model with the SAA model \eqref{saa} and the Naive $1/N$ strategy \cite{DV.GL.UR.2009}. 
For the Naive $1/N$ strategy, it is a simple portfolio model where each asset has an equal weight, i.e., $x_i=1/m,\ i=1,\dots,m.$
\item[(2)] {\emph{Data Sets.}}\ 
We collect the following  six data sets of monthly stock returns from July 1963 to May 2023 from the Ken French's website\footnote{\url{https://mba.tuck.dartmouth.edu/pages/faculty/ken.french/data_library.html}} to validate the numerical results of the proposed model. The six data sets include:  (1) $100$ portfolios formed on momentum ({\emph{10MOM}}); (2) $25$ portfolios formed on size and operating profitability ({\emph{25MEOP}}); (3) $6$ portfolios formed on size and operating profitability ({\emph{6MEOP}}); (4) $100$  portfolios formed on size and operating profitability  ({\emph{100MEOP}}); (5) $25$  portfolios formed on book-to-market and operating profitability ({\emph{25BEMEOP}}); (6) $100$ portfolios formed on size and investment ({\emph{100MEINV}}). In all cases, we remove those assets that have missing values. The statistics of all test data sets are presented in Table \ref{Tab:1}.
	\begin{table}[htb]
	\centering
	\caption{Summary of tested data sets} \label{Tab:1}
	\setlength{\tabcolsep}{2mm}{
		\begin{tabular}{ccccccc}
			\hline	 \multirow{1}*{No. }  &\multirow{1}*{Dataset}&\multirow{1}*{Stocks}& \multicolumn{1}{c}{Samples} & \multicolumn{1}{c}{Time period } & \multicolumn{1}{c}{Source} & \multicolumn{1}{c}{Frequency } \\
			\midrule 
			1 & {\emph{10MOM}} & 10 &719& 07/1963-05/2023  &   K.French & Monthly \\
			2 & {\emph{25MEOP}} & 25 &719& 07/1963-05/2023 &   K.French & Monthly \\
			3 & {\emph{6MEOP}} & 6 &719& 07/1963-05/2023 &   K.French & Monthly \\
			4 & {\emph{100MEOP}}& 100 &719& 07/1963-05/2023&   K.French & Monthly \\
			5 &{\emph{25BEMEOP}} & 25&719 & 07/1963-05/2023  &  K.French & Monthly\\
			6 & {\emph{100MEINV}} & 99&719 & 07/1963-05/2023  &  K.French & Monthly \\
			\hline
	\end{tabular}}
\end{table}

\item[(3)] {\emph{Rolling Window Analysis.}}\ 
We follow the ``rolling-horizon" procedures in \cite{DV.GL.UR.2009,BJ.DI.DMC.2009,DV.GL.UR.2009b} to achieve out-of-sample performance comparisons of the models. Let $T$ be the length of the data set. We divide the data set into an estimation window of length $\tau$ and a prediction window of length $T-\tau$. We explain the procedure for each time in the ``rolling-horizon" strategy: First, we use the data from the 1st estimate window of length $\tau$ to obtain an optimal portfolio strategy. Next, we apply this strategy to the prediction window of length $P$ to calculate the corresponding portfolio return. Finally, we remove the first $P$ observations of the estimation window,  then we add $P$ observations from $\tau+1$ to $\tau+ P$ to the estimation window, which forms the second estimation window. This process is repeated until the end of the data set is reached. In our experiments, we set $P=1$.
\item[(4)] {\emph{Evaluation Criteria.}}\ 
Based on the above rolling window analysis, we can obtain $T-\tau$ portfolio-weight vectors for each model, i.e., $w_t$ for $t=\tau,\dots,T-1.$ Let $r_{t+1}$ be the return in the $(t+1)$th period.  We can compute out-of-sample returns for $T-\tau-1$ periods. Furthermore,
we evaluate the out-of-sample performance of each model through five performance criteria: (1)  out-of-sample portfolio average return $\widehat{\mu}$; (2)  out-of-sample portfolio variance $\widehat{\sigma}^2$; (3)  out-of-sample portfolio sharp ratio $\widehat{SR}$; (4) portfolio turnover (TURN); (5) conditional value-at-risk (CVaR).

\quad The out-of-sample portfolio average return $\widehat{\mu}$ and variance $\widehat{\sigma}^2$ are  defined by
$$
\widehat{\mu}=\frac{1}{T-\tau}\sum_{t=\tau}^{T-1}w_t^{\top}r_{t+1},\ \  \widehat{\sigma}^2 = \frac{1}{T-\tau-1}\sum_{t=\tau}^{T-1}(w_t^{\top}r_{t+1}-\widehat{\mu})^2. 
$$

Sharpe ratio \cite{SWF.1998} is used to evaluate the expected return per unit of risk, which  is given by 
$$
\widehat{SR} =\frac{ \widehat{\mu}}{\widehat{\sigma}}.
$$
Portfolio turnover represents the frequency with which assets in the portfolio are bought and sold. Let $w_{i,t}$ be the $i$-th component of $w_t$. Following \cite{AT.MN.JG.YK.2013}, the turnover of the portfolio is computed by 
$$
{\rm{TURN}}= \frac{1}{T-\tau-1}\sum_{t = \tau}^{T-1}\sum_{i=1}^{m}|w_{i,{t+1}}-\frac{1+r_{i,{t+1}}}{1+r_{t+1}^{\top}w_t}w_{i,t}|.
$$
CVaR  denotes the $95\%$-conditional value-at-risk \cite{RRT.US.2002} of portfolio losses, which can be computed by
$$
{\rm{CVaR}} = \min_{\eta}\left\lbrace \eta+ \frac{1}{(T-\tau)(1-95\%)}\sum_{t=\tau}^{T-1}\max(-w_t^{\top}r_{t+1}-\eta,0)\right\rbrace .
$$
A better investment strategy usually has higher returns, higher sharpe ratio, lower variance, turnover, and CVaR.

\item[(5)] {\emph{Parameter Setting.}}\
In the Wasserstein DR-MLSAD model, we need to choose some key parameters such as the target return $\bar{\rho}$, $\rho$ and the  radius $\epsilon$.
To ensure that problem \eqref{primalpp} is feasible, we first solve problem \eqref{primalpp} under distribution $\widehat{\mathbb{P}}_N$ without $\mathbb{E}_{\widehat{\mathbb{P}}_N}(\xi^{\top}x)=\bar{\rho}$ to get an optimal solution $x_N$, then we choose $\bar{\rho}=\mathbb{E}_{\widehat{\mathbb{P}}_N}(\xi^{\top}x_N)$. In order to guarantee the feasibility of problem \eqref{DR2}, we set $\rho = \bar{\rho}-\epsilon.$
For the choice of $\epsilon$, we apply Algorithm \ref{algepsi} to estimate a suitable radius. Another way to find $\epsilon$ is to utilize $5$-fold cross validation (CV) method.
Specifically, for CV method, we search for $\epsilon$ in the interval $[0.01,0.15]$ with step size of $0.02$, and $\epsilon$ is selected as the value that leads to the minimum risk of the portfolio. Moreover, the rolling horizon experiments were carried out  with the parameter $\tau = 90$. We repeated the experiment on these $6$ datasets to obtain convincing results.
\end{itemize}

\subsubsection{Numerical results of model performance}\label{5.1.2}
We compare the out-of-sample performance of  DR-MLSAD model with the radius selected by the RWPI approach (RWPI-DRMLSDA), DR-MLSAD model with the radius selected by the cross-validation (CV-DRMLSAD),  $1/N$ strategy and  SAA model. Furthermore, we  plot the cumulative wealth curves of different portfolios on different datasets.
\begin{table}[h]
\caption{Portfolio out-of-sample average return  $(\hat{\mu})$, variance $(\hat{\sigma}^2)$,  sharpe ratio $(\widehat{SR})$, turnover $({\rm{TURN}})$ and conditional value-at-risk $({\rm{CVaR}})$ of RWPI-DRMLSDA, CV-DRMLSAD, $1/N$ and SAA ($\tau=90$) }\label{Tab:2}%
\begin{tabular*}{\textwidth}{@{\extracolsep\fill}ccccccc}
	\hline Dataset &  Model & $\hat{\mu}$&$\hat{\sigma}^2$& $\widehat{SR}$ & ${\rm{TURN}}$& ${\rm{CVaR}} $\\
	\midrule
	10MOM	 & RWPI-DRMLSAD	 & 0.9012 	 & 20.5782 	 & 0.1987 	 & {\bf{0.5388}} 	 & {\bf{9.2394}} \\  
	& CV-DRMLSAD	 & 0.9097 	 & {\bf{20.5076}} 	 & {\bf{0.2009}} 	 & 0.6965 	 & 9.2583 \\  
	& $1/N$	 & {\bf{1.0553}} 	 & 29.7033 	 & 0.1936 	 & 0.9259 	 & 11.3068 \\  
	& SAA	 & 0.9096 	 & 20.5247 	 & 0.2008 	 & 0.7055 	 & 9.2623 \\  
	25MEOP	 & RWPI-DRMLSAD	 & {\bf{1.2015}} 	 & 22.2016 	 & {\bf{0.2550}} 	 & 0.4649 	 & 9.6203 \\  
	& CV-DRMLSAD	 & 1.1844 	 & {\bf{22.0150}} 	 & 0.2524 	 & {\bf{0.4458}} 	 & {\bf{9.5296}} \\  
	& $1/N$	 & 0.9281 	 & 33.4317 	 & 0.1605 	 & 0.9437 	 & 12.4145 \\  
	& SAA	 & 1.1855 	 & 22.0388 	 & 0.2525 	 & 0.4467 	 & 9.5369 \\  
	6MEOP	 & RWPI-DRMLSAD	 & {\bf{0.9459}} 	 & {\bf{22.3054}} 	 & {\bf{0.2003}} 	 & {\bf{0.3746}} 	 & {\bf{9.8659}} \\  
	& CV-DRMLSAD	 & 0.9270 	 & 22.3740 	 & 0.1960 	 & 0.3829 	 & 9.9155 \\  
	& $1/N$	 & 0.8066 	 & 30.6892 	 & 0.1456 	 & 2.8403 	 & 11.9777 \\  
	& SAA	 & 0.9295 	 & 22.3847 	 & 0.1965 	 & 0.3839 	 & 9.9075 \\  
	100MEOP	 & RWPI-DRMLSAD	 & {\bf{1.1878}} 	 & {\bf{21.8240}} 	 & {\bf{0.2543 	}} & {\bf{0.8953}} 	 & 9.7052 \\  
	& CV-DRMLSAD	 & 1.1645 	 & 21.8511 	 & 0.2491 	 & 0.9379 	 & {\bf{9.6608}} \\  
	& $1/N$	 & 0.9196 	 & 33.8907 	 & 0.1580 	 & 1.0531 	 & 12.4937 \\  
	& SAA	 & 1.1595 	 & 21.8864 	 & 0.2478 	 & 0.9381 	 & 9.7169 \\  
	25BEMEOP	 & RWPI-DRMLSAD	 & {\bf{0.8491}} 	 & 24.1743 	 & {\bf{0.1727}} 	 & 1.4880 	 & {\bf{11.8268}} \\  
	& CV-DRMLSAD	 & 0.7914 	 & {\bf{24.1638}} 	 & 0.1610 	 & 1.2041 	 & 12.0147 \\  
	& $1/N$	 & 0.7419 	 & 28.5108 	 & 0.1390 	 & 9.0624 	 & 11.9456 \\  
	& SAA	 & 0.7948 	 & 24.4234 	 & 0.1608 	 & {\bf{1.1529}} 	 & 12.0832 \\  
	100MEINV	 & RWPI-DRMLSAD	 & {\bf{0.9769}} 	 & 20.8309 	 & {\bf{0.2140}} 	 & {\bf{0.9593}} 	 & {\bf{9.9284}} \\  
	& CV-DRMLSAD	 & 0.9743 	 & 20.8211 	 & 0.2135 	 & 1.3241 	 & 10.0029 \\  
	& $1/N$	 & 0.9291 	 & 32.7525 	 & 0.1624 	 & 1.2149 	 & 12.3543 \\  
	& SAA	 & 0.9681 	 & {\bf{20.6880}}	 & 0.2128 	 & 1.0929 	 & 9.9827 \\
	\hline\\
\end{tabular*}
{\emph{Note.}}\ The best result in each experiment is highlighted in bold.
\end{table}

\begin{figure}[h]
	\centering
	\includegraphics[width=0.9\textwidth]{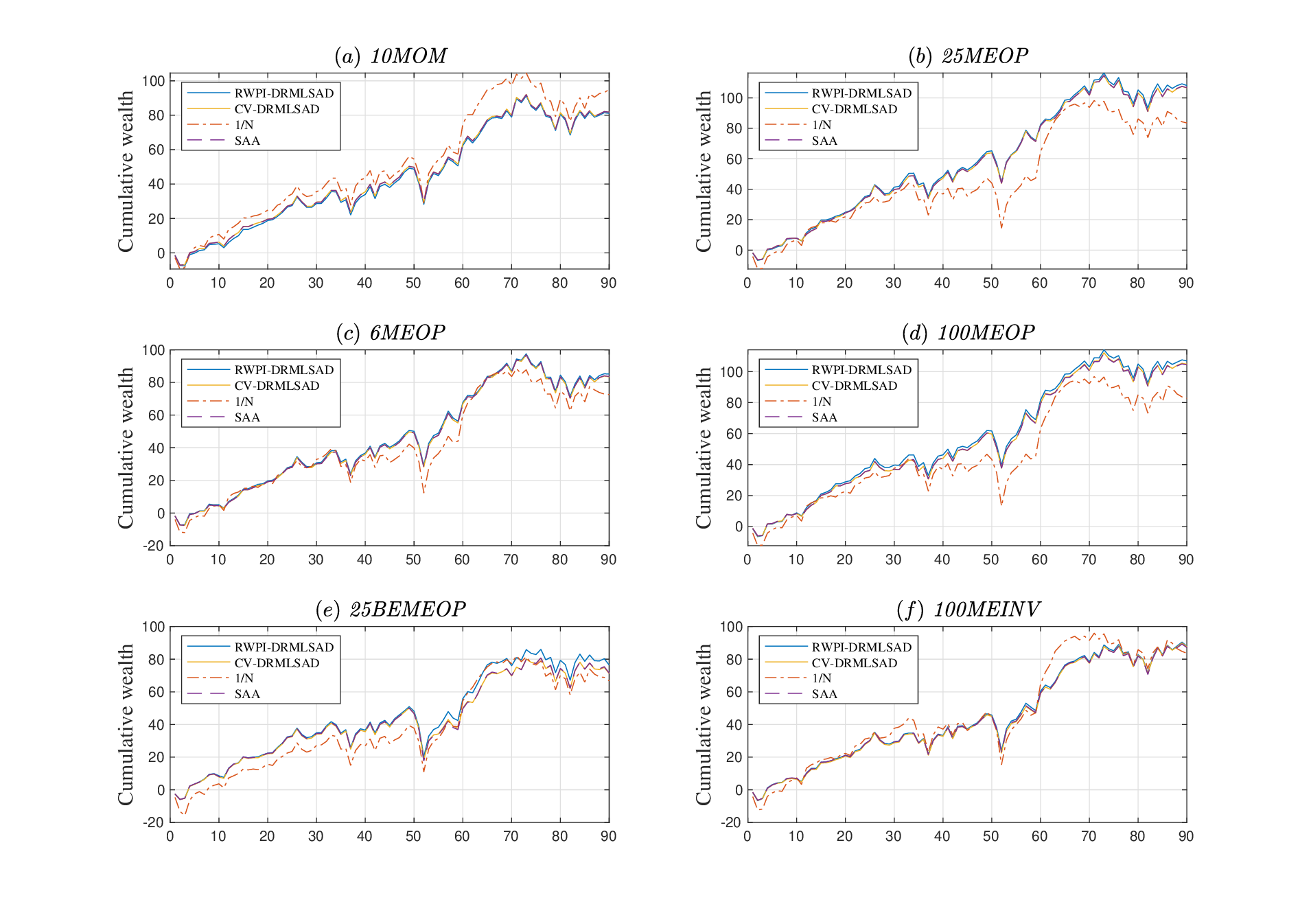}
	\caption{Portfolio out-of-sample  cumulative wealth of different portfolio selection models for six datasets}\label{Fig:1}
\end{figure}

The performance of the four strategies for Dataset 1-6 under the five criteria $\hat{\mu}$, $\hat{\sigma}^2$, $\widehat{SR}$, ${\rm{TURN}}$ and ${\rm{CVaR}}$  is shown in Table \ref{Tab:2}. The corresponding evaluations of the accumulated wealth corresponding to the four strategies  for Dateset 1-6 are displayed in Fig. \ref{Fig:1}. As shown in Table \ref{Tab:2} and Fig. \ref{Fig:1}, we make some analysis for each dataset below:
\begin{itemize}
\item For Dataset {\emph{100MOM}},  ``$\hat{\sigma}^2$" and ``$\widehat{SR}$" of CV-DRMLSAD outperform those of RWPI-DRMLSAD, $1/N$ and SAA,  ``${\rm{TURN}}$" and ``${\rm{CVaR}}$" of  RWPI-DRMLSAD  outperform those of  CV-DRMLSAD , $1/N$ and  SAA, while ``$\hat{\mu}$" of $1/N$ is better than those of other models.  Moreover, we find that all evaluation criteria of CV-DRMLSAD are better than those of SAA.
The  cumulative wealth curve of $1/N$ dominates those of other models. 
\item For Dataset {\emph{25MEOP}}, we observe that ``$\hat{\mu}$" and ``$\widehat{SR}$" of RWPI-DRMLSAD are better than of  CV-DRMLSAD, they are opposite for ``$\hat{\sigma}^2$", ``${\rm{TURN}}$" and ``${\rm{CVaR}}$". The cumulative wealth curve of RWPI-DRMLSAD dominates those of other models most of the time.
\item For Dataset {\emph{6MEOP}}, RWPI-DRMLSAD performs the best among all the evaluation criteria. Furthermore, by comparing SAA and CV-DRMLSAD, it is not difficult to find that ``$\hat{\sigma}^2$" and ``${\rm{TURN}}$" of CV-DRMLSAD are better than those of SAA, and the situation is opposite for other evaluation criteria. The cumulative wealth curve of RWPI-DRMLSAD dominates those of other models most of the time.
\item For Dataset {\emph{100MEOP}}, every evaluation criteria of the DRMLSAD model are better than that of $1/N$ and SAA. Further comparing the RWPI-DRMLSAD and CV-DRMLSAD, we can see that RWPI-DRMLSAD performs better than CV-DRMLSAD on all evaluation criteria except ``${\rm{CVaR}}".$ The cumulative wealth curve of RWPI-DRMLSAD dominates those of other models most of the time.
\item For Dataset {\emph{25BEMEOP}} and {\emph{100MEINV}},  the out-of sample performance of RWPI-DRMLSAD is more prominent than of CV-DRMLSAD  on most evaluation criteria. For  Dataset {\emph{25BEMEOP}},  the cumulative wealth curve of RWPI-DRMLSAD dominates those of other models most of the time.  For Dataset {\emph{100MEINV}}, the cumulative wealth curve of $1/N$ dominates those of other models most of the time.
\end{itemize} 

In conclusion, through the observation value of each evaluation criteria we find the out-of-sample performance of RWPI-DRMLSAD is better than other models in most cases.
The average return of RWPI-DRMLSAD is desirable in most cases, while the variance is relatively large, which can be mainly explained by high return accompanied by high risk. On the other hand, we see that sometimes the out-of-sample performance of CV-DRMLSAD may not be as good as that of SAA, which may be due to the radius found by cross-validation is not very appropriate. Based on the above analysis, we conservatively believe in the effectiveness of the RWPI approach for selecting the radius.
\subsection{Comparison of algorithms}
In this section, we perform numerical experiments to verify the efficiency of {\sc PpdSsn}  algorithm  on real data sets and random data sets.  In Section \ref{5.2.1}, we introduce a series of contrasting first-order algorithms; In Section \ref{5.2.2}, we give the parameter settings, which are crucial for the efficiency of the algorithms; In Section \ref{5.2.3}, we show the performance comparison of all algorithms for the DR-MLSAD model on random  data sets.
\subsubsection{ First-order methods}\label{5.2.1}
Inspired by \cite{HTMChu.KCTho.YJZhang.2022}, we develop a first order algorithm based on alternating direction method of multipliers (ADMM)\cite{DG.BM.1976} to solve problem \eqref{DR3} and its dual problem, respectively. Introducing the variables $\alpha\in\mathbb{R}^m,z\in\mathbb{R}^N$ and $\xi\in\mathbb{R}^m,$ we can obtain the equivalent form of problem \eqref{3.8} and its dual in minimization form as follows:
\begin{align}
&\min_{x,\alpha\in\mathbb{R}^m,y\in\mathbb{R}^N}\left\{
\frac{1}{2N}e^{\top}y+ \frac{1}{2N}\|y\|_1+\chi_{\mathcal{C}}(\alpha)
\ | \ y = Ax-\epsilon e, x-\alpha=0
\right\},\label{5.1}\\
&\min_{u,z\in\mathbb{R}^N,\xi\in\mathbb{R}^m}\left\{
\left\langle u,\epsilon e\right\rangle +\chi_{\frac{1}{2N}\mathcal{B}_{\infty}}(z)+\chi_{\mathcal{C}}^*(\xi)
\ | \ -A^{\top}u-\xi=0,u-\frac{1}{2N}e = z
\right\},\label{5.2}
\end{align}
where $\chi_{\frac{1}{2N}\mathcal{B}_{\infty}}$ and $\chi_{\mathcal{C}}^*(\cdot)$ denote the conjugate of the functions $\frac{1}{2N}\|\cdot\|_1$ and $\chi_{\mathcal{C}}(\cdot)$ , respectively.

Given a positive constant $\tilde{\rho}$, we consider the augmented Lagrangian functions for problems \eqref{5.1} and \eqref{5.2}:
$$
\begin{aligned}
&\begin{aligned}
	&\mathcal{L}_{\tilde{\rho}}^{(P)}(x,\alpha,y;u,\xi)  =  \frac{1}{2N}e^{\top}y+ \frac{1}{2N}\|y\|_1+\chi_{\mathcal{C}}(\alpha)+\frac{\tilde{\rho}}{2}\|Ax-\epsilon e-y+\frac{1}{\tilde{\rho}}\lambda\|^2-\frac{1}{2\tilde{\rho}}\|u\|^2 \\
	&+\frac{\tilde{\rho}}{2}\|x-\alpha+\frac{1}{\tilde{\rho}}\xi\|^2-\frac{1}{2\tilde{\rho}}\|\xi\|^2,\ \forall (x,\alpha,y;u,\xi)\in\mathbb{R}^m\times\mathbb{R}^m\times\mathbb{R}^N\times\mathbb{R}^N\times\mathbb{R}^m,
\end{aligned}\\
&\begin{aligned}
	&\mathcal{L}_{\tilde{\rho}}^{(D)}(u,\xi,z;x,y)  =  \chi_{\frac{1}{2N}\mathcal{B}_{\infty}}(z)
	+\frac{\tilde{\rho}}{2}\|-A^{\top}u-\xi+\frac{1}{\rho}x\|^2+\frac{\tilde{\rho}}{2}\|u-z-\frac{1}{2N}e+\frac{1}{\tilde{\rho}}y\|^2 \\
	&+ \langle u,\epsilon e\rangle+\chi_{\mathcal{C}}^*(\xi)-\frac{1}{2\tilde{\rho}}\|x\|^2-\frac{1}{2\tilde{\rho}}\|y\|^2,\ \forall (u,\xi,z;x,y) \in\mathbb{R}^N\times\mathbb{R}^m\times\mathbb{R}^N\times\mathbb{R}^m\times\mathbb{R}^N.
\end{aligned}
\end{aligned}
$$

{\emph{Alternating direction method of multipliers for primal problem \eqref{5.1}}.} ADMM for solving the primal  problem \eqref{5.1} is outlined in Algorithm \ref{Alg:3}. 
\begin{algorithm}[!h]
\caption{  (pADMM) Alternating direction method of multipliers for \eqref{5.1}} \label{Alg:3}
\begin{algorithmic}
	\State {\bf{Initialize:}}   $\tilde{\rho}>0$, $\tau=1.618$, $y^0 = {\bf{0}}\in\mathbb{R}^N,\alpha^0={\bf{0}}\in\mathbb{R}^m,u^0 = {\bf{0}}\in\mathbb{R}^N,\xi^0={\bf{0}}\in\mathbb{R}^m$. 
	\For{$k=1,2,\dots,$}
	\State {\bf{Step 1: }} Compute $x^{k+1}$ by
	\begin{equation}\label{linear2}
		\begin{aligned}
			x^{k+1}&=\mathop{\arg\min}\limits_{x\in\mathbb{R}^m}\left\{ \frac{\tilde{\rho}}{2}\|Ax-\epsilon e-y^k+\frac{1}{\tilde{\rho}}u^k\|^2+\frac{\tilde{\rho}}{2}\|x-\alpha^k+\frac{1}{\tilde{\rho}}\xi^k\|^2\right\}\\
			&=(I_m+A^{\top}A)^{-1}\left[ A^{\top}(y^k+\epsilon e-\frac{1}{\tilde{\rho}}u^k)+(\alpha^k-\frac{1}{\rho}\xi^k)\right].
		\end{aligned}
	\end{equation}
	\State {\bf{Step 2: }} Compute $y^{k+1},\alpha^{k+1}$ by
	$$
	y^{k+1}={\rm{Prox}}_{\frac{1}{2N\tilde{\rho}}\|\cdot\|_1}\left(Ax^{k+1}-\epsilon e-\frac{1}{\tilde{\rho}}(\frac{1}{2N}e-u^k)\right),\
	\alpha^{k+1} = \Pi_{\mathcal{C}}(x^{k+1}+\frac{1}{\tilde{\rho}}\xi^k).
	$$
	\State {\bf{Step 3: }} Update $u^{k+1},\xi^{k+1}$ by
	$$
	u^{k+1} = u^k+\tau\tilde{\rho}(Ax^{k+1}-\epsilon e-y^{k+1}),\ 	\xi^{k+1} = \xi^k+\tau\tilde{\rho}(x^{k+1}-\alpha^{k+1}).
	$$
	\EndFor
	\State {\bf{Output}} $x^{k+1}$ and $y^{k+1}$.
\end{algorithmic}
\end{algorithm} 
Since the computation of linear system \eqref{linear2} is the most expensive part of the Algorithm \ref{Alg:3}, we  use some effective techniques for  solving linear system to reduce the computational cost. When $m\leq N$, we use the Cholesky factorization or the conjugate gradient method to solve linear system \eqref{linear2}. Otherwise, we apply the Sherman-Morrison-Woodbury formula 
$$
(I_m+A^{\top}A)^{-1}=I_{m}-A^{\top}(I_N+AA^{\top})^{-1}A,
$$
and we calculate the inverse of the matrix $(I_N+AA^{\top})$ by Cholesky factorization.

{\emph{Alternating direction method of multipliers for dual problem \eqref{5.2}}.} The ADMM for solving the dual problem \eqref{5.2} is outlined in Algorithm \ref{Alg:4}. Similarly, we consider reducing the computational cost of solving the linear system \eqref{linear3} in Algorithm \ref{Alg:4}. When $N<m$, we utilize either the Cholesky factorization or the conjugate gradient method for solving linear system \eqref{linear3}. Otherwise, we apply the Sherman-Morrison-Woodbury formula 
$$
(I_N+AA^{\top})^{-1}=I_{N}-A(I_m+A^{\top}A)^{-1}A^{\top},
$$
then we compute the Cholesky factorization of  a smaller $m\times m$ matrix $I_m+A^{\top}A$.
\begin{algorithm}[!h]
\caption{  (dADMM) Alternating direction method of multipliers for \eqref{5.2}} \label{Alg:4}
\begin{algorithmic}
	\State {\bf{Initialize:}}  $\tilde{\rho}>0$, $\tau=1.618$, $x^0 = {\bf{0}}\in\mathbb{R}^m,y^0={\bf{0}}\in\mathbb{R}^N,z^0 = {\bf{0}}\in\mathbb{R}^N,\xi^0={\bf{0}}\in\mathbb{R}^m$. 
	\For{$k=1,2,\dots,$}
	\State {\bf{Step 1: }} Compute $u^{k+1}$ by
	\begin{equation}\label{linear3}
		\begin{aligned}
			u^{k+1}&=\mathop{\arg\min}\limits_{u\in\mathbb{R}^N}\left\{  \left\langle u,\epsilon e\right\rangle 
			+\frac{\tilde{\rho}}{2}\|-A^{\top}u-\xi^k+\frac{1}{\tilde{\rho}}x^k\|^2+\frac{\tilde{\rho}}{2}\|u-z^k-\frac{1}{2N}e+\frac{1}{\tilde{\rho}}y^k\|^2 \right\}\\
			&=(I_N+AA^{\top})^{-1}\left[ A(\frac{x^k}{\tilde{\rho}}-\xi^k)+(z^k+\frac{e}{2N}-\frac{y^k}{\tilde{\rho}}-\frac{\epsilon e}{\tilde{\rho}})\right].
		\end{aligned}
	\end{equation}
	\State {\bf{Step 2: }} Compute $\xi^{k+1},z^{k+1}$ by
	$$
	\begin{aligned}
		&\xi^{k+1}=\left({\tilde{\rho}}^{-1}x^k-A^{\top}u^{k+1}\right)-\tilde{\rho}^{-1}\Pi_{\mathcal{C}}(x^k-\rho A^{\top}u^{k+1}),\\
		&
		z^{k+1} = \Pi_{\frac{1}{2N}\mathcal{B}_{\infty}}\left(u^{k+1}-\frac{1}{2N}e+\frac{1}{\tilde{\rho}}y^k\right).
	\end{aligned}
	$$
	\State {\bf{Step 3: }} Update $x^{k+1},y^{k+1}$ by
	$$
	x^{k+1} = x^k+\tau\tilde{\rho}(-A^{\top}u^{k+1}-\xi^{k+1}),\ 	y^{k+1} = y^k+\tau\tilde{\rho}(u^{k+1}-\frac{e}{2N}-z^{k+1}).
	$$
	\EndFor
	\State {\bf{Output}} $u^{k+1}$.
\end{algorithmic}
\end{algorithm} 

\subsubsection{ Stopping criteria and parameter settings}\label{5.2.2}
We clarify the stopping criteria and set the relevant parameters in all the algorithms.

{\bf{Stopping criteria.}}
In our numerical  experiments, we use the relative KKT residual
\begin{equation*}
\begin{split}
	&Res_{1}:=\max\left(\frac{\|x-\Pi_{\mathcal{C}}(x+A^{\top}u)\|}{1+\|x\|+\|u\|},\frac{\|y-{\rm{Prox}}_{\frac{1}{2N}\|\cdot\|_1}(y-\frac{1}{2N}e-u)\|}{1+\|y\|+\|u\|}\right),\\ 
	&{Res_{2}}:=\frac{\|y-Ax+\epsilon e\|}{1+\|y\|+\|Ax\|},\ R_{kkt}:=\max\left\{Res_{1},Res_{2}\right\}
\end{split}	
\end{equation*}
to measure the accuracy of the approximate solution $(x,y,u)$ to the  system \eqref{kkt}. Let $tol$ be the tolerance. We terminate the {\sc PpdSsn} algorithm when $R_{kkt}\leq tol$ or the number of iterations exceeds $500$. We start the {\sc PpdSsn} algorithm with $(x^0,u^0)=({\bf{0}},{\bf{0}}).$

For the  algorithms pADMM and dADMM, we stop the algorithms when 
\begin{equation*}
	\begin{aligned}
R_{kkt}&=\max\left(\frac{\|x-\Pi_{\mathcal{C}}(x-A^{\top}u)\|}{1+\|x\|+\|u\|},\frac{\|y-{\rm{Prox}}_{\frac{1}{2N}\|\cdot\|_1}(y-\frac{1}{2N}e+u)\|}{1+\|y\|+\|u\|},\frac{\|Ax-\epsilon e-y\|}{1+\|y\|+\|Ax\|}\right)\\
&\leq tol,
\end{aligned}
\end{equation*}
or the number of iterations exceeds $50000$.

{\bf{Parameter settings.}} 
For the {\sc PpdSsn} in Algorithm \ref{Alg:1}, we initialize the parameter $\sigma_0=7.6,\gamma_0 = 9400.$ Then we adjust $\sigma_k$ and$\gamma_k$ adaptively by the following rule. If $Res_{1}^k\leq Res_{2}^k,$ we set $\sigma_k=\max(10^{-3},\rho_1\sigma_{k-1})$ and $\gamma_k=\min(10^6,\rho_2\gamma_{k-1})$ with 
\begin{equation*}
\rho_1=\left\{
\begin{aligned}
	&0.9, &s_2^k>s_1^k,\\
	&0.95,	&s_2^k=s_1^k,\\
	&0.98,& s_2^k<s_1^k,
\end{aligned}\right.\quad 
\rho_2=\left\{
\begin{aligned}
	&1.2, &s_2^k>s_1^k,\\
	&1.3 & s_2^k=s_1^k,\\
	&1.4,&s_2^k<s_1^k,
\end{aligned}\right.
\end{equation*}
where $s_1^k:=\frac{\|x^{k+1}-\Pi_{\mathcal{C}}(x^{k+1}+A^{\top}u^{k+1})\|}{1+\|x^{k+1}\|+\|u^{k+1}\|}$ and $s_2^k:=\frac{\|y^{k+1}-{\rm{Prox}}_{\frac{1}{2N}\|\cdot\|_1}(y^{k+1}-\frac{1}{2N}e-u^{k+1})\|}{1+\|y^{k+1}\|+\|u^{k+1}\|}$. Otherwise, we set $\sigma_k=\max(10^{-2},\rho_1\sigma_{k-1})$ and $\gamma_k=\max(2\times10^6,\rho_2\gamma_{k-1})$ with 
\begin{equation*}
\rho_1=\left\{
\begin{aligned}
	&1.02, &s_2^k<s_1^k,s_1^k>1.5Res_{2}^k,\\
	&1.05 & 1.5Res_{2}^k>s_1^k,s_1^k>s_2^k,\\
	&1.03,&s_2^k>s_1^k,s_1^k>1.1Res_{2}^k,\\
	&1.01,& {\rm{otherwise}},
\end{aligned}\right.\quad
\rho_2=\left\{
\begin{aligned}
	&1.2, &s_2^k>s_1^k, s_2^k>1.5Res_{2}^k,\\
	&1.15,	&1.5Res_{2}^k\geq s_2^k, s_2^k=s_1^k,\\
	&1.1,& s_2^k<s_1^k,s_2^k>1.1Res_{2}^k,\\
	&1.0& {\rm{otherwise}},
\end{aligned}\right.
\end{equation*}
where $Res_{1}^k$ and $Res_{2}^k$ denote the value of $Res_{1}$ and $Res_{2}$ respectively at $k$-th outer iteration.

In Algorithm \ref{Alg:2}, we set $\epsilon_j=\max\left(10^{-6},\max(10^{-8},100\|\nabla\psi(u^j)\|)\right), \bar \varrho=0.5,\vartheta=10^{-4}.$ We terminate CG algorithm at the $j$-th {\sc Ssn} when $$
\|(\mathcal{V}_j+\epsilon_j I_N)d^j+\nabla\psi(u^{j})\|\leq \min(10^{-6},\|\psi(u^{j})\|^{1.1}).
$$
In addition, we set  $\tilde{\rho}=0.01$  in Algorithm \ref{Alg:3}. We set $\tilde{\rho}=1$ in Algorithm \ref{Alg:4}.
\subsubsection{Numerical results for  data sets}\label{5.2.3}
In this section, we conduct experiments on real data sets and large-scale random data sets to demonstrate the good performance of {\sc PpdSsn} algorithm. 
\begin{itemize}
\item [(1)] {\emph{Real Data Sets.}} 	We compare the {\sc PpdSsn} algorithm, pADMM, dADMM and  Gurobi solver on the real data sets. The specific information of the real data is shown in Table \ref{Tab:1}. To solve problem \eqref{EDRMLSAD} directly, we call the Gurobi \cite{GUROBI} solver with the help of Yalmip solver. For the sake of convenience, we set the radius $\epsilon=0.15$ and the expected return $\rho = \frac{0.2}{N}\hat{\mu}^{\top}e$, where $\hat{\mu}=\frac{1}{N}\sum_{i=1}^{N}\hat{\xi}_i$. 
\begin{center}
	\setlength{\tabcolsep}{5mm}{
		\begin{longtable}{|c|c|c|c|c|}
			\captionsetup{width=0.8\textwidth}
			\caption{Numerical results of the {\sc PpdSsn}, pADMM, dADMM and Gurobi on real data set when $R_{kkt}\leq 10^{-5}$. ``32(90)" means 32 outer iterations (the total number of inner iterations is 90) and times are shown in seconds } \label{Tab:6}\\
			\hline	 \multirow{1}*{Dataset}  &\multirow{1}*{ \textbf{Algorithm}}&\multirow{1}*{\textbf{iter}}& \multicolumn{1}{c|}{\textbf{time}} & \multicolumn{1}{c|}{$R_{kkt}$} \\ \hline
			\endfirsthead	
			\hline 
			\endfoot
			\hline	 \multirow{1}*{\textbf{$(N,m)$} }  &\multirow{1}*{ \textbf{Algorithm}}&\multirow{1}*{\textbf{iter}}& \multicolumn{1}{c|}{\textbf{time}} & \multicolumn{1}{c|}{$R_{kkt}$} \\ \hline
			\endhead
			${\rm{10MOM}}$	 & PPDNA	 &  32(90) 	 & 0.64 	 & 5.7e-08\\  
			& pADMM	 &  50000 	 & 7.45 	 & 6.7e-04\\  
			& dADMM	 &  50000 	 & 11.56 	 & 1.0e-02\\  
			& Gurobi &  -- 	 & 0.30 	 & --\\  
			\hline ${\rm{25MEOP}}$	 & PPDNA	 &  95(227) 	 & 1.61 	 & 1.0e-06\\  
			& pADMM	 &  50000 	 & 9.00 	 & 2.2e-04\\  
			& dADMM	 &  50000 	 & 25.68 	 & 4.2e-03\\  
			& Gurobi	 &  -- 	 & 0.30 	 & --\\  
			\hline ${\rm{6MEOP}}$	 & PPDNA	 &  21(66) 	 & 0.52 	 & 8.0e-06\\  
			& pADMM	 &  50000 	 & 7.75 	 & 1.8e-04\\  
			& dADMM	 &  50000 	 & 10.64 	 & 3.5e-03\\  
			& Gurobi &  -- 	 & 0.30 	 & --\\  
			\hline ${\rm{100MEOP}}$	 & PPDNA	 &  16(71) 	 & 0.70 	 & 2.6e-07\\  
			& pADMM	 &  50000 	 & 32.25 	 & 6.4e-04\\  
			& dADMM	 &  50000 	 & 65.84 	 & 5.0e-02\\  
			& Gurobi	 &  --	 & 0.42 	 & --\\  
			\hline ${\rm{25BEMEOP}}$	 & PPDNA	 &  22(80) 	 & 0.69 	 & 6.0e-06\\  
			& pADMM	 &  50000 	 & 9.90 	 & 4.7e-04\\  
			& dADMM	 &  50000 	 & 26.09 	 & 1.2e-02\\  
			& Gurobi&  -- 	 & 0.34 	 & --\\  
			\hline ${\rm{100MEINV}}$	 & PPDNA	 &  14(67) 	 & 0.67 	 & 4.2e-08\\  
			& pADMM	 &  50000 	 & 30.56 	 & 3.7e-04\\  
			& dADMM	 &  50000 	 & 65.44 	 & 1.0e-02\\  
			& Gurobi &  -- 	 & 0.42 	 & --\\  
			\hline
	\end{longtable}}
\end{center}
\quad  Table \ref{Tab:6} shows the numerical results of the {\sc PpdSsn} algorithm, pADMM, dADMM and Gurobi solver under $tol=10^{-5}$.  This table contains the number of iterations of the algorithms, CPU time and $R_{kkt}$. We can see that the time of the Gurobi solver is slightly faster than that of the {\sc PpdSsn} algorithm. Moreover, it is easy to observe that the ADMM cannot successfully solve the real data set under $tol=10^{-5}$.

\item [(2)] {\emph{Random Data Sets.}} 
Following the approach in \cite{EPM.KD.2018}, we assume that return $\hat{\xi}_i$ can be decomposed into systemic risk factor $\varphi\sim N(0,2\%)$ common to all assets and an unsystematic or idiosyncratic risk factor $\zeta_i\sim N(i\times 3\%,i\times 2.5\%)$ specific to asset $i$, i.e., $\hat{\xi}_i=\varphi+\zeta_i,$ where $\varphi$ and the idiosyncratic risk factors $\zeta_i,\ i=1,\dots,m$ constitute independent normal random variables.  We set  $(N,m)=(200i,4000i),i=1,...,9$, where $N$ and $m$ denote the number of samples and assets, respectively. The choice of $\hat{\mu}$, $\epsilon$ and $\rho$ on the random data set is the same as on the real data set.


\quad The numerical results of  {\sc PpdSsn} algorithm, pADMM, dADMM and Gurobi under ${\rm{tol}}=10^{-5}$ are listed in Table \ref{Tab:4}, which includes the number of iterations, CPU time and $R_{kkt}$. In particular, the number of iterations of the {\sc PpdSsn} algorithm in Table \ref{Tab:4} contains the number of outer and inner iterations. It is clear to observe in Table \ref{Tab:4} that all the algorithms successfully solve all the  cases. Specifically, the time of  the {\sc PpdSsn} algorithm is faster than that of both pADMM and dADMM. For large-scale data sets, the time of the {\sc PpdSsn} algorithm is about $2$ times faster than that of pADMM and  about $13$ times faster than of dADMM. Furthermore, the time of the {\sc PpdSsn} algorithm is more than  $100$ times faster than that of Gurobi.

\begin{center}
	\setlength{\tabcolsep}{6mm}{
		\begin{longtable}{|c|c|c|c|c|}
			\captionsetup{width=0.8\textwidth}
			\caption{Numerical results of the {\sc PpdSsn}, pADMM, dADMM and Gurobi on random data set when $R_{kkt}\leq 10^{-5}$. ``2(5)" means 2 outer iterations (the total number of inner iterations is 5) and times are shown in seconds } \label{Tab:4}\\
			\hline	 \multirow{1}*{\textbf{$(N,m)$} }  &\multirow{1}*{ \textbf{Algorithm}}&\multirow{1}*{\textbf{iter}}& \multicolumn{1}{c|}{\textbf{time}} & \multicolumn{1}{c|}{$R_{kkt}$} \\ \hline
			\endfirsthead	
			\hline 
			\endfoot
			\hline	 \multirow{1}*{\textbf{$(N,m)$} }  &\multirow{1}*{ \textbf{Algorithm}}&\multirow{1}*{\textbf{iter}}& \multicolumn{1}{c|}{\textbf{time}} & \multicolumn{1}{c|}{$R_{kkt}$} \\ \hline
			\endhead
			$(200,4000)$	 & PPDNA	 &  2(5) 	 & 0.06 	 & 4.2e-06\\  
			& pADMM	 &  18 	 & 0.03 	 & 6.2e-06\\  
			& dADMM	 &  34 	 & 0.66 	 & 6.2e-06\\  
			& Gurobi &  -- 	 & 2.14 	 & --\\  
			\hline $(400,8000)$	 & PPDNA	 &  2(5) 	 & 0.22 	 & 1.2e-06\\  
			& pADMM	 &  16 	 & 0.17 	 & 9.3e-06\\  
			& dADMM	 &  35 	 & 2.59 	 & 6.3e-06\\  
			& Gurobi &  -- 	 & 9.25 	 & --\\  
			\hline $(600,12000)$	 & PPDNA	 &  2(5) 	 & 0.47 	 & 6.4e-07\\  
			& pADMM	 &  16 	 & 0.61 	 & 8.1e-06\\  
			& dADMM	 &  35 	 & 6.50 	 & 8.1e-06\\  
			& Gurobi	 &  -- 	 & 21.61 	 & --\\  
			\hline $(800,16000)$	 & PPDNA	 &  2(5) 	 & 0.84 	 & 9.1e-07\\  
			& pADMM	 &  16 	 & 1.19 	 & 7.4e-06\\  
			& dADMM	 &  35 	 & 11.26 	 & 9.3e-06\\  
			& Gurobi&  -- 	 & 49.65 	 & --\\  
			\hline $(1000,20000)$	 & PPDNA	 &  2(5) 	 & 1.28 	 & 1.1e-06\\  
			& pADMM	 &  15 	 & 1.69 	 & 9.1e-06\\  
			& dADMM	 &  36 	 & 16.77 	 & 6.9e-06\\  
			& Gurobi	 &  -- 	 & 88.66 	 & --\\  
			\hline $(1200,24000)$	 & PPDNA	 &  2(5) 	 & 1.81 	 & 1.2e-06\\  
			& pADMM	 &  15 	 & 2.88 	 & 9.5e-06\\  
			& dADMM	 &  36 	 & 23.89 	 & 7.5e-06\\  
			& Gurobi &  --	 & 108.97 	 & --\\  
			\hline $(1400,28000)$	 & PPDNA	 &  2(5) 	 & 2.44 	 & 1.2e-06\\  
			& pADMM	 &  15 	 & 4.16 	 & 9.2e-06\\  
			& dADMM	 &  36 	 & 31.82 	 & 7.9e-06\\  
			& Gurobi	 &  -- 	 & 207.38 	 & --\\  
			\hline $(1600,32000)$	 & PPDNA	 &  2(5) 	 & 3.16 	 & 1.4e-06\\  
			& pADMM	 &  15 	 & 5.86 	 & 8.1e-06\\  
			& dADMM	 &  36 	 & 40.99 	 & 8.7e-06\\  
			& Gurobi	 &  -- 	 & 321.94 	 & --\\  
			\hline $(1800,36000)$	 & PPDNA	 &  2(5) 	 & 3.94 	 & 1.2e-06\\  
			& pADMM	 &  15 	 & 7.55 	 & 7.7e-06\\  
			& dADMM	 &  36 	 & 52.24 	 & 9.2e-06\\  
			& Gurobi	 & -- 	 & 524.73 	 & --\\  
			\hline
	\end{longtable}}
\end{center}
\end{itemize}

Based on the numerical results, we find that the time of  Gurobi solver is a little faster than that of the {\sc PpdSsn} algorithm when solving  real data sets with a small scale, but once the scale of the problem is larger, the efficiency of the {\sc PpdSsn} algorithm is significantly higher than that of the Gurobi solver. At the same time, although the first-order algorithm can successfully solve large-scale random data sets, it can not solve small-scale real data under $tol = 10^{-5}$, which indicate that ADMM is not very robust in solving such problems.
Therefore, we can safely conclude that {\sc PpdSsn} algorithm is more efficient than pADMM, dADMM and Gurobi solver for solving large-scale distributionally robust mean-lower semi-absolute deviation model.

\section{Conclusion}\label{sec:6}
In this work, we considered a distributionally robust mean-lower semi-absolute deviation portfolio model based on the Wasserstein metric. We have transformed it into an unconstrained convex optimization problem. Since the size of the ambiguity set is a crucial parameter in the model, we have developed a RWPI approach to find the appropriate radius. Numerical results have shown that the DR-MLSAD model with the radius selected by the RWPI approach has good out-of-sample performance. For the transformed convex problem, we have designed an efficient {\sc PpdSsn} algorithm for solving it and reduced the computational cost by exploring some of its special structures. Experimental results have verified that {\sc PpdSsn} algorithm is more efficient than pADMM, dADMM and Gurobi solver  in solving large-scale DR-MLSAD models.

\bmhead{Acknowledgements}

The work of Yong-Jin Liu was in part supported by the National Natural Science Foundation of China (Grant No. 12271097), the Key Program of National Science Foundation of Fujian Province of China (Grant No. 2023J02007), and the Fujian Alliance of Mathematics (Grant No. 2023SXLMMS01).

\section*{Declarations}
\begin{itemize}
\item {\small{\noindent \textbf{Conflict of interest} The authors declare that they have no conflict of interest.}}
\item  {\small{\noindent \textbf{Data Availability} All data generated or analyzed during this study are included in this  article. The data that support the findings of this study are openly available in Ken French' s website \url{https://mba.tuck.dartmouth.edu/pages/faculty/ken.french/data_library.html}}}
\end{itemize}

\begin{appendices}
	
	\section{Proof of Theorem \ref{thm2}}\label{App1}
	
	\begin{proposition}\label{fesi}
		Assume that $q$ satisfies $1/p+1/q=1$ and $q\geq 1$. Then it holds that
		$$
		\min_{\mathbb{P}\in\mathcal{U}_{\epsilon}(\widehat{\mathbb{P}}_N)} \mathbb{E}_{\mathbb{P}}(\xi^{\top}x) =\mathbb{E}_{\widehat{\mathbb{P}}_N}(\xi^{\top}x)-\epsilon \|x\|_q.
		$$
	\end{proposition}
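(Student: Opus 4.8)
The plan is to establish the equality by proving two matching inequalities: a lower bound valid for every member of the ambiguity set, and an upper bound obtained by exhibiting an explicit minimizer that attains it. This avoids invoking the full strong-duality machinery and makes the role of the conjugate exponent $q$ transparent. Throughout I will exploit that $\widehat{\mathbb{P}}_N$ is discrete, so any competitor $\mathbb{P}$ may be coupled to it through a transport plan $\mathbb{Q}$ whose first and second marginals are $\mathbb{P}$ and $\widehat{\mathbb{P}}_N$.

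For the lower bound, fix an arbitrary $\mathbb{P}\in\mathcal{U}_{\epsilon}(\widehat{\mathbb{P}}_N)$. By definition $d_W(\mathbb{P},\widehat{\mathbb{P}}_N)\le\epsilon$, so for every $\eta>0$ there is a coupling $\mathbb{Q}$ with the required marginals and $\int_{\Xi\times\Xi}\|\xi_1-\xi_2\|_p\,\mathbb{Q}(d\xi_1,d\xi_2)<\epsilon+\eta$. Writing the difference of expectations against this coupling and applying H\"older's inequality with conjugate exponents $p,q$ gives
$$
\mathbb{E}_{\mathbb{P}}(\xi^{\top}x)-\mathbb{E}_{\widehat{\mathbb{P}}_N}(\xi^{\top}x)=\int_{\Xi\times\Xi}(\xi_1-\xi_2)^{\top}x\,\mathbb{Q}(d\xi_1,d\xi_2)\ge-\|x\|_q\int_{\Xi\times\Xi}\|\xi_1-\xi_2\|_p\,\mathbb{Q}(d\xi_1,d\xi_2)\ge-(\epsilon+\eta)\|x\|_q .
$$
Letting $\eta\to0$ and taking the infimum over $\mathbb{P}$ yields $\min_{\mathbb{P}\in\mathcal{U}_{\epsilon}(\widehat{\mathbb{P}}_N)}\mathbb{E}_{\mathbb{P}}(\xi^{\top}x)\ge\mathbb{E}_{\widehat{\mathbb{P}}_N}(\xi^{\top}x)-\epsilon\|x\|_q$.

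For the matching upper bound I will construct a feasible distribution achieving equality. Since $\|x\|_q=\sup_{\|v\|_p\le1}v^{\top}x$ and this supremum is attained in finite dimensions, choose $v\in\mathbb{R}^m$ with $\|v\|_p=1$ and $v^{\top}x=-\|x\|_q$ (any $v$ works when $x=0$). Define the perturbed empirical measure $\mathbb{P}^{\sharp}=\frac{1}{N}\sum_{i=1}^{N}\delta_{\hat\xi_i+\epsilon v}$. The coupling that transports the mass at $\hat\xi_i$ to $\hat\xi_i+\epsilon v$ has cost $\frac{1}{N}\sum_{i=1}^{N}\|\epsilon v\|_p=\epsilon$, so $d_W(\mathbb{P}^{\sharp},\widehat{\mathbb{P}}_N)\le\epsilon$ and $\mathbb{P}^{\sharp}\in\mathcal{U}_{\epsilon}(\widehat{\mathbb{P}}_N)$. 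Evaluating directly,
$$
\mathbb{E}_{\mathbb{P}^{\sharp}}(\xi^{\top}x)=\frac{1}{N}\sum_{i=1}^{N}(\hat\xi_i+\epsilon v)^{\top}x=\mathbb{E}_{\widehat{\mathbb{P}}_N}(\xi^{\top}x)+\epsilon\,v^{\top}x=\mathbb{E}_{\widehat{\mathbb{P}}_N}(\xi^{\top}x)-\epsilon\|x\|_q,
$$
which coincides with the lower bound. Hence the infimum is attained at $\mathbb{P}^{\sharp}$ and equals $\mathbb{E}_{\widehat{\mathbb{P}}_N}(\xi^{\top}x)-\epsilon\|x\|_q$, establishing the claim.

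The only delicate points are bookkeeping ones rather than genuine obstacles: one must take a near-optimal coupling in the lower bound (since the transport infimum defining $d_W$ need not be attained a priori) and argue existence of the extremal vector $v$ realizing the dual norm, which is immediate in the finite-dimensional setting here. I note that the same result can alternatively be reached by Lagrangian duality, dualizing the Wasserstein constraint with a multiplier $\lambda\ge0$ and reducing the inner problem to $\inf_{\delta}(\delta^{\top}x+\lambda\|\delta\|_p)$, which equals $0$ when $\lambda\ge\|x\|_q$ and $-\infty$ otherwise; optimizing over $\lambda$ forces $\lambda=\|x\|_q$ and recovers the same value. The two-sided argument above is preferable because it is elementary and exhibits the worst-case distribution explicitly.
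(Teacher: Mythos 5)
Your proof is correct, but it takes a genuinely different route from the paper. The paper proves the proposition by writing the minimum as $-\max_{\mathbb{P}}\mathbb{E}_{\mathbb{P}}(-\xi^{\top}x)$ and invoking the strong-duality reformulation of Blanchet et al.\ (justified via a Slater condition): the worst-case expectation equals $\min_{\lambda\ge 0}\{\lambda\epsilon+\frac{1}{N}\sum_i\Phi_\lambda(\hat\xi_i)\}$ with $\Phi_\lambda(\hat\xi_i)=\sup_{\Delta}\{-x^{\top}\Delta-\lambda\|\Delta\|_p\}-x^{\top}\hat\xi_i$, which is finite exactly when $\lambda\ge\|x\|_q$, and the outer minimization then yields $\epsilon\|x\|_q-\mathbb{E}_{\widehat{\mathbb{P}}_N}(\xi^{\top}x)$. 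You instead give a primal two-sided argument: H\"older's inequality applied to a near-optimal coupling gives the lower bound $\mathbb{E}_{\mathbb{P}}(\xi^{\top}x)\ge\mathbb{E}_{\widehat{\mathbb{P}}_N}(\xi^{\top}x)-\epsilon\|x\|_q$ uniformly over the ball, and the shifted empirical measure $\mathbb{P}^{\sharp}=\frac{1}{N}\sum_i\delta_{\hat\xi_i+\epsilon v}$ with $v$ realizing the dual norm attains it. Both arguments are sound; yours is more elementary and self-contained (no appeal to the DRO duality theorem or a constraint qualification), and it has the added value of exhibiting the worst-case distribution explicitly and thereby certifying that the infimum is attained, while the paper's route is shorter given the cited machinery and is the same template it reuses for the harder inner maximization in Theorem~\ref{thm1}. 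The only hypotheses you implicitly use --- that $\Xi=\mathbb{R}^m$ so that $\hat\xi_i+\epsilon v$ remains in the support, and that the dual-norm supremum is attained --- hold in the setting of Theorem~\ref{thm2} where this proposition is applied, so there is no gap.
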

	\begin{proof}
		It is easy to see that
		$$
		\min_{\mathbb{P}\in\mathcal{U}_{\epsilon}(\widehat{\mathbb{P}}_N)} \mathbb{E}_{\mathbb{P}}(\xi^{\top}x)=	-\max_{\mathbb{P}\in\mathcal{U}_{\epsilon}(\widehat{\mathbb{P}}_N)} \mathbb{E}_{\mathbb{P}}(-\xi^{\top}x).
		$$
		Combining with the work of Blanchet  et al. \cite{BJ.KY.MK.2016} and Slater condition,
		one has 
		\begin{equation}\label{A1}\tag{A.1}
			\max_{\mathbb{P}\in\mathcal{U}_{\epsilon}(\widehat{\mathbb{P}}_N)}\ \mathbb{E}_{\mathbb{P}}(-\xi^{\top}x)=\min_{\lambda\geq0}\left\lbrace \lambda\epsilon+\frac{1}{N}\sum_{i=1}^{N}\Phi_{\lambda}(\hat{\xi}_i)\right\rbrace,
		\end{equation}
		where 
		$$
		\begin{aligned}
			\Phi_{\lambda}(\hat{\xi}_i)&=\sup_{u}\left\lbrace -x^{\top}u-\lambda\|u-\hat{\xi}_i\|_p\right\rbrace \\
			&=\sup_{\Delta}\left\lbrace (-x^{\top})(\Delta+\hat{\xi}_i)-\lambda\|\Delta\|_p\right\rbrace \\
			&=\sup_{\Delta}\left\lbrace \|x^{\top}\|_q\|\Delta\|_p-\lambda\|\Delta\|_p\right\rbrace -x^{\top}\hat{\xi}_i\\
			&=	\left\{
			\begin{aligned}
				&-x^{\top}\hat{\xi}_i, \ {\rm{if}}\ \|x\|_q\leq \lambda,\\
				&+\infty,\ {\rm{otherwise}}.
			\end{aligned}
			\right.
		\end{aligned}
		$$
		Thus, \eqref{A1} becomes 
		$$
		\max_{\mathbb{P}\in\mathcal{U}_{\epsilon}(\widehat{\mathbb{P}}_N)}\ \mathbb{E}_{\mathbb{P}}(-\xi^{\top}x)=\min_{\|x\|_q\leq \lambda}\left\lbrace \lambda\epsilon-\frac{1}{N}\sum_{i=1}^{N}(\hat{\xi}_i)^{\top}x\right\rbrace=\epsilon\|x\|_q-\mathbb{E}_{\widehat{\mathbb{P}}_N}(\xi^{\top}x).
		$$
		Then we have 
		$$
		\min_{\mathbb{P}\in\mathcal{U}_{\epsilon}(\widehat{\mathbb{P}}_N)} \mathbb{E}_{\mathbb{P}}(\xi^{\top}x)=\mathbb{E}_{\widehat{\mathbb{P}}_N}(\xi^{\top}x)-\epsilon\|x\|_q.
		$$
	\end{proof}
	As a result, the feasible region $ \mathcal{F}_{\epsilon,\rho}$ admits the following form:
	$$
	\mathcal{F}_{\epsilon,\rho} =\{x \mid e^{\top}x=1,x\geq0,\mathbb{E}_{\widehat{\mathbb{P}}_N}(\xi^{\top}x)-\epsilon \|x\|_q\geq \rho\}.
	$$
	Clearly, $\mathcal{F}_{\epsilon,\rho} $ is a convex set.
	
	Secondly, we consider the inner maximization part of problem \eqref{EDRMLSAD}:
	\begin{equation}\label{inner}\tag{A.2}
		\begin{aligned}
			&\max_{\mathbb{P}}\ \mathbb{E}_{\mathbb{P}}\left[ \max(0,\alpha-\xi^{\top}x)\right] \\
			&\ {\rm{s.t.}}\quad \mathbb{E}_{\mathbb{P}}(\xi^{\top}x)=\alpha,  \mathbb{P}\in\mathcal{U}_{\epsilon}(\widehat{\mathbb{P}}_N). 
		\end{aligned}
	\end{equation}
	The problem \eqref{inner} is an infinite-dimensional non-convex problem. Following the work in \cite{EPM.KD.2018}, we shall prove that problem \eqref{inner} can be reformulated as a finite-dimensional conic linear programming problem.
	\begin{theorem}\label{thm1}
		Let the uncertainty set $\Xi=\mathbb{R}^m$. Then the optimal value of problem \eqref{inner} is equal to the optimal value of the following convex programming problem:
		\begin{equation}\label{conic}\tag{A.3}
			\begin{aligned}
				&\min_{ \gamma_1,\gamma_2}\ \gamma_1(\alpha-\hat{\mu}^{\top}x)+\gamma_2\epsilon+\frac{1}{N}\sum_{i=1}^{N}\max(0,\alpha-\hat{\xi}_i^{\top}x) \\
				&\quad\ \ {\rm{s.t.}}\quad \|\gamma_1x\|_{q}\leq\gamma_2,\\
				&\quad\ \ \quad \quad\  \|\gamma_1x+x\|_{q}\leq \gamma_2,
			\end{aligned}
		\end{equation}
		where $\hat{\mu}=\frac{1}{N}\sum_{i=1}^{N}\hat{\xi}_i$.
	\end{theorem}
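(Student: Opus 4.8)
The plan is to follow the primal--dual route pioneered in \cite{EPM.KD.2018}: first lift the worst-case expectation in \eqref{inner} into an infinite-dimensional linear program over measures, then dualize and collapse the inner maximization to a pointwise supremum that can be evaluated in closed form. Since $\widehat{\mathbb{P}}_N=\frac1N\sum_{i=1}^N\delta_{\hat{\xi}_i}$ is discrete, every $\mathbb{P}$ with $d_W(\mathbb{P},\widehat{\mathbb{P}}_N)\le\epsilon$ can be disintegrated through its transport plan as $\mathbb{P}=\frac1N\sum_{i=1}^N\mathbb{Q}_i$, where each $\mathbb{Q}_i\in\mathcal{M}(\Xi)$ is a probability measure and the transport cost equals $\frac1N\sum_{i=1}^N\int_\Xi\|\xi-\hat{\xi}_i\|_p\,\mathbb{Q}_i(d\xi)$. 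I would therefore rewrite \eqref{inner} equivalently as
\[
\max_{\mathbb{Q}_1,\dots,\mathbb{Q}_N}\ \frac1N\sum_{i=1}^N\int_\Xi\max(0,\alpha-\xi^{\top}x)\,\mathbb{Q}_i(d\xi),
\]
subject to the Wasserstein budget $\frac1N\sum_i\int\|\xi-\hat{\xi}_i\|_p\,\mathbb{Q}_i(d\xi)\le\epsilon$, the return equality $\frac1N\sum_i\int\xi^{\top}x\,\mathbb{Q}_i(d\xi)=\alpha$, and each $\mathbb{Q}_i$ a probability measure on $\Xi=\mathbb{R}^m$.

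Next I would attach a multiplier $\gamma_2\ge0$ to the Wasserstein inequality and a free multiplier $\gamma_1$ to the return equality and form the Lagrangian. Because each $\mathbb{Q}_i$ carries unit mass, maximizing over $\mathbb{Q}_i$ concentrates all mass at the maximizer of the integrand, so the dual function reduces to the separable pointwise supremum
\[
\gamma_2\epsilon+\gamma_1\alpha+\frac1N\sum_{i=1}^N\sup_{\xi\in\mathbb{R}^m}\Big[\max(0,\alpha-\xi^{\top}x)-\gamma_1\xi^{\top}x-\gamma_2\|\xi-\hat{\xi}_i\|_p\Big].
\]
I would then evaluate each inner supremum by the change of variables $\xi=\hat{\xi}_i+\Delta$ and the identity $\max(0,a)-g=\max(-g,\,a-g)$, which splits the problem into two affine-minus-norm maximizations. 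Each piece is bounded only when the corresponding conjugate (dual-norm) condition holds with $1/p+1/q=1$: the ``$0$'' branch is finite iff $\|\gamma_1 x\|_q\le\gamma_2$, and the ``$\alpha-\xi^{\top}x$'' branch iff $\|\gamma_1 x+x\|_q\le\gamma_2$. Imposing both conditions (otherwise the supremum is $+\infty$), the attained value at $\Delta=0$ is $-\gamma_1\hat{\xi}_i^{\top}x+\max(0,\alpha-\hat{\xi}_i^{\top}x)$; summing and using $\hat{\mu}=\frac1N\sum_i\hat{\xi}_i$ reproduces exactly the objective of \eqref{conic}, while the two conjugate conditions give its two constraints (note that $\|\gamma_1 x\|_q\le\gamma_2$ already forces $\gamma_2\ge0$, so that sign restriction can be dropped).

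The main obstacle is justifying that the weak duality just constructed is in fact \emph{strong} duality, i.e., that no gap occurs between \eqref{inner} and \eqref{conic}. I would handle this as in \cite{EPM.KD.2018}: the lifted problem is a moment-type linear program over probability measures with a lower-semicontinuous integrand and, under Assumption \ref{Assum1}, a nonempty feasible set, so the Lagrangian duality theorem for conic/semi-infinite programs applies. The one wrinkle relative to the plain Wasserstein DRO reformulation is the \emph{equality} constraint $\mathbb{E}_{\mathbb{P}}(\xi^{\top}x)=\alpha$, for which I would verify a Slater-type constraint qualification: there must exist a feasible $\mathbb{P}$ strictly inside the Wasserstein ball whose return can be perturbed to both sides of $\alpha$, so that this equality sits in the relative interior of the constraint map. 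By Proposition \ref{fesi} the attainable returns over the ball fill the interval centered at $\mathbb{E}_{\widehat{\mathbb{P}}_N}(\xi^{\top}x)$ of half-width $\epsilon\|x\|_q$, and for the relevant range of $\alpha$ this interior point exists, so strong duality is not destroyed. Once it is secured, the closed-form evaluation above identifies the optimal value of \eqref{inner} with that of \eqref{conic}, completing the proof.
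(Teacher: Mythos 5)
Your proposal is correct and follows essentially the same route as the paper: disintegrate $\mathbb{P}$ into conditional measures $\mathbb{Q}_i$ attached to the atoms of $\widehat{\mathbb{P}}_N$, dualize the Wasserstein budget and the return equality with multipliers $\gamma_2\geq 0$ and $\gamma_1$, split the pointwise supremum into two affine-minus-norm branches whose finiteness yields the two dual-norm constraints, and evaluate at $\Delta=0$ to recover the objective of \eqref{conic}. If anything, your explicit verification (via Proposition \ref{fesi}) that the return equality admits a Slater-type interior point is slightly more careful than the paper's appeal to strict feasibility of $\mathbb{Q}_0=\widehat{\mathbb{P}}_N\times\widehat{\mathbb{P}}_N$, which addresses only the Wasserstein inequality.
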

	
	\begin{proof}
		By the definition of Wasserstein ambiguity set, we can reformulate Problem \eqref{inner} in the following form:
		\begin{equation}\label{3.1}\tag{A.4}
			\left\{
			\begin{aligned}
				&\max_{\mathbb{P}\in\mathcal{U}_{\epsilon}(\mathbb{P}_N)}\int_{\Xi}\max(0,\alpha-\xi^{\top}x)\ \mathbb{P}(d\xi) \\
				\ &\quad {\rm{s.t.}}\quad \int_{\Xi}\xi^{\top}x\ \mathbb{P}(d\xi)=\alpha.
			\end{aligned}
			\right.=\left\{
			\begin{aligned}
				&\max_{\mathbb{Q}\in\mathcal{M}(\Xi)}\int_{\Xi}\max(0,\alpha-\xi^{\top}x)\ \mathbb{Q}(d\xi,\Xi)\\
				\ &\quad {\rm{s.t.}}\quad \int_{\Xi}\xi^{\top}x\ \mathbb{Q}(d\xi,\Xi)=\alpha,\\
				&	\quad\quad\quad \int_{\Xi^2} \|\xi-\xi'\|_p\mathbb{Q}(d\xi,d\xi')\leq \epsilon,\\
				&	\quad\quad\quad  \mathbb{Q}(\Xi,d\xi')=\mathbb{P}_N(d\xi'),\ i = 1,\dots,N.
			\end{aligned}
			\right.	
		\end{equation}
		Since $\mathbb{P}_N$ is  discrete, we have 
		\begin{equation}\label{3.2}\tag{A.5}
			\mathbb{Q}(d\xi,\Xi) = \int_{\xi'\in\Xi}\mathbb{Q}(d\xi,d\xi')=\sum_{i=1}^{N}\mathbb{Q}(d\xi|\xi'=\hat{\xi}_i)\mathbb{P}_N(\hat{\xi}_i)=\frac{1}{N}\sum_{i=1}^{N}\mathbb{P}^i(d\xi),
		\end{equation}  
		where $\mathbb{P}^i(d\xi)$ denotes the conditional probability distribution of $\xi$ given $\xi'=\hat{\xi}_i$. Similarly, it holds
		\begin{equation}\label{3.3}\tag{A.6}
			\mathbb{Q}(d\xi,d\xi')=\mathbb{Q}(d\xi|\xi')\mathbb{P}_N(\xi')=\frac{1}{N}\sum_{i=1}^{N}\delta_{\hat{\xi}_i}(\xi')\mathbb{P}^i(d\xi).
		\end{equation}
		Invoking \eqref{3.2} and \eqref{3.3}, problem \eqref{3.1}  can be equivalently written as follows:
		\begin{equation}\label{3.4}\tag{A.7}
			\left\{
			\begin{aligned}
				&\max_{\mathbb{P}^i\in\mathcal{M}(\Xi)}\frac{1}{N}\sum_{i=1}^{N}\int_{\Xi}\max(0,\alpha-\xi^{\top}x)\mathbb{P}^i(d\xi)\\
				\ &\quad {\rm{s.t.}}\quad \frac{1}{N}\sum_{i=1}^{N}\int_{\Xi}\xi^{\top}x \mathbb{P}^i(d\xi)=\alpha,\\
				&	\quad\quad\quad \frac{1}{N}\sum_{i=1}^{N}\int_{\Xi} \|\xi-\hat{\xi}_i\|_p\mathbb{P}^i(d\xi)\leq \epsilon,\\
				&	\quad\quad\quad  \int_{\Xi}\mathbb{P}^i(d\xi)=1,\ i = 1,\dots,N.
			\end{aligned}
			\right.
		\end{equation}
		Then the Lagrangian  function admits the following minimization form:
		\begin{equation*}
			\begin{aligned}
				L(\xi;s,\gamma_1,\gamma_2)& =  \frac{1}{N}\sum_{i=1}^{N}\int_{\Xi}\max(0,\alpha-\xi^{\top}x) \mathbb{P}^i(d\xi)+\gamma_1(\alpha-\frac{1}{N}\sum_{i=1}^{N}\int_{\Xi}\xi^{\top}x \mathbb{P}^i(d\xi))\\
				&\ +\gamma_2(\epsilon-\frac{1}{N}\sum_{i=1}^{N}\int_{\Xi} \|\xi-\hat{\xi}_i\|_p\mathbb{P}^i(d\xi))+\sum_{i=1}^{N}s_i(1- \int_{\Xi}\mathbb{P}^i(d\xi))\\
				&=\frac{1}{N}\sum_{i=1}^{N}\int_{\Xi}\left(  \max(0,\alpha-\xi^{\top}x)-\gamma_1\xi^{\top}x-\gamma_2\|\xi-\hat{\xi}_i\|_p-Ns_i\right)  \mathbb{P}^i(d\xi)\\
				&\quad +\gamma_1\alpha+\gamma_2\epsilon+\sum_{i=1}^{N}s_i,
			\end{aligned}
		\end{equation*}
		where $\gamma_1\in\mathbb{R},\gamma_2\in\mathbb{R}_{+},s\in\mathbb{R}^N.$ Thus,  the Lagrangian dual of problem \eqref{3.4} is 
		$$
		\min_{\gamma_1,\gamma_2,s}\max_{\xi\in\Xi}\ L(\xi;s,\gamma_1,\gamma_2),
		$$
		which is given by
		\begin{equation}\label{dual}\tag{A.8}
			\begin{aligned}
				&\min_{\gamma_1,\gamma_2,s}\ \gamma_1\alpha+\gamma_2\epsilon+\sum_{i=1}^{N}s_i\\
				\ &\quad {\rm{s.t.}}\quad \max(0,\alpha-\xi^{\top}x)-\gamma_1\xi^{\top}x-\gamma_2\|\xi-\hat{\xi}_i\|_p-Ns_i\leq 0,\ \forall \xi\in\Xi,\ i = 1,\dots,N,\\
				\ &\quad \quad\quad \ \gamma_2\geq 0.
			\end{aligned}
		\end{equation}
		For any $\epsilon>0$, it is not difficult to observe that the $\mathbb{Q}_0=\mathbb{P}_N\times\mathbb{P}_N$ is a strictly feasible solution of problem \eqref{3.1}. Therefore,  the strong dual result holds due to the Slater condition of problem \eqref{3.1} being satisfied \cite{AS.2001}. On the other hand, if $\epsilon=0$, then the duality gap between problem \eqref{3.1} and problem \eqref{dual} is also equal to $0$. In fact, for $\epsilon=0$, problem \eqref{3.1} reduces to the SAA problem \eqref{saa} with an optimal objective function value of $\frac{1}{N}\sum_{i=1}^{N}\max(0,\alpha-\hat{\xi}_i^{\top}x),$ and $\gamma_2$ in \eqref{dual} can be increased to infinity without penalty in the objective function. Following \cite{EPM.KD.2018,WZ.YK.SS.ZY.2020}, one can conclude that as $\lambda$ tends to infinity, the problem \eqref{dual} converges to the sample average $\frac{1}{N}\sum_{i=1}^{N}\max(0,\alpha-\hat{\xi}_i^{\top}x)$. As a result, the optimal value of problem\eqref{3.1} and \eqref{dual} are equal. Moreover, we reformulate \eqref{dual} as 
		\begin{equation*}
			\begin{aligned}
				&\left\{
				\begin{aligned}
					&\min_{\gamma_1,\gamma_2,s}\ \gamma_1\alpha+\gamma_2\epsilon+\sum_{i=1}^{N}s_i\\
					\ &\quad {\rm{s.t.}}\quad
					Ns_i\geq\max_{\xi\in\Xi}(-\gamma_2\|\xi-\hat{\xi}_i\|_p-\gamma_1\xi^{\top}x),\ i = 1,\dots,N,\\
					\ &\quad \quad\quad \ Ns_i\geq\max_{\xi\in\Xi}(-\gamma_2\|\xi-\hat{\xi}_i\|_p-\gamma_1\xi^{\top}x-\xi^{\top}x+\alpha),\ i = 1,\dots,N,\\
					\ &\quad \quad\quad \ \gamma_2\geq0.
				\end{aligned}
				\right.\\
				&=\left\{
				\begin{aligned}
					&\min_{\gamma_1,\gamma_2,s}\ \gamma_1\alpha+\gamma_2\epsilon+\sum_{i=1}^{N}s_i\\
					\ &\quad {\rm{s.t.}}\quad
					Ns_i\geq\max_{\xi\in\Xi}\min_{\|z_{i1}\|_{q}\leq\gamma_2}(-z_{i1}^{\top}(\xi-\hat{\xi}_i)-\gamma_1\xi^{\top}x),\ i = 1,\dots,N,\\
					\ &\quad \quad\quad \ Ns_i\geq\max_{\xi\in\Xi}\min_{\|z_{i2}\|_{q}\leq\gamma_2}(-z_{i2}^{\top}(\xi-\hat{\xi}_i)-\gamma_1\xi^{\top}x-\xi^{\top}x+\alpha),\ i = 1,\dots,N,\\
					\ &\quad \quad\quad \ \gamma_2\geq0.
				\end{aligned}
				\right.\\
				&=\left\{
				\begin{aligned}
					&\min_{\gamma_1,\gamma_2,s}\ \gamma_1\alpha+\gamma_2\epsilon+\sum_{i=1}^{N}s_i\\
					\ &\quad {\rm{s.t.}}\quad
					Ns_i\geq\max_{\xi\in\Xi}(-z_{i1}^{\top}(\xi-\hat{\xi}_i)-\gamma_1\xi^{\top}x),\ i = 1,\dots,N,\\
					\ &\quad \quad\quad \ Ns_i\geq\max_{\xi\in\Xi}(-z_{i2}^{\top}(\xi-\hat{\xi}_i)-\gamma_1\xi^{\top}x-\xi^{\top}x+\alpha),\ i = 1,\dots,N,\\
					\ &\quad\quad\quad\|z_{i1}\|_{q}\leq\gamma_2,\ \|z_{i2}\|_{q}\leq\gamma_2.
				\end{aligned}
				\right.
			\end{aligned}
		\end{equation*}
		where $\|\cdot\|_{q}$ denotes the dual norm of $\|\cdot\|_p$. Since the set $\{z_{ik}\in\mathbb{R}^N:\|z_{ik}\|_{q}\leq\gamma_2\}$ is compact for any $\gamma_2$, and by virtue of the classical maximin theorem\cite[Proposition 5.5.4]{BDP.2009}, the last equality holds. Furthermore, by eliminating $z_{i1}$ and $z_{i2}$, we obtain
		\begin{equation}\label{3.5}\tag{A.9}
			\left\{
			\begin{aligned}
				&\min_{\gamma_1,\gamma_2,s}\ \gamma_1\alpha+\gamma_2\epsilon+\sum_{i=1}^{N}s_i\\
				\ &\quad {\rm{s.t.}}\quad
				Ns_i\geq-\gamma_1\hat{\xi}_i^{\top}x,\ i = 1,\dots,N,\\
				\ &\quad \quad\quad \ Ns_i\geq(-x-\gamma_1x)^{\top}\hat{\xi}_i+\alpha,\ i = 1,\dots,N,\\
				\ &\quad\quad\quad\|\gamma_1x\|_{q}\leq\gamma_2,\ \|x+\gamma_1x\|_{q}\leq\gamma_2.
			\end{aligned}
			\right.
		\end{equation}
		The first and second constraints in \eqref{3.5} can be equivalent written as 
		$$
		s_i\geq\frac{1}{N}\left\lbrace \max(\alpha-\hat{\xi}_i^{\top}x,0)-\gamma_1x^{\top}\hat{\xi}_i\right\rbrace.
		$$
		Thus, problem \eqref{3.5} is equivalent to 
		\begin{equation*}
			\begin{aligned}
				&\min_{ \gamma_1,\gamma_2}\ \gamma_1(\alpha-\hat{\mu}^{\top}x)+\gamma_2\epsilon+\frac{1}{N}\sum_{i=1}^{N}\max(0,\alpha-\hat{\xi}_i^{\top}x) \\
				&\quad\ \ {\rm{s.t.}}\quad \|\gamma_1x\|_{q}\leq\gamma_2,\\
				&\quad\ \ \quad \quad\  \|\gamma_1x+x\|_{q}\leq \gamma_2,
			\end{aligned}
		\end{equation*}
		where $\hat{\mu}=\frac{1}{N}\sum_{i=1}^{N}\hat{\xi}_i.$ Here, we complete the proof.  
	\end{proof}
	Since the strong duality holds, the duality gap between problem \eqref{conic} and its dual problem is $0$. Therefore, we can obtain the optimal value of problem \eqref{conic} by solving its dual problem. Then we put its optimal value into model \eqref{EDRMLSAD} and simplify it further. Finally, we can obtain a finitely convex optimization problem equivalent to the DR-MLSAD model. The specific details are presented in the following theorem.
	Now, we are ready to prove Theorem \ref{thm2}.
	
	
	\noindent{ \large \bf{Proof of Theorem \ref{thm2}.}}
	Problem \eqref{conic} can be transformed into the  form below:
	\begin{equation}\label{3.6}\tag{A.10}
		\begin{aligned}
			&\min_{ \gamma_1,\gamma_2,w_1,w_2}\ \gamma_1(\alpha-\hat{\mu}^{\top}x)+\gamma_2\epsilon+\frac{1}{N}\sum_{i=1}^{N}\max(0,\alpha-\hat{\xi}_i^{\top}x) \\
			&\quad {\rm{s.t.}}\quad \|w_1\|_{q}\leq\gamma_2,\ \|w_2\|_{q}\leq \gamma_2\\
			&\quad\quad \quad\ w_1=\gamma_1x,\ w_2=\gamma_1x+x.\\
		\end{aligned}
	\end{equation}
	The corresponding Lagrangian function takes the following form:
	$$
	\begin{aligned}
		L(\gamma_1,\gamma_2,w_1,w_2;\varpi,\upsilon)&=\gamma_1(\alpha-\hat{\mu}^{\top}x)+\gamma_2\epsilon+\frac{1}{N}\sum_{i=1}^{N}\max(0,\alpha-\hat{\xi}_i^{\top}x)+\varpi_1(\|w_1\|_{q}-\gamma_2)\\
		&+\varpi_2(\|w_2\|_{q}-\gamma_2)+\upsilon_1(\gamma_1x-w_1)+\upsilon_2(\gamma_1x+x-w_2)\\
		&=\frac{1}{N}\sum_{i=1}^{N}\max(0,\alpha-\hat{\xi}_i^{\top}x)+\gamma_1(\alpha-\hat{\mu}^{\top}x+(\upsilon_1+\upsilon_2)^{\top}x)+\upsilon_2^{\top}x\\
		&+(\varpi_1\|w_1\|_{q}-\upsilon_1^{\top}w_1)+(\varpi_2\|w_2\|_{q}-\upsilon_2^{\top}w_2)+\gamma_2(\epsilon-\varpi_1-\varpi_2)
	\end{aligned}
	$$
	where $\varpi_1,\varpi_2\in\mathbb{R}_{+}$ and $\upsilon_1,\upsilon_2\in\mathbb{R}^m$ are Lagrange multipliers. Then the dual of \eqref{3.6} admits the following maximization form:
	\begin{equation}\label{dual2}\tag{A.11}
		\begin{aligned}
			&\max_{\varpi_1,\varpi_2,\upsilon_1,\upsilon_2}\ \frac{1}{N}\sum_{i=1}^{N}\max(0,\alpha-\hat{\xi}_i^{\top}x)+\upsilon_2^{\top}x\\
			&\quad {\rm{s.t.}}\quad  \alpha-\hat{\mu}^{\top}x+(\upsilon_1+\upsilon_2)^{\top}x=0,\\
			&\quad \quad \quad \varpi_1+\varpi_2\leq\epsilon,\\
			&\quad \quad \quad \|\upsilon_1\|_p\leq\varpi_1,\ \|\upsilon_2\|_p\leq\varpi_2.
		\end{aligned}
	\end{equation}
	It is not difficult to find that $\gamma_1=0,\gamma_2=2,w_1=0,w_2=x$ is a strictly feasible solution to the problem \eqref{3.6}. Therefore, the Slater condition is satisfied, that is, the strong duality holds.
	Considering the norm $\|\cdot\|_p=\|\cdot\|_{\infty}$, we reformulate the problem \eqref{dual2} as
	\begin{equation}\label{3.7}\tag{A.12}
		\begin{aligned}
			&\max_{\varpi_1,\varpi_2,\upsilon_1,\upsilon_2}\ \frac{1}{N}\sum_{i=1}^{N}\max(0,\alpha-\hat{\xi}_i^{\top}x)+\upsilon_2^{\top}x\\
			&\quad {\rm{s.t.}}\quad  \alpha-\hat{\mu}^{\top}x+(\upsilon_1+\upsilon_2)^{\top}x=0,\ \varpi_1+\varpi_2\leq\epsilon,\\
			&\quad \quad \quad -\varpi_1e\leq\upsilon_1\leq\varpi_1e,\\ 
			&\quad\quad\quad\ -\varpi_2e\leq\upsilon_2\leq\varpi_2e.
		\end{aligned}
	\end{equation}
	Since the portfolio weights $x$ satisfy $e^{\top}x=1$ and $x\geq 0$, by the second and fourth constraints in \eqref{3.7}, we have
	$$
	\upsilon_2^{\top}x\leq \varpi_1e^{\top}x\leq\epsilon,
	$$
	which indicates that the optimal value of the problem \eqref{3.7} does not exceed $ \frac{1}{N}\sum_{i=1}^{N}\max(0,\alpha-\hat{\xi}_i^{\top}x)+\epsilon$.
	Thus, as long as the problem \eqref{3.7} is feasible, there will be an optimal solution so that the maximum value of problem \eqref{3.7} is $ \frac{1}{N}\sum_{i=1}^{N}\max(0,\alpha-\hat{\xi}_i^{\top}x)+\epsilon$. since $-\epsilon\leq(\upsilon_1+\upsilon_2)^{\top}x\leq (\varpi_1+\varpi_2)\leq\epsilon,$ we have $
	-\epsilon\leq \hat{\mu}^{\top}x-\alpha\leq\epsilon.$  Therefore, if $\hat{\mu}^{\top}x-\alpha\in\left[ -\epsilon,\epsilon\right]$, the problem \eqref{3.7} is feasible. Indeed, it is not difficult to see that $\varpi_1^*=0,\varpi_2^*=\epsilon,\upsilon_1^*=0,\upsilon_2^*=\varpi_2^*e$ is the unique optimal solution of the problem. In this case, the constraint $\hat{\mu}^{\top}x-\alpha=\epsilon$ holds. Then, put the above results into problem \eqref{EDRMLSAD}, DR-MLSAD model can be formulated as follows
	\begin{equation}\label{maxmax}\tag{A.13}
		\begin{aligned}
			&\min_{x\in	\mathcal{F}_{\epsilon,\rho}}\max_{\alpha}\ \frac{1}{N}\sum_{i=1}^{N}\max(\alpha-\hat{\xi}_i^{\top}x,0)+\epsilon\\
			&\quad {\rm{s.t.}}\quad \hat{\mu}^{\top}x-\alpha=\epsilon,\alpha\geq\rho.
		\end{aligned}
	\end{equation}  
	By eliminating $\alpha$, problem \eqref{maxmax} reduces to   
	$$
	\begin{aligned}
		&\left\{
		\begin{aligned}
			&\min_{x\in	\mathcal{F}_{\epsilon,\rho}}\ \frac{1}{N}\sum_{i=1}^{N}\max(\hat{\mu}^{\top}x-\hat{\xi}_i^{\top}x-\epsilon,0)+\epsilon\\
			&\quad {\rm{s.t.}}\quad \hat{\mu}^{\top}x-\epsilon\geq\rho.
		\end{aligned}
		\right.\\
		&=\left\{\begin{aligned}
			&\min_{x\in	\mathcal{F}_{\epsilon,\rho}}\ \frac{1}{2N}\sum_{i=1}^{N}|(\hat{\mu}-\hat{\xi}_i)^{\top}x-\epsilon|+\frac{1}{2N}\sum_{i=1}^{N}((\hat{\mu}-\hat{\xi}_i)^{\top}x-\epsilon)+\epsilon\\
			\ & \quad \quad  {\rm{s.t.}} \ \hat{\mu}^{\top}x\geq \rho+\epsilon,
		\end{aligned}\right.
	\end{aligned}
	$$  
	Since $	\mathcal{F}_{\epsilon,\rho} =\{x | e^{\top}x=1,x\geq0,\mathbb{E}_{\widehat{\mathbb{P}}_N}(\xi^{\top}x)-\epsilon \|x\|_q\geq \rho\}$ and $\|\cdot\|_q=\|\cdot\|_1$, we have 
	$
	\mathcal{F}_{\epsilon,\rho} =\{x | e^{\top}x=1,x\geq0,\hat{\mu}^{\top}x-\epsilon \geq \rho\}.
	$
	Therefore, problem \eqref{EDRMLSAD} is equivalent to
	$$
	\begin{aligned}
		&\min_{x\in	\mathcal{X}}\ \frac{1}{2N}\sum_{i=1}^{N}|(\hat{\mu}-\hat{\xi}_i)^{\top}x-\epsilon|+\frac{1}{2N}\sum_{i=1}^{N}((\hat{\mu}-\hat{\xi}_i)^{\top}x-\epsilon)+\epsilon\\
		\ & \quad \quad  {\rm{s.t.}} \ \hat{\mu}^{\top}x\geq \rho+\epsilon,
	\end{aligned}
	$$    
	This concludes the proof.
	
	\section{Proof of Theorem \ref{thm3.1}}\label{App2}
	Obviously, the optimal solution $x^*$ satisfies the optimality condition $\mathbb{E}_{\mathbb{P}^*}\left[ h(\xi;x^*)\right] =0.$ 
	Since $h(\xi;x^*)  = g(x^*,\xi)-\lambda_1^*\xi-\lambda_2^*e-\lambda_3^*$, we have 
	$$
	\|h(\xi;x^*)\|_2^2\leq \| |g(x^*,\xi)|+|\lambda_1^*\xi|+|\lambda_2^*e|+|\lambda_3^*|\|_2^2,
	$$
	where 
	$$
	g(x^*,\xi):=\frac{-\xi}{1+\exp((\xi^{\top}x^*-\bar{\rho})/t)}. 
	$$
	Thus, $\mathbb{E}_{\mathbb{P}^*} \|h(\xi;x^*)\|_2^2<\infty$, which means $\mathbb{E}_{\mathbb{P}^*} \|h(\xi;x^*)\|_2^2$ is finite.
	while 
	$$
	\begin{aligned}
		D_{\xi}h(\xi,x^*) &=\frac{ x^*\xi^{\top}}{t\left[ 1+\exp((\xi^{\top}x^*-\bar{\rho})/t)\right] \left[ 1+\exp((-\xi^{\top}x^*+\bar{\rho})/t)\right] }\\
		&+ \frac{-I_m}{1+\exp((\xi^{\top}x^*-\bar{\rho})/t)}-\lambda_1^*I_m
	\end{aligned}
	$$
	is continuously differentiable, it also holds that
	$$
	\begin{aligned}
		&\mathbb{P}^*\left( \|\zeta^{\top}	D_{\xi}h(\xi,x^*) \|_1=0\right) \\
		&=\mathbb{P}^*\left( \begin{aligned}
			&t\left[1+\exp((-\xi^{\top}x^*+\bar{\rho})/t)\right] \zeta=(\zeta^{\top}\xi)x^*\\
			&-\lambda_1^*t\left[ 1+\exp((\xi^{\top}x^*-\bar{\rho})/t)\right] \left[ 1+\exp((-\xi^{\top}x^*+\bar{\rho})/t)\right] \zeta
		\end{aligned}\right)=0,
	\end{aligned}
	$$
	which implies $\mathbb{P}^*\left( \|\zeta^{\top}	D_{\xi}h(\xi,x^*) \|_1>0\right)>0.$
	
	Based on the above analysis, all the required assumptions of Theorem 4 in \cite{BJ.KY.MK.2016} are valid, so we can obtain the result in \cite[Theorem 4] {BJ.KY.MK.2016}:
	$$
	\sqrt{N}R_N(x^*)\stackrel{D}{\longrightarrow}\sup_{\zeta\in\bar{\Xi}}\zeta^{\top}H,
	$$
	where 
	$$ 
	H\sim\mathcal{N}\left( {\bf{0}}, Cov\left[h(\xi;x^*) \right] \right), \
	\bar{\mathbb{E}}=\left\lbrace\zeta\in\mathbb{R}^N:\|\zeta^{\top} D_{\xi}h(\xi,x^*)\|_1 \leq 1\right\rbrace.
	$$
	
	In the rest of the proof, we give the upper bound for $\bar{R}(1)$. First, we prove the fact that $\bar{\mathbb{E}}$ is a subset of the norm ball $\left\lbrace \zeta\in\mathbb{R}^N:\|\zeta\|_1\leq1\right\rbrace.$ By the H$\ddot{o}$lder's inequality $|\zeta^{\top}\xi|\leq\|\zeta\|_1\|\xi\|_\infty$, it is clearly observed that
	\begin{equation}\label{Hiden}
		\begin{aligned}
			& \|\zeta^{\top} D_{\xi}h(\xi,x^*)\|_1\\
			&=\left\| \frac{-\zeta^{\top}}{1+\exp((\xi^{\top}x^*-\bar{\rho})/t)}+\frac{(\zeta^{\top}\xi) (x^*)^{\top}}{t\left[ 1+\exp((\xi^{\top}x^*-\bar{\rho})/t)\right] \left[ 1+\exp((-\xi^{\top}x^*+\bar{\rho})/t)\right] }-\lambda_1^*\zeta^{\top}\right\| _1\\
			&\geq \left\| \frac{\zeta}{1+\exp((\xi^{\top}x^*-\bar{\rho})/t)}\right\| _1-\left\| \frac{(\zeta^{\top}\xi) x^*}{t\left[ 1+\exp((\xi^{\top}x^*-\bar{\rho})/t)\right] \left[ 1+\exp((-\xi^{\top}x^*+\bar{\rho})/t)\right] }\right\|_1-|\lambda_1^*|\|\zeta\|_1\\
			& \geq \left( \frac{1}{1+\exp((\xi^{\top}x^*-\bar{\rho})/t)}-\frac{\|\xi\|_\infty\|x^*\|_1}{t\left[ 1+\exp((\xi^{\top}x^*-\bar{\rho})/t)\right] \left[ 1+\exp((-\xi^{\top}x^*+\bar{\rho})/t)\right] }-|\lambda_1^*|\right) \|\zeta\|_1.
		\end{aligned}
	\end{equation}
	By \eqref{Hiden}, we know that if $\zeta\in\mathbb{R}^N$ is such that $\|\zeta\|_1=(1-\epsilon)^{-2}>1$ for a given $\epsilon>0,$ then $\|\zeta^{\top} D_{\xi}h(\xi,x^*)\|_1>1$ wherever
	$$
	\xi\in\widetilde{\Omega}_{\epsilon}:=\left\lbrace \xi:
	\begin{aligned} 
		&\frac{1}{1+\exp((\xi^{\top}x^*-\bar{\rho})/t)}\leq\frac{\epsilon}{2},\\
		& \frac{\|\xi\|_\infty\|x^*\|_1}{t\left[ 1+\exp((\xi^{\top}x^*-\bar{\rho})/t)\right] \left[ 1+\exp((-\xi^{\top}x^*+\bar{\rho})/t)\right] }+|\lambda_1^*| \geq 1-\frac{\epsilon}{2}
	\end{aligned}\right\rbrace 
	$$
	The set $\widetilde{\Omega}_{\epsilon}$ has positive probability for every $\epsilon>0$ because $\zeta$ has positive probability almost everywhere. Thus, we have $\bar{\mathbb{E}}\subset\left\lbrace \zeta\in\mathbb{R}^N:\|\zeta\|_1\leq1\right\rbrace.$
	Therefore,
	$$
	\bar{R}(1)=\max_{\zeta\in\bar{\mathbb{E}}}\left\lbrace\zeta^{\top}H  \right\rbrace\overset{D}{\leq} \max_{\|\zeta\|_1\leq 1}\left\lbrace\zeta^{\top}H  \right\rbrace:=\|\tilde{H}\|_\infty.
	$$
	
	In the remainder, we estimate $\tilde{H}$. Since our purpose is to find the appropriate radius $\epsilon$ in problem \eqref{DRMLSAD}, we need to analyze the case as $t\to0^{+}$.  When $t\to0^{+}$, we obtain
	$$
	\hat{g}(x^*,\xi):=\lim\limits_{t\to0^{+}}g(x^*,\xi)=
	\begin{cases}
		{\bf{0}}, \ &{\rm{if}}\ \xi^{\top}x^*-\bar{\rho}>0,\\
		-\frac{\xi}{2} , \ &{\rm{if}} \ \xi^{\top}x^*-\bar{\rho}=0,\\
		-\xi , \ &{\rm{if}} \ \xi^{\top}x^*-\bar{\rho}<0.
	\end{cases}
	$$
	Then we have 
	$$
	\mathbb{E}_{\mathbb{P}^*}\left[ \hat{g}^i(x^*,\xi)\right] = \mathbb{E}_{\mathbb{P}^*}\left( -\frac{\xi^i}{2}{\rm{1}}_{\left\lbrace \xi^{\top}x^*-\bar{\rho}=0 \right\rbrace} -\xi^i{\rm{1}}_{\left\lbrace \xi^{\top}x^*-\bar{\rho}<0\right\rbrace} \right) =- \mathbb{E}_{\mathbb{P}^*}\left( \xi^i{\rm{1}}_{\left\lbrace \xi^{\top}x^*-\bar{\rho}<0\right\rbrace} \right).
	$$
	Invoking \eqref{lambda1},\eqref{lambda2} and\eqref{lambda3}, we obtain
	$$
	\begin{aligned}
		\hat{\lambda}_1^*&:=\lim\limits_{t\to0^{+}}\lambda^*_1=\lim\limits_{t\to0^{+}} \frac{\mathbb{E}_{\mathbb{P}^*}\left[ g^i(x^*,\xi)\right]- (x^*)^{\top} \mathbb{E}_{\mathbb{P}^*}\left[  g(x^*,\xi)\right]}{\mathbb{E}_{\mathbb{P}^*}(\xi^i)-\bar{\rho}}\\
		&=\frac{-\mathbb{E}_{\mathbb{P}^*}\left( \xi^i{\bf{1}}_{\left\lbrace \xi^{\top}x^*-\bar{\rho}<0\right\rbrace }\right) +(x^*)^{\top}\mathbb{E}_{\mathbb{P}^*}\left( \xi{\bf{1}}_{\left\lbrace \xi^{\top}x^*-\bar{\rho}<0\right\rbrace }\right)}{\mathbb{E}_{\mathbb{P}^*}(\xi^i)-\bar{\rho}},\ i\in \mathcal{I}_*, 
	\end{aligned}
	$$
	and 
	$$
	\begin{aligned}
		&\hat{\lambda}_2^*:=\lim\limits_{t\to0^{+}}\lambda^*_2=\lim\limits_{t\to0^{+}}\left[ (x^*)^{\top} \mathbb{E}_{\mathbb{P}^*}\left[  g(x^*,\xi)\right]  -\lambda_1^*\bar{\rho}\right] =(x^*)^{\top}\mathbb{E}_{\mathbb{P}^*}\left( \xi{\bf{1}}_{\left\lbrace \xi^{\top}x^*-\bar{\rho}<0\right\rbrace }\right)  -\hat{\lambda}_1^*\bar{\rho},\\
		&\hat{\lambda}_3^*:=\lim\limits_{t\to0^{+}}\lambda_3^*=\lim\limits_{t\to0^{+}}\left[ \mathbb{E}_{\mathbb{P}^*}\left[ g(x^*,\xi)\right] -\lambda_1^*\mathbb{E}_{\mathbb{P}^*}(\xi)-\lambda_2^*e\right]\\
		&\quad=\mathbb{E}_{\mathbb{P}^*}\left( \xi{\bf{1}}_{\left\lbrace \xi^{\top}x^*-\bar{\rho}<0\right\rbrace }\right) -\hat{\lambda}_1^*\mathbb{E}_{\mathbb{P}^*}(\xi)-\hat{\lambda}_2^*e.
	\end{aligned}
	$$
	Thus we can get 
	$$
	\hat{h}(\xi;x^*)=\lim\limits_{t\to0^{+}}h(\xi;x^*)=\hat{g}(x^*,\xi)-\hat{\lambda}_1^*\xi-\hat{\lambda}_2^*e-\hat{\lambda}_3^*,
	$$
	and 
	$$
	\begin{aligned}
		\|\hat{h}(\xi;x^*)\|_2^2&\leq\| |\hat{g}(x^*,\xi)|+|\hat{\lambda}_1^*\xi|+|\hat{\lambda}_2^*e|+|\hat{\lambda}_3^*|\|_2^2\\
		&\leq \| |\xi|+|\hat{\lambda}_1^*||\xi|+|\hat{\lambda}_2^*|e+|\hat{\lambda}_3^*|\|_2^2\\
		&=\|\left( 1+|\hat{\lambda}_1^*|\right) |\xi|+|\hat{\lambda}_2^*|e+\hat{\lambda}_3^*\|_2^2,
	\end{aligned}
	$$
	which implies $\mathbb{E}_{\mathbb{P}^*}\|\hat{h}(\xi;x^*)\|_2^2\leq\mathbb{E}_{\mathbb{P}^*}\left[ \|\left( 1+|\hat{\lambda}_1^*|\right) |\xi|+|\hat{\lambda}_2^*|e+\hat{\lambda}_3^*\|_2^2\right] $.
	Let 
	$$
	\tilde{H}\sim\mathcal{N}\left( {\bf{0}}, Cov_{\mathbb{P}^*}\left[(1+|\hat{\lambda}_1^*|)|\xi|+|\hat{\lambda}_2^*|e+\hat{\lambda}_3^* \right] \right),
	$$
	then $Cov\left[ \tilde{H}\right] -Cov\left[ H\right]
	$
	is positive definite. Therefore, $\bar{R}(1)$ is stochastically dominated by $\|\tilde{H}\|_\infty$.
	
	
	
	
\end{appendices}


\bibliography{sn-bibliography}


\begin{thebibliography}{56}
\ifx \bisbn   \undefined \def \bisbn  #1{ISBN #1}\fi
\ifx \binits  \undefined \def \binits#1{#1}\fi
\ifx \bauthor  \undefined \def \bauthor#1{#1}\fi
\ifx \batitle  \undefined \def \batitle#1{#1}\fi
\ifx \bjtitle  \undefined \def \bjtitle#1{#1}\fi
\ifx \bvolume  \undefined \def \bvolume#1{\textbf{#1}}\fi
\ifx \byear  \undefined \def \byear#1{#1}\fi
\ifx \bissue  \undefined \def \bissue#1{#1}\fi
\ifx \bfpage  \undefined \def \bfpage#1{#1}\fi
\ifx \blpage  \undefined \def \blpage #1{#1}\fi
\ifx \burl  \undefined \def \burl#1{\textsf{#1}}\fi
\ifx \doiurl  \undefined \def \doiurl#1{\url{https://doi.org/#1}}\fi
\ifx \betal  \undefined \def \betal{\textit{et al.}}\fi
\ifx \binstitute  \undefined \def \binstitute#1{#1}\fi
\ifx \binstitutionaled  \undefined \def \binstitutionaled#1{#1}\fi
\ifx \bctitle  \undefined \def \bctitle#1{#1}\fi
\ifx \beditor  \undefined \def \beditor#1{#1}\fi
\ifx \bpublisher  \undefined \def \bpublisher#1{#1}\fi
\ifx \bbtitle  \undefined \def \bbtitle#1{#1}\fi
\ifx \bedition  \undefined \def \bedition#1{#1}\fi
\ifx \bseriesno  \undefined \def \bseriesno#1{#1}\fi
\ifx \blocation  \undefined \def \blocation#1{#1}\fi
\ifx \bsertitle  \undefined \def \bsertitle#1{#1}\fi
\ifx \bsnm \undefined \def \bsnm#1{#1}\fi
\ifx \bsuffix \undefined \def \bsuffix#1{#1}\fi
\ifx \bparticle \undefined \def \bparticle#1{#1}\fi
\ifx \barticle \undefined \def \barticle#1{#1}\fi
\bibcommenthead
\ifx \bconfdate \undefined \def \bconfdate #1{#1}\fi
\ifx \botherref \undefined \def \botherref #1{#1}\fi
\ifx \url \undefined \def \url#1{\textsf{#1}}\fi
\ifx \bchapter \undefined \def \bchapter#1{#1}\fi
\ifx \bbook \undefined \def \bbook#1{#1}\fi
\ifx \bcomment \undefined \def \bcomment#1{#1}\fi
\ifx \oauthor \undefined \def \oauthor#1{#1}\fi
\ifx \citeauthoryear \undefined \def \citeauthoryear#1{#1}\fi
\ifx \endbibitem  \undefined \def \endbibitem {}\fi
\ifx \bconflocation  \undefined \def \bconflocation#1{#1}\fi
\ifx \arxivurl  \undefined \def \arxivurl#1{\textsf{#1}}\fi
\csname PreBibitemsHook\endcsname

\bibitem[\protect\citeauthoryear{Markowitz}{1952}]{Mark.1952}
\begin{barticle}
\bauthor{\bsnm{Markowitz}, \binits{H.M.}}:
\batitle{Portfolio selection}.
\bjtitle{J. {F}inance.}
\bvolume{7}(\bissue{1}),
\bfpage{77}
(\byear{1952})
\end{barticle}
\endbibitem

\bibitem[\protect\citeauthoryear{Konno and Yamazaki}{1991}]{Kon.Yama.1991}
\begin{barticle}
\bauthor{\bsnm{Konno}, \binits{H.}},
\bauthor{\bsnm{Yamazaki}, \binits{H.}}:
\batitle{Mean absolute portfolio optimisation model and its application to
  {T}okyo stock market}.
\bjtitle{Manage. {S}ci.}
\bvolume{37}(\bissue{5}),
\bfpage{519}--\blpage{531}
(\byear{1991})
\end{barticle}
\endbibitem

\bibitem[\protect\citeauthoryear{Jarrow and Zhao}{2006}]{JR.ZF.2006}
\begin{barticle}
\bauthor{\bsnm{Jarrow}, \binits{R.}},
\bauthor{\bsnm{Zhao}, \binits{F.}}:
\batitle{Downside loss aversion and portfolio management}.
\bjtitle{Manage. {S}ci.}
\bvolume{52},
\bfpage{558}--\blpage{566}
(\byear{2006})
\end{barticle}
\endbibitem

\bibitem[\protect\citeauthoryear{Speranza}{1993}]{Spera.1993}
\begin{barticle}
\bauthor{\bsnm{Speranza}, \binits{M.G.}}:
\batitle{Linear programming models for portfolio optimization}.
\bjtitle{Finance.}
\bvolume{14},
\bfpage{107}--\blpage{123}
(\byear{1993})
\end{barticle}
\endbibitem

\bibitem[\protect\citeauthoryear{Scarf}{1958}]{Scarf.1958}
\begin{bchapter}
\bauthor{\bsnm{Scarf}, \binits{H.}}:
\bctitle{A min-max solution of an inventory problem}.
In: \bbtitle{Studies in {T}he {M}athematical {T}heory of {I}nventory and
  {P}roduction},
pp. \bfpage{201}--\blpage{209}.
\bpublisher{Stanford University Press},
\blocation{California}
(\byear{1958})
\end{bchapter}
\endbibitem

\bibitem[\protect\citeauthoryear{Blanchet et~al.}{2000}]{BJ.Chen.Zhou.2022}
\begin{barticle}
\bauthor{\bsnm{Blanchet}, \binits{J.H.}},
\bauthor{\bsnm{Chen}, \binits{L.}},
\bauthor{\bsnm{Zhou}, \binits{X.Y.}}:
\batitle{Distributionally robust mean-variance portfolio selection with
  {W}asserstein distances}.
\bjtitle{Manage. {S}ci.}
\bvolume{68},
\bfpage{6382}--\blpage{6410}
(\byear{2000})
\end{barticle}
\endbibitem

\bibitem[\protect\citeauthoryear{Chen et~al.}{2022}]{Chen.Wu.Li.Ding.Chen.2022}
\begin{barticle}
\bauthor{\bsnm{Chen}, \binits{D.L.}},
\bauthor{\bsnm{Wu}, \binits{Y.W.}},
\bauthor{\bsnm{Li}, \binits{J.Q.}},
\bauthor{\bsnm{Ding}, \binits{X.H.}},
\bauthor{\bsnm{Chen}, \binits{C.H.}}:
\batitle{Distributionally robust mean-absolute deviation portfolio optimization
  using {W}asserstein metric}.
\bjtitle{J. {G}lob. {O}ptim.}
\bvolume{16},
\bfpage{1}--\blpage{23}
(\byear{2022})
\end{barticle}
\endbibitem

\bibitem[\protect\citeauthoryear{Liu et~al.}{2021}]{Liu.Yang.Yu.2021}
\begin{barticle}
\bauthor{\bsnm{Liu}, \binits{W.}},
\bauthor{\bsnm{Yang}, \binits{L.}},
\bauthor{\bsnm{Yu}, \binits{B.}}:
\batitle{K{D}{E} distributionally robust portfolio optimization with higher
  moment coherent risk}.
\bjtitle{Ann. {O}per. {R}es.}
\bvolume{307},
\bfpage{363}--\blpage{397}
(\byear{2021})
\end{barticle}
\endbibitem

\bibitem[\protect\citeauthoryear{Liu et~al.}{2019}]{Liu.Chen.A.Xu.2019}
\begin{barticle}
\bauthor{\bsnm{Liu}, \binits{J.}},
\bauthor{\bsnm{Chen}, \binits{Z.P.}},
\bauthor{\bsnm{Lisser}, \binits{A.}},
\bauthor{\bsnm{Xu}, \binits{Z.J.}}:
\batitle{Closed-form optimal portfolios of distributionally robust
  mean-{C}{V}a{R} problems with unknown mean and variance}.
\bjtitle{Appl. {M}ath. {O}pt.}
\bvolume{79}(\bissue{3}),
\bfpage{671}--\blpage{693}
(\byear{2019})
\end{barticle}
\endbibitem

\bibitem[\protect\citeauthoryear{Rahimian and Mehrotra}{2019}]{RH.MS.2019}
\begin{botherref}
\oauthor{\bsnm{Rahimian}, \binits{H.}},
\oauthor{\bsnm{Mehrotra}, \binits{S.}}:
Distributionally robust optimization: a review.
Preprint at \url{arXiv:1908.05659}
(2019)
\end{botherref}
\endbibitem

\bibitem[\protect\citeauthoryear{Mohajerin~Esfahani and
  Kuhn}{2018}]{EPM.KD.2018}
\begin{barticle}
\bauthor{\bsnm{Mohajerin~Esfahani}, \binits{P.}},
\bauthor{\bsnm{Kuhn}, \binits{D.}}:
\batitle{Data-driven distributionally robust optimization using the
  {W}asserstein metric: performance guarantees and tractable reformulations}.
\bjtitle{J. {F}inance.}
\bvolume{171},
\bfpage{115}--\blpage{166}
(\byear{2018})
\end{barticle}
\endbibitem

\bibitem[\protect\citeauthoryear{Shafieezadeh-Abadeh
  et~al.}{2015}]{SAS.EPM.KD.2015}
\begin{botherref}
\oauthor{\bsnm{Shafieezadeh-Abadeh}, \binits{S.}},
\oauthor{\bsnm{Mohajerin~Esfahani}, \binits{P.}},
\oauthor{\bsnm{Kuhn}, \binits{D.}}:
Distributionally robust logistic regression.
Paper presented at Advances in Neural Information Processing Systems,
  1576–1584
(2015)
\end{botherref}
\endbibitem

\bibitem[\protect\citeauthoryear{Lee and Mehrotra}{2015}]{LC.MS.2015}
\begin{bchapter}
\bauthor{\bsnm{Lee}, \binits{C.}},
\bauthor{\bsnm{Mehrotra}, \binits{S.}}:
\bctitle{A distributionally-robust approach for finding support vector
  machine}.
(\byear{2015}).
\burl{https://api.semanticscholar.org/CorpusID:18746498}
\end{bchapter}
\endbibitem

\bibitem[\protect\citeauthoryear{Shafieezadeh-Abadeh
  et~al.}{2019}]{SAS.KD.EPM.2019}
\begin{barticle}
\bauthor{\bsnm{Shafieezadeh-Abadeh}, \binits{S.}},
\bauthor{\bsnm{Kuhn}, \binits{D.}},
\bauthor{\bsnm{Mohajerin~Esfahani}, \binits{P.}}:
\batitle{Regularization via mass transportation}.
\bjtitle{J. {M}ach. {L}earn. {R}es.}
\bvolume{20}(\bissue{103}),
\bfpage{1}--\blpage{68}
(\byear{2019})
\end{barticle}
\endbibitem

\bibitem[\protect\citeauthoryear{Shao}{1993}]{SJ.1993}
\begin{barticle}
\bauthor{\bsnm{Shao}, \binits{J.}}:
\batitle{Linear model selection by cross-validation}.
\bjtitle{J. {A}m. {S}tat. {A}ssoc.}
\bvolume{88}(\bissue{422}),
\bfpage{486}--\blpage{494}
(\byear{1993})
\end{barticle}
\endbibitem

\bibitem[\protect\citeauthoryear{Blanchet et~al.}{2019}]{BJ.KY.MK.2016}
\begin{barticle}
\bauthor{\bsnm{Blanchet}, \binits{J.}},
\bauthor{\bsnm{Kang}, \binits{Y.}},
\bauthor{\bsnm{Murthy}, \binits{K.}}:
\batitle{Robust {W}asserstein profile inference and applications to machine
  learning}.
\bjtitle{J. {A}ppl. {P}robab.}
\bvolume{56}(\bissue{3}),
\bfpage{830}--\blpage{857}
(\byear{2019})
\end{barticle}
\endbibitem

\bibitem[\protect\citeauthoryear{Cisneros-Velarde
  et~al.}{2020}]{CVP.PA.OSY.2020}
\begin{botherref}
\oauthor{\bsnm{Cisneros-Velarde}, \binits{P.}},
\oauthor{\bsnm{Petersen}, \binits{A.}},
\oauthor{\bsnm{Oh}, \binits{S.-Y.}}:
Distributionally robust formulation and model selection for the graphical
  lasso.
Paper presented at International Conference on Artificial Intelligence and
  Statistics, 756–765
(2020)
\end{botherref}
\endbibitem

\bibitem[\protect\citeauthoryear{Hai and Nam}{2023}]{HX.NK.2023}
\begin{botherref}
\oauthor{\bsnm{Hai}, \binits{X.}},
\oauthor{\bsnm{Nam}, \binits{K.}}:
Robust {W}asserstein optimization and its application in mean-{C}{V}a{R}.
Preprint at \url{arXiv:2306.15524}
(2023)
\end{botherref}
\endbibitem

\bibitem[\protect\citeauthoryear{Peng and Lin}{1999}]{JMP.ZL.1999}
\begin{barticle}
\bauthor{\bsnm{Peng}, \binits{J.-M.}},
\bauthor{\bsnm{Lin}, \binits{Z.}}:
\batitle{A non-interior continuation method for generalized linear
  complementarity problems}.
\bjtitle{Math. {P}rogram.}
\bvolume{86}(\bissue{3}),
\bfpage{533}--\blpage{563}
(\byear{1999})
\end{barticle}
\endbibitem

\bibitem[\protect\citeauthoryear{Li et~al.}{2020a}]{XL.DE.KCT.2020}
\begin{barticle}
\bauthor{\bsnm{Li}, \binits{X.D.}},
\bauthor{\bsnm{Sun}, \binits{D.F.}},
\bauthor{\bsnm{Toh}, \binits{K.C.}}:
\batitle{An asymptotically superlinearly convergent semismooth {N}ewton
  augmented {L}agrangian method for linear programming}.
\bjtitle{SIAM {J}. {O}ptim.}
\bvolume{30}(\bissue{3}),
\bfpage{2410}--\blpage{2440}
(\byear{2020})
\end{barticle}
\endbibitem

\bibitem[\protect\citeauthoryear{Li et~al.}{2020b}]{LXD.SDF.2020}
\begin{barticle}
\bauthor{\bsnm{Li}, \binits{X.D.}},
\bauthor{\bsnm{Sun}, \binits{D.F.}},
\bauthor{\bsnm{Toh}, \binits{K.C.}}:
\batitle{On the efficient computation of a generalized {J}acobian of the
  projector over the {B}irkhoff polytope}.
\bjtitle{Math. {P}rogram.}
\bvolume{179},
\bfpage{419}--\blpage{446}
(\byear{2020})
\end{barticle}
\endbibitem

\bibitem[\protect\citeauthoryear{Li et~al.}{2018}]{LST.2018}
\begin{barticle}
\bauthor{\bsnm{Li}, \binits{X.D.}},
\bauthor{\bsnm{Sun}, \binits{D.F.}},
\bauthor{\bsnm{Toh}, \binits{K.C.}}:
\batitle{A highly efficient semismooth {N}ewton augmented {L}agrangian method
  for solving {L}asso problems}.
\bjtitle{SIAM {J}. {O}ptim.}
\bvolume{28}(\bissue{1}),
\bfpage{433}--\blpage{458}
(\byear{2018})
\end{barticle}
\endbibitem

\bibitem[\protect\citeauthoryear{Lin et~al.}{2019}]{ML.DS.KCT.2019}
\begin{botherref}
\oauthor{\bsnm{Lin}, \binits{M.X.}},
\oauthor{\bsnm{Sun}, \binits{D.F.}},
\oauthor{\bsnm{Toh}, \binits{K.C.}},
\oauthor{\bsnm{Yuan}, \binits{Y.C.}}:
A dual Newton based preconditioned proximal point algorithm for exclusive lasso
  models.
Preprint at \url{arXiv:1902.00151}
(2019)
\end{botherref}
\endbibitem

\bibitem[\protect\citeauthoryear{Fang et~al.}{2021}]{FLX.2021}
\begin{barticle}
\bauthor{\bsnm{Fang}, \binits{S.}},
\bauthor{\bsnm{Liu}, \binits{Y.J.}},
\bauthor{\bsnm{Xiong}, \binits{X.Z.}}:
\batitle{Efficient sparse {H}essian-based semismooth {N}ewton algorithms for
  {D}antzig selector}.
\bjtitle{SIAM {J}. {S}ci. {C}omput.}
\bvolume{43}(\bissue{6}),
\bfpage{4147}--\blpage{4171}
(\byear{2021})
\end{barticle}
\endbibitem

\bibitem[\protect\citeauthoryear{Moreau}{1965}]{MJJ.1965}
\begin{barticle}
\bauthor{\bsnm{Moreau}, \binits{J.J.}}:
\batitle{Proximité et dualité dans un espace hilbertien}.
\bjtitle{Bull. {S}oc. {M}ath. {F}rance.}
\bvolume{93},
\bfpage{273}--\blpage{299}
(\byear{1965})
\end{barticle}
\endbibitem

\bibitem[\protect\citeauthoryear{Lemaréchal and
  Sagastizábal}{1997}]{LC.SC.1997}
\begin{barticle}
\bauthor{\bsnm{Lemaréchal}, \binits{C.}},
\bauthor{\bsnm{Sagastizábal}, \binits{C.}}:
\batitle{Practical aspects of the {M}oreau-{Y}osida regularization: theoretical
  preliminaries}.
\bjtitle{SIAM {J}. {O}ptim.}
\bvolume{7}(\bissue{2}),
\bfpage{367}--\blpage{385}
(\byear{1997})
\end{barticle}
\endbibitem

\bibitem[\protect\citeauthoryear{Kantorovich and
  Rubinshtein}{1958}]{KL.RG.1958}
\begin{barticle}
\bauthor{\bsnm{Kantorovich}, \binits{L.V.}},
\bauthor{\bsnm{Rubinshtein}, \binits{G.S.}}:
\batitle{On a space of totally additive functions}.
\bjtitle{Vestn. {L}eningr. {U}niv.}
\bvolume{13}(\bissue{7}),
\bfpage{52}--\blpage{59}
(\byear{1958})
\end{barticle}
\endbibitem

\bibitem[\protect\citeauthoryear{Ambrosio and Gigli}{2013}]{AL.GN.2013}
\begin{bchapter}
\bauthor{\bsnm{Ambrosio}, \binits{L.}},
\bauthor{\bsnm{Gigli}, \binits{N.}}:
\bctitle{A user’s guide to optimal transport}.
In: \bbtitle{Modelling and {O}ptimisation of {F}ows on {N}etworks},
pp. \bfpage{1}--\blpage{155}.
\bpublisher{Springer},
\blocation{Berlin}
(\byear{2013})
\end{bchapter}
\endbibitem

\bibitem[\protect\citeauthoryear{Wang et~al.}{2020}]{WZ.YK.SS.ZY.2020}
\begin{barticle}
\bauthor{\bsnm{Wang}, \binits{Z.L.}},
\bauthor{\bsnm{You}, \binits{K.Y.}},
\bauthor{\bsnm{Song}, \binits{S.J.}},
\bauthor{\bsnm{Zhang}, \binits{Y.L.}}:
\batitle{Wasserstein distributionally robust shortest path problem}.
\bjtitle{Eur. {J}. {O}per. {R}es.}
\bvolume{284}(\bissue{1}),
\bfpage{31}--\blpage{43}
(\byear{2020})
\end{barticle}
\endbibitem

\bibitem[\protect\citeauthoryear{Li}{1991}]{LXS.1991}
\begin{barticle}
\bauthor{\bsnm{Li}, \binits{X.S.}}:
\batitle{An aggregate function method for nonlinear programming}.
\bjtitle{{S}ci. {C}hina {S}er. {A}}
\bvolume{34},
\bfpage{1467}--\blpage{1473}
(\byear{1991})
\end{barticle}
\endbibitem

\bibitem[\protect\citeauthoryear{Blanchet et~al.}{2021}]{BJ.MK.ANV.2021}
\begin{botherref}
\oauthor{\bsnm{Blanchet}, \binits{J.}},
\oauthor{\bsnm{Murthy}, \binits{K.}},
\oauthor{\bsnm{Nguyen}, \binits{V.A.}}:
Statistical analysis of wasserstein distributionally robust estimators.
I{N}{F}{O}{R}{M}{S} {T}ut{O}{R}ials in {O}perations {R}esearch null(null),
227--254
(2021)
\end{botherref}
\endbibitem

\bibitem[\protect\citeauthoryear{Zhou and So}{2017}]{ZZ.AMS.2017}
\begin{barticle}
\bauthor{\bsnm{Zhou}, \binits{Z.R.}},
\bauthor{\bsnm{So}, \binits{A.M.C.}}:
\batitle{A unified approach to error bounds for structured convex optimization
  problems}.
\bjtitle{Math. {P}rogram.}
\bvolume{165}(\bissue{2}),
\bfpage{689}--\blpage{728}
(\byear{2017})
\end{barticle}
\endbibitem

\bibitem[\protect\citeauthoryear{Rockafellar}{1976}]{RRT.1976}
\begin{barticle}
\bauthor{\bsnm{Rockafellar}, \binits{R.T.}}:
\batitle{Monotone operators and the proximal point algorithm}.
\bjtitle{SIAM {J}. {C}ontrol {O}ptim.}
\bvolume{14}(\bissue{5}),
\bfpage{877}--\blpage{898}
(\byear{1976})
\end{barticle}
\endbibitem

\bibitem[\protect\citeauthoryear{Liu and Zhou}{2023}]{YJL.WMZ.2023}
\begin{botherref}
\oauthor{\bsnm{Liu}, \binits{Y.J.}},
\oauthor{\bsnm{Zhou}, \binits{W.M.}}:
Fast projection onto the intersection of simplex and singly linear constraint
  and its generalized {J}acobian.
Preprint at \url{arXiv:2310.10388}
(2023)
\end{botherref}
\endbibitem

\bibitem[\protect\citeauthoryear{Rockafellar}{1976}]{RTR.1976b}
\begin{barticle}
\bauthor{\bsnm{Rockafellar}, \binits{R.T.}}:
\batitle{Augmented lagrangians and applications of the proximal point algorithm
  in convex programming}.
\bjtitle{Math. {O}per. {R}es.}
\bvolume{1}(\bissue{2}),
\bfpage{97}--\blpage{116}
(\byear{1976})
\end{barticle}
\endbibitem

\bibitem[\protect\citeauthoryear{Rockafellar and Wets}{1998}]{RRT.WRJB.1998}
\begin{bbook}
\bauthor{\bsnm{Rockafellar}, \binits{R.T.}},
\bauthor{\bsnm{Wets}, \binits{R.J.-B.}}:
\bbtitle{Variational {A}nalysis.}
\bpublisher{Springer},
\blocation{Berlin}
(\byear{1998})
\end{bbook}
\endbibitem

\bibitem[\protect\citeauthoryear{Robinson}{1981}]{RSM.1981}
\begin{barticle}
\bauthor{\bsnm{Robinson}, \binits{S.M.}}:
\batitle{Some continuity properties of polyhedral multifunctions}.
\bjtitle{Math. Program. Stud.}
\bvolume{1}(\bissue{4}),
\bfpage{206}--\blpage{214}
(\byear{1981})
\end{barticle}
\endbibitem

\bibitem[\protect\citeauthoryear{Kummer}{1988}]{BK.1988}
\begin{barticle}
\bauthor{\bsnm{Kummer}, \binits{B.}}:
\batitle{Newton’s method for non-differentiable functions}.
\bjtitle{Adv. {M}ath. {O}ptim.}
\bvolume{45},
\bfpage{114}--\blpage{125}
(\byear{1988})
\end{barticle}
\endbibitem

\bibitem[\protect\citeauthoryear{Qi and Sun}{1993}]{LQ.JS.1993}
\begin{barticle}
\bauthor{\bsnm{Qi}, \binits{L.Q.}},
\bauthor{\bsnm{Sun}, \binits{J.}}:
\batitle{A nonsmooth version of {N}ewton’s method}.
\bjtitle{Math. {P}rogram.}
\bvolume{58}(\bissue{1}),
\bfpage{353}--\blpage{367}
(\byear{1993})
\end{barticle}
\endbibitem

\bibitem[\protect\citeauthoryear{Sun and Sun}{2002}]{DS.JS.2002}
\begin{barticle}
\bauthor{\bsnm{Sun}, \binits{D.F.}},
\bauthor{\bsnm{Sun}, \binits{J.}}:
\batitle{Semismooth matrix-valued functions}.
\bjtitle{Math. {O}per. {R}es.}
\bvolume{27}(\bissue{1}),
\bfpage{150}--\blpage{169}
(\byear{2002})
\end{barticle}
\endbibitem

\bibitem[\protect\citeauthoryear{Clarke}{1983}]{FHC.1983}
\begin{bbook}
\bauthor{\bsnm{Clarke}, \binits{F.H.}}:
\bbtitle{Optimization and {N}onsmooth {A}nalysis}.
\bpublisher{John Wiley and Sons},
\blocation{New York}
(\byear{1983})
\end{bbook}
\endbibitem

\bibitem[\protect\citeauthoryear{Han and Sun}{1997}]{HJY.SDF.1997}
\begin{barticle}
\bauthor{\bsnm{Han}, \binits{J.Y.}},
\bauthor{\bsnm{Sun}, \binits{D.F.}}:
\batitle{Newton and quasi-{N}ewton methods for normal maps with polyhedral
  sets}.
\bjtitle{J. {O}ptim. {T}heory {A}ppl.}
\bvolume{94},
\bfpage{659}--\blpage{676}
(\byear{1997})
\end{barticle}
\endbibitem

\bibitem[\protect\citeauthoryear{Bonnans and Shapiro}{2000}]{BJF.SA}
\begin{bbook}
\bauthor{\bsnm{Bonnans}, \binits{J.F.}},
\bauthor{\bsnm{Shapiro}, \binits{A.}}:
\bbtitle{Perturbation {A}nalysis of {O}ptimization {P}roblems}.
\bpublisher{Springer},
\blocation{New York}
(\byear{2000})
\end{bbook}
\endbibitem

\bibitem[\protect\citeauthoryear{Jiang et~al.}{2014}]{JKF.SDF.TKC.2012}
\begin{barticle}
\bauthor{\bsnm{Jiang}, \binits{K.F.}},
\bauthor{\bsnm{Sun}, \binits{D.F.}},
\bauthor{\bsnm{Toh}, \binits{K.C.}}:
\batitle{A partial proximal point algorithm for nuclear norm regularized matrix
  least squares problems}.
\bjtitle{Math. {P}rog. {C}omp.}
\bvolume{6},
\bfpage{281}--\blpage{325}
(\byear{2014})
\end{barticle}
\endbibitem

\bibitem[\protect\citeauthoryear{Zhao et~al.}{2010}]{XYZ.DFS.KCT.2010}
\begin{barticle}
\bauthor{\bsnm{Zhao}, \binits{X.Y.}},
\bauthor{\bsnm{Sun}, \binits{D.F.}},
\bauthor{\bsnm{Toh}, \binits{K.C.}}:
\batitle{A newton-{C}{G} augmented {L}agrangian method for semidefinite
  programming}.
\bjtitle{SIAM {J}. {O}ptim.}
\bvolume{20}(\bissue{4}),
\bfpage{1737}--\blpage{1765}
(\byear{2010})
\end{barticle}
\endbibitem

\bibitem[\protect\citeauthoryear{DeMiguel et~al.}{2009}]{DV.GL.UR.2009}
\begin{barticle}
\bauthor{\bsnm{DeMiguel}, \binits{V.}},
\bauthor{\bsnm{Garlappi}, \binits{L.}},
\bauthor{\bsnm{Uppal}, \binits{R.}}:
\batitle{Optimal versus naive diversification: {H}ow inefficient is the {1/N}
  portfolio strategy}.
\bjtitle{Rev. {F}inanc. {S}tud.}
\bvolume{22}(\bissue{5}),
\bfpage{1915}--\blpage{1953}
(\byear{2009})
\end{barticle}
\endbibitem

\bibitem[\protect\citeauthoryear{Brodie et~al.}{2009}]{BJ.DI.DMC.2009}
\begin{barticle}
\bauthor{\bsnm{Brodie}, \binits{J.}},
\bauthor{\bsnm{Daubechies}, \binits{I.}},
\bauthor{\bsnm{De~Mol}, \binits{C.}},
\bauthor{\bsnm{Giannone}, \binits{D.}},
\bauthor{\bsnm{Loris}, \binits{I.}}:
\batitle{Sparse and stable {M}arkowitz portfolios}.
\bjtitle{Proc. {N}atl. {A}cad. {S}ci. {U}{S}{A}}
\bvolume{106},
\bfpage{12267}--\blpage{12272}
(\byear{2009})
\end{barticle}
\endbibitem

\bibitem[\protect\citeauthoryear{DeMiguel et~al.}{2009}]{DV.GL.UR.2009b}
\begin{barticle}
\bauthor{\bsnm{DeMiguel}, \binits{V.}},
\bauthor{\bsnm{Garlappi}, \binits{L.}},
\bauthor{\bsnm{Nogales}, \binits{F.J.}},
\bauthor{\bsnm{Uppal}, \binits{R.}}:
\batitle{A generalized approach to portofio optimization: improving performance
  by constraining portfolio norms}.
\bjtitle{Manage. {S}ci.}
\bvolume{55},
\bfpage{798}--\blpage{812}
(\byear{2009})
\end{barticle}
\endbibitem

\bibitem[\protect\citeauthoryear{Sharpe}{1998}]{SWF.1998}
\begin{bchapter}
\bauthor{\bsnm{Sharpe}, \binits{W.F.}}:
\bctitle{The sharpe ratio}.
In: \bbtitle{Streetwise: {T}he {B}est of {T}he {J}ournal of {P}ortfolio
  {M}anagement},
pp. \bfpage{169}--\blpage{178}.
\bpublisher{Princeton University Press},
\blocation{Princeton}
(\byear{1998})
\end{bchapter}
\endbibitem

\bibitem[\protect\citeauthoryear{Takeda et~al.}{2013}]{AT.MN.JG.YK.2013}
\begin{barticle}
\bauthor{\bsnm{Takeda}, \binits{A.}},
\bauthor{\bsnm{Niranjan}, \binits{M.}},
\bauthor{\bsnm{Gotoh}, \binits{J.}},
\bauthor{\bsnm{Kawahara}, \binits{Y.}}:
\batitle{Simultaneous pursuit of out-of-sample performance and sparsity in
  index tracking portfolios}.
\bjtitle{Manage. {S}ci.}
\bvolume{10},
\bfpage{21}--\blpage{49}
(\byear{2013})
\end{barticle}
\endbibitem

\bibitem[\protect\citeauthoryear{Rockafellar and Uryasev}{2002}]{RRT.US.2002}
\begin{barticle}
\bauthor{\bsnm{Rockafellar}, \binits{R.T.}},
\bauthor{\bsnm{Uryasev}, \binits{S.}}:
\batitle{Conditional value-at-risk for general loss distributions}.
\bjtitle{J. {B}ank. {F}inanc.}
\bvolume{26}(\bissue{7}),
\bfpage{1443}--\blpage{1471}
(\byear{2002})
\end{barticle}
\endbibitem

\bibitem[\protect\citeauthoryear{Chu et~al.}{2022}]{HTMChu.KCTho.YJZhang.2022}
\begin{barticle}
\bauthor{\bsnm{Chu}, \binits{H.T.M.}},
\bauthor{\bsnm{Toh}, \binits{K.C.}},
\bauthor{\bsnm{Zhang}, \binits{Y.J.}}:
\batitle{On regularized square-root regression problems: distributionally
  robust interpretation and fast computations}.
\bjtitle{J. {M}ach. {L}earn. {R}es.}
\bvolume{23}(\bissue{308}),
\bfpage{1}--\blpage{39}
(\byear{2022})
\end{barticle}
\endbibitem

\bibitem[\protect\citeauthoryear{Gabay and Mercier}{1976}]{DG.BM.1976}
\begin{barticle}
\bauthor{\bsnm{Gabay}, \binits{D.}},
\bauthor{\bsnm{Mercier}, \binits{B.}}:
\batitle{A dual algorithm for the solution of nonlinear variational problems
  via finite element approximation}.
\bjtitle{Comput. {M}ath. {A}ppl.}
\bvolume{2}(\bissue{1}),
\bfpage{17}--\blpage{40}
(\byear{1976})
\end{barticle}
\endbibitem

\bibitem[\protect\citeauthoryear{{Gurobi {O}ptimization, LLC.}}{2023}]{GUROBI}
\begin{botherref}
\oauthor{\bsnm{{Gurobi {O}ptimization, LLC.}}}:
Gurobi {O}ptimizer {R}eference {M}anual.
\url{http://www.gurobi.com}
(2023)
\end{botherref}
\endbibitem

\bibitem[\protect\citeauthoryear{Shapiro}{2001}]{AS.2001}
\begin{bchapter}
\bauthor{\bsnm{Shapiro}, \binits{A.}}:
\bctitle{On duality theory of conic linear problems}.
In: \bbtitle{Semi-infinite {P}rogramming},
pp. \bfpage{135}--\blpage{165}.
\bpublisher{Springer},
\blocation{Boston}
(\byear{2001})
\end{bchapter}
\endbibitem

\bibitem[\protect\citeauthoryear{Bertsekas}{2009}]{BDP.2009}
\begin{bbook}
\bauthor{\bsnm{Bertsekas}, \binits{D.P.}}:
\bbtitle{Convex {O}ptimization {T}heory}.
\bpublisher{Athena Scientific},
\blocation{Belmont}
(\byear{2009})
\end{bbook}
\endbibitem

\end{thebibliography}

\end{document}